\documentclass[a4paper, 12pt, oneside, reqno, notitlepage]{amsart}
\usepackage{amsmath,amssymb,amsthm,graphicx,mathrsfs,bbm,url}
\usepackage[margin=3cm]{geometry}
\usepackage{amsthm}
\usepackage{enumitem}
\usepackage{mathtools}
\usepackage[usenames,dvipsnames]{color}
\usepackage[colorlinks=true,linkcolor=teal,citecolor=cyan]{hyperref}
\usepackage[super]{nth}
\usepackage[open, openlevel=2, depth=3, atend]{bookmark}
\hypersetup{pdfstartview=XYZ}
\usepackage[font=footnotesize]{caption}
\usepackage[colorinlistoftodos]{todonotes}
\usepackage{mathabx}
\usepackage{tikz-cd}
\tikzcdset{row sep/normal=3.6em,column sep/normal=3.6em}

\usepackage{subcaption}

\RequirePackage[style=alphabetic,giveninits=true,maxnames=50]{biblatex} 
\usepackage{subcaption}

\usepackage{epstopdf}

\theoremstyle{plain}
\newtheorem{theorem}{Theorem}[section]
\newtheorem{lemma}[theorem]{Lemma}

\newtheorem{corollary}[theorem]{Corollary}

\theoremstyle{definition}
\newtheorem{definition}[theorem]{Definition}

\theoremstyle{remark}
\newtheorem{remark}[theorem]{Remark}

\newcommand{\dom}{\mathrm{dom}}

\title
[Spectral submanifolds for nonlinear PDEs] 
{Spectral submanifold reduction for PDEs describing nonlinear continuum vibrations}

\author{Gergely Buza}
\address{Institute for Mechanical Systems, ETH Zürich, Leonhardstrasse 21, 8092 Zurich, Switzerland}
\email{buzag@ethz.ch}

\author{George Haller}
\address{Institute for Mechanical Systems, ETH Zürich, Leonhardstrasse 21, 8092 Zurich, Switzerland}
\email{georgehaller@ethz.ch}

\begin{document}

\begin{abstract}
We prove the existence of spectral submanifolds in nonlinear partial differential equations (PDEs) describing forced-damped continuum vibrations. Our results cover structural vibrations subject to generalized structural damping. Based on these results, backbone curves and forced response curves can be rigorously extracted from PDEs, without a priori discretization. On two examples involving an elastic beam and a thin plate, we show that backbone and forced response curves can even be calculated by hand directly from the PDEs.
\end{abstract}

\maketitle


\section{Introduction}

Invariant manifolds serve a dual purpose in nonlinear vibration problems.
For one, they provide practically useful reduced-order models (ROMs) that suffer less from the dimensionality requirements characteristic of their popular projection-based counterparts borne out of linear theory.
Beyond this, invariant manifolds serve a physically more profound purpose: they isolate physical phenomena pertinent only to a select few vibration modes in such a way that keeps the nonlinear structure of the equations intact.
This isolation of modal dynamics in nonlinear systems is facilitated by \textit{spectral submanifolds} (SSMs), which are invariant manifolds tangent to modal subspaces at equilibria \cite{haller2025modeling}, and are the content of the present work.
SSMs have enabled classical notions from the vibrations literature, such as the backbone curve, to be given a more refined theoretical interpretation, which has helped identify the mechanism underlying the alteration of resonant frequencies caused by nonlinear effects \cite{breunung2018explicit}.

However, through the course of its development, the theory has had a notable gap: with the exception of \cite{kogelbauer2018rigorous}, it has remained generally unapplicable to nonlinear continuum vibrations described by partial differential equations (PDEs).
Such PDEs generate infinite-dimensional dynamical systems that are driven by unbounded operators on account of the spatial derivatives.
Due to the ensuing technical difficulties, SSM reduction has only been carried out with mathematical rigor in finite-dimensional discretizations of the underlying PDEs.
This leaves open the following question: Can the existence of SSM-reduced models be maintained in the continuum limit, i.e., do SSMs exist in the infinite-dimensional system describing the PDE?
Or, phrased with the application in mind: Can one rigorously reduce the PDEs of nonlinear structural dynamics to very low-dimensional ODEs defined on SSMs?
We answer this question positively in the present work for a large class of PDEs describing nonlinear continuum vibrations.

As ROMs, invariant manifolds have by now achieved a prominent status
in the engineering mechanics literature. 
Their permeation into the field is commonly attributed to the seminal work of Shaw and Pierre \cite{shaw1993normal} on nonlinear normal modes, who also extended their original approach, at least to the extent of formal calculations, to PDEs describing continuum systems \cite{shaw1994normal}.
In the subsequent two decades, the field evolved largely through practical considerations concerning nonlinear normal modes \cite{touze2006nonlinear,kerschen2009nonlinear,peeters2009nonlinear}; see also \cite{mikhlin2024nonlinear} for a recent review.
Alternative approaches have also emerged since then, including the quadratic manifold approach \cite{jain2017quadratic,rutzmoser2017generalization}.

A solid theoretical foundation to these approaches came through the work of Haller and Ponsioen \cite{haller2016nonlinear,haller2017exact} on SSMs, utilizing the theoretical results of \cite{Cabre2003a,cabre2003parameterization2,cabre2005parameterization3,haro2016parameterization} and \cite{sternberg1957local,sternberg1958structure}.
SSMs are defined as nonlinear analogues of spectral subspaces/subbundles, with the overarching objective of subdividing the dynamics according to asymptotic properties of trajectories in the vicinity of a stationary state or invariant set.
The original theory of SSMs has undergone significant extensions since then both in the practical and theoretical realms, motivated predominantly by contemporary problems of increasing physical complexity; for a recent review, see \cite{haller2025modeling}.
Most pertinent to the present setting are the early works on the von Kármán beam \cite{jain2018exact} and backbone curves \cite{breunung2018explicit}; the computational guidelines proposed for finite element models \cite{jain2022compute}; the treatment of internal resonances \cite{li2022nonlinear,li2022nonlinear2};
the beam-buckling example which motivated the introduction of mixed-mode SSMs \cite{haller2023nonlinear};  the recent treatment of nonsmooth problems \cite{morsy2025data}; and the streamlined process of selecting optimal initial conditions on SSMs via oblique projections \cite{bettini2025data} that are best suited to capture the backbone curve.
An automated algorithm (SSMTool) for computing SSMs for general, finite-element models was developed by \cite{jain2022ssmtool}. 
An alternative implementation for systems with linear damping and cubic nonlinearities was given by \cite{touze2021model}.
Another algorithm (SSMLearn) has been developed by \cite{cenedese2022data} to extract SSMs purely from data.

There exist only a few studies concerned with the rigorous extraction of SSMs directly from an infinite-dimensional dynamical system.
A continuum beam model was the subject of the first such approach \cite{kogelbauer2018rigorous}, incompressible fluid flows were considered in \cite{buza2024spectral} (with data-driven counterparts in \cite{kaszas2022dynamics,kaszas2024capturing}), and delay differential equations were considered in \cite{szaksz2025spectral,buza2025existence} (with data-driven counterparts in \cite{abbasciano2026data,abbasciano2026data2}).
In \cite{kogelbauer2018rigorous}, the authors successfully apply the parameterization method \cite{Cabre2003a} directly to the PDE describing the beam; however, in order to make the linearized operator invertible (as is necessary to apply \cite{Cabre2003a} to the slow subspace), they have to go to great lengths to constrain the spectrum to be bounded in real part, which moreover ends up at odds with experimentally observed damping ratios (see specifically \cite{banks1991damping}).
Here, we remedy this situation by resorting to a simpler invariant manifold theory originally due to Irwin \cite{irwin1980new}, which relies on weaker assumptions, but in turn loses the uniqueness conclusion of \cite{Cabre2003a} for a smoothest SSM. 
This is, however, of no concern in the increasingly important data-driven construction of SSMs where the most influential (rather than the smoothest) one is inferred from the available data.

An important practical objective of invariant manifolds in vibrations is to reduce the computational complexity required to obtain \textit{forced response curves} (FRCs).
FRCs provide the main guiding principle in the design and monitoring of mechanical structures subject to external periodic forcing.
FRCs depict some measure of response amplitude as a function of forcing frequency for a given forcing amplitude. 
These curves will generally develop peaks, which, in the limit of low amplitude forcings, occur at the natural frequencies of the linear system.
As the forcing amplitude is increased, the peaks shift away either to increasing frequencies (hardening behavior) or to decreasing frequencies (softening behavior) depending on the specific character of the nonlinearity.
The graphs that the evolution of these peaks trace in the forcing frequency -- response amplitude plane are usually termed \textit{backbone curves}.
As shown by \cite{breunung2018explicit} rigorously, these forced backbone curves can be closely approximated by the damped backbone curves, which are simply instantaneous amplitude-frequency plots of decaying vibrations within SSMs of the unforced limit of the mechanical system. We will use our present results to extract such backbone curves from nonlinear continuum vibrations directly.

\subsection{Class of partial differential equations considered}
We consider evolution equations of the form
\begin{equation}
    \ddot{u} + B\dot{u} + Au = f(u,\dot{u}) + \varepsilon h(t)
    \label{eq:second_order_intro}
\end{equation}
on a Hilbert space $X$ (most commonly the function space $L^2$).
Here $u$ is an element of the space $X$, which usually stands for the displacement vector field over the physical coordinates evolving in time.
The operators $A$ and $B$ stand for the elastic and damping operators, respectively, whereas $f$ and $h$ stand for the nonlinearity and external forcing.
We assume $B$ to be as such that the origin is stable according to the linear, unperturbed ($\varepsilon=0$) dynamics.
A more precise characterization of these terms will be given in Section~\ref{sect:setup}.

We assume generalized structural damping in the sense of \cite{chen1989proof}, who let $A^\alpha \lesssim B \lesssim A^\alpha$ for some $\alpha \in [\frac12,1]$ (see \eqref{eq:ABA_assumption}). 
As \cite{chen1989proof} shows, this assumption implies the analiticity of the semigroup generated by the linear part of \eqref{eq:second_order_intro}, a very desirable property for our purposes. 
This notion of damping is far more permissive than what structural damping classically stands for (which is $B = A^\frac12$, \cite{chen1982mathematical}), as illuminated in, e.g., Figure~\ref{fig:1}.
Many of the  damping models proposed throughout the years (e.g., \cite{russell1992mathematical,russell1993general}) require this level of generality.

The first example that we detail herein is an extended version of the beam model of \cite{formica2013coupling}, (originally due to \cite{mettler1962dynamic}),
\begin{displaymath}
              \frac{\partial^2 w}{\partial t^2} - 2 \mu \frac{\partial^3 w}{\partial x^2 \partial t}
+ \frac{\partial^4 w}{\partial x^4}
= \left(  \frac{a}{2 } \int_0^1 \left( \frac{\partial w}{\partial x} \right)^2 dx \right)
\frac{\partial^2 w}{\partial x^2} + \varepsilon h(t,x),
\end{displaymath}
with parameters $\mu,\varepsilon>0, \;a \in \mathbb{R}$, external forcing $h$  and vertical displacement $w$.
Our second example is a Kirchhoff-Love plate model,
\begin{displaymath}
    \ddot{w} - 2\mu \Delta \dot{w} + \Delta^2 w = \kappa w^3,
\end{displaymath}
posed on a rectangular domain, with $\kappa \in \mathbb{R}$.
We consider both models with simply supported boundary conditions.
While this makes the hand calculations performed in Sections~\ref{sect:beam}-\ref{sect:plate} significantly easier, our approach also extends to more general boundary conditions.
Several other beam models that fit our assumptions are explored in Section~\ref{sect:beam}.

\subsection{Contributions and organization of the paper}

The following list of points outlines the contributions of this paper.
\begin{enumerate}[label =(C.\arabic*)] 
    \item \label{C1} We show the existence of slow manifolds tangent to the spectral subspace corresponding to the least damped modes. 
    We do so by an application of \cite[Theorem~1.1]{chen1997invariant}, which also provides a complementary foliation that describes the synchronized descent of trajectories towards the slow manifold and the precise rate at which they converge to it.
    The theory is then improved to encompass periodically forced systems and permit arbitrary spectral subsets through a sequence of remarks.
    \item \label{C2} An application of the results of \ref{C1} overcomes the limitations -- with regards to damping models -- faced in \cite{kogelbauer2018rigorous}. 
    These results also bridge the gap in the theory of reducibility, which enables the transfer of our physical understanding outlined at end of the introduction from the discretized model to the PDE.
    \item \label{C3} We  generalize and make systematic the computational procedure outlined in \cite{kogelbauer2018rigorous} (based on the parameterization method \cite{haro2006parameterization}) by adapting the bi-orthogonal system described in \cite[Appendix~A]{chen1989proof}.
    We provide examples in which forced response and backbone curves are calculated analytically and rigorously directly from the PDE. 
\end{enumerate}

The rest of the paper is organized as follows.
The main theoretical body concerning point \ref{C1} is Section~\ref{sect:setup}.
This section is fairly technical, but only the introductory subsection (Section~\ref{sect:actual_setup}) describing the setup and the statement of Theorem~\ref{thm:main} (alongside its assumptions \ref{A1}-\ref{A4}) are necessary for the subsequent sections; the reader uninterested in the precise technical details is invited to skip the rest.
The systematic approach mentioned in \ref{C3} is detailed in Section~\ref{sect:comp}.
Examples for beam (Section~\ref{sect:beam}) and plate (Section~\ref{sect:plate}) models are described thereafter.
Conclusions are drawn in Section~\ref{sect:conclusions}.
Appendices \ref{appendix:domA12} and \ref{appendix:domLB} are technical ones that aid the identification of the exact function spaces at play in Sections~\ref{sect:beam}-\ref{sect:plate}; the latter also helps clarify assumption \ref{A3x}. 
The remaining two appendices, \ref{appendix:biorthogonal} and \ref{appendix:biorthogonal_plate}, merely collect some quantities used during the course of explicit calculations in Sections~\ref{sect:beam}-\ref{sect:plate}.

\section{Theoretical setup and results}
\label{sect:setup}

\subsection{Setup}
\label{sect:actual_setup}

To describe damped nonlinear continuum vibrations,
we adopt the mathematical setup of \cite{chen1989proof}.
For this, we make the following set of assumptions.

\begin{enumerate}[label =(A.\arabic*)] 
    \item \label{A1} \textbf{(Undamped system)} 
    Let $X$ denote a Hilbert space with inner product $\langle \cdot , \cdot \rangle_X$.
    Let $A : \dom(A) \to X$ be a \textit{strictly} positive self-adjoint operator with dense domain $\dom(A) \subset X$ and compact resolvent.
    
    \item \label{A2} \textbf{(Damping)}  Let $B : \dom(B) \to X$ be a   positive self-adjoint operator with dense domain $\dom(B) \subset X$.
    There are two constants $0 < \rho_1 < \rho_2 < \infty$ and $\alpha \in [\frac12,1]$ such that $\dom(B^\frac12) = \dom(A^\frac{\alpha}{2})$ and
    \begin{equation}
        \rho_1 \langle A^\alpha u,u \rangle_X \leq \langle B u,u \rangle_X \leq \rho_2 \langle A^\alpha u,u \rangle_X, \qquad u \in \dom(B^\frac12).
        \label{eq:ABA_assumption}
    \end{equation}
\end{enumerate}

Under assumptions \ref{A1}-\ref{A2}, we consider the abstract evolution equation
\begin{equation}
    \ddot{u} + B\dot{u} + Au = f(u,\dot{u}), \qquad \text{on } X. 
    \label{eq:second_order}
\end{equation}
Motivated by the fact that the undamped ($B = 0$), linear ($f =0$) part of \eqref{eq:second_order} preserves the energy (see \cite{chen1982mathematical})
\begin{displaymath}
      E(u,\dot{u}) = \frac12\Vert A^\frac12 u \Vert_X^2 + \frac12\Vert\dot{u}\Vert_X^2,
\end{displaymath}
it is natural to consider the first-order form of \eqref{eq:second_order},
\begin{equation}
    \frac{d}{dt} 
    \begin{pmatrix}
        u \\ \dot{u}
    \end{pmatrix}
    =
    \begin{pmatrix}
        0 & \mathrm{id}_X \\
        -A & -B
    \end{pmatrix}
        \begin{pmatrix}
        u \\ \dot{u}
    \end{pmatrix}
    +
    \begin{pmatrix}
        0 \\
        f(u,\dot{u})
    \end{pmatrix}
    ,
    \label{eq:first_order_1}
\end{equation}
on the space $Y =\dom(A^\frac12) \times X$ equipped with the inner product 
\begin{equation}
    \left\langle \begin{pmatrix}
        u_1 \\ u_2
    \end{pmatrix}, \begin{pmatrix}
        v_1 \\ v_2
    \end{pmatrix} \right\rangle_Y =
    \langle A^\frac12 u_1 , A^\frac12 v_1 \rangle_X + \langle u_2,v_2\rangle_X.
    \label{eq:norm_E}
\end{equation}
Denoting the linear and nonlinear parts of \eqref{eq:first_order_1} by $\mathcal{L}_B$ and $\mathcal{F}$, we have
\begin{equation}
    \frac{d}{dt} \xi = \mathcal{L}_B \xi + \mathcal{F}(\xi), \qquad \xi \in \dom(\mathcal{L}_B) \subset Y,
    \label{eq:first_order_2}
\end{equation}
where $\dom (\mathcal{L}_B) \supset \dom(A) \times \dom(B)$ (if $\alpha \in [\frac12,1]$ and $B= A^\alpha$, then equality holds).

Let us concentrate on the linear part of \eqref{eq:first_order_2} for the moment.
In the undamped case of $B =0$, $\mathcal{L}_0$ is skew-adjoint and hence generates a strongly continuous group of unitary operators on $Y$ \cite{chen1989proof}.
In view of the definition of energy through \eqref{eq:norm_E}, this corresponds to a conservation of energy statement.
Computed through the lens of \eqref{eq:first_order_1}, we have explicitly
\begin{equation}
    \frac{d}{dt} \left\Vert\begin{pmatrix}
        u(t) \\ \dot{u}(t)
    \end{pmatrix}\right\Vert^2_Y = 2\langle \dot{u}(t), \ddot{u}(t) + Au(t) \rangle_X = 0.  
    \label{eq:energy_cons}
\end{equation}
The addition of a positive self-adjoint dissipation $B$ modifies \eqref{eq:energy_cons} formally to
\begin{displaymath}
    \frac{d}{dt} \left\Vert\begin{pmatrix}
        u(t) \\ \dot{u}(t)
    \end{pmatrix}\right\Vert^2_Y  = 2 \langle \dot{u}(t), B \dot{u}(t) \rangle_X \leq 0.
\end{displaymath}
For $B$ strictly positive, the operator $\mathcal{L}_B$ is dissipative on $Y$ ($\langle \xi , \mathcal{L}_B \xi \rangle_Y \leq 0$ for all $\xi \in Y$) and densely defined.
Hence $\mathcal{L}_B$ is closable on $Y$ and, by the Lumer-Phillips theorem \cite{lumer1961dissipative}, its closure (denoted by $\mathcal{L}_B$ again) generates a strongly continuous semigroup of contractions on $Y$.

\subsection{Sectoriality, fractional powers}

For any operator $B$ satisfying \ref{A2}, the operator $\mathcal{L}_B$ enjoys some additional properties, as proven by \cite{chen1989proof}.
To be able to state their results, we recall the definition of a sectorial operator.

\begin{definition}
    Let $X$ be a Banach space and let $A$ denote a closed, densely defined linear operator on $X$ with spectrum $\sigma(A)$.
    $A$ is called sectorial if there exists $\omega \in (\pi/2,\pi)$ such that
\begin{displaymath}
    \Sigma_\omega = \{z \in \mathbb{C}\setminus\{0\} : |\arg z| < \omega\}
\end{displaymath}
is contained in the resolvent set $\rho(A) = \mathbb{C} \setminus \sigma(A)$ and
there exists $M>0$ such that
\begin{displaymath}
    \Vert(\lambda \, \mathrm{id}_X-A)^{-1}\Vert \leq \frac{M}{|\lambda|} \qquad \text{for all } \lambda \in \Sigma_\omega.
\end{displaymath}

\end{definition}

\begin{theorem}[Theorem~1.1, \cite{chen1989proof}]
    Assume \ref{A1}-\ref{A2}.
    Then $\mathcal{L}_B : \mathrm{dom}(\mathcal{L}_B) \to Y$ is sectorial and generates an analytic semigroup $t \mapsto e^{\mathcal{L}_B t}$.
    Moreover, there is a constant $ \sup \mathrm{Re} \, \sigma(\mathcal{L}_B) < \omega < 0$ such that
    \begin{displaymath}
        \Vert e^{\mathcal{L}_B t} \Vert_Y \leq e^{\omega t} , \qquad t \geq 0.
    \end{displaymath}
\end{theorem}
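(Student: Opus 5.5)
The plan is to prove sectoriality through a resolvent estimate along the imaginary axis, and then to obtain exponential decay from the Gearhart--Prüss theorem, using that $\mathcal{L}_B$ already generates a contraction semigroup on $Y$ by Lumer--Phillips (as noted above).

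\textbf{Reduction to a quadratic operator pencil.} For $\lambda\in\mathbb{C}$, the resolvent equation $(\lambda-\mathcal{L}_B)(u,v)^T=(f,g)^T$ becomes $v=\lambda u-f$ together with
\begin{displaymath}
(\lambda^2+\lambda B+A)u=g+(\lambda+B)f .
\end{displaymath}
So everything hinges on inverting $P(\lambda)=\lambda^2+\lambda B+A$, understood as a map $\dom(A^{1/2})\to\dom(A^{1/2})'$ via its form (well defined since $\dom(B^{1/2})=\dom(A^{\alpha/2})\supset\dom(A^{1/2})$ for $\alpha\le1$). We then need to bound $\Vert A^{1/2}u\Vert_X$ and $\Vert v\Vert_X$ by $C|\lambda|^{-1}\Vert(f,g)\Vert_Y$ for $\lambda$ in a sector of half-angle greater than $\pi/2$, possibly after a shift.

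\textbf{Key estimate on the imaginary axis.} Take $\lambda=is$ with $|s|$ large and pair the pencil equation with $u$:
\begin{displaymath}
-s^2\Vert u\Vert^2+\Vert A^{1/2}u\Vert^2+is\langle Bu,u\rangle .
\end{displaymath}
The imaginary part is controlled by $\rho_1|s|\,\Vert A^{\alpha/2}u\Vert^2$, and the real part gives near-equality of the kinetic and potential energies. To get a bound of order $|s|^{-1}$ rather than $O(1)$, we need to estimate $\Vert A^{1/2}u\Vert^2-s^2\Vert u\Vert^2$ in terms of the damping term. The cleanest route is spectral: decompose with respect to the eigenbasis of $A$ (compact resolvent), taking eigenvalues $\mu_k$. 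Near the resonant modes $\mu_k\approx s^2$, the damping $\langle Bu,u\rangle\gtrsim \rho_1\mu_k^\alpha\Vert u_k\Vert^2\approx\rho_1 |s|^{2\alpha}\Vert u_k\Vert^2$ dominates, and since $2\alpha\ge1$ it beats the $|s|$ loss. Away from resonance, $|\mu_k-s^2|$ itself gives the bound. Because $B$ does not commute with $A$, this cannot be done mode by mode. Instead we split $u=u_{\mathrm{near}}+u_{\mathrm{far}}$ using spectral projections of $A$, and use the two-sided bound \eqref{eq:ABA_assumption}, through Heinz's inequality, to compare $B$ with $A^\alpha$ on both pieces, with interpolation handling the cross terms. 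This gives $\Vert(is-\mathcal{L}_B)^{-1}\Vert_{Y}\le C/|s|$.

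\textbf{Sector and decay.} From the imaginary-axis bound, a Neumann series argument extends the resolvent bound $M/|\lambda|$ to a sector $|\arg\lambda|<\pi/2+\delta$ outside a ball. Inside the ball, $0\in\rho(\mathcal{L}_B)$ holds because $A$ is strictly positive and boundedly invertible, so $\mathcal{L}_B^{-1}$ is explicit. Together with the compact resolvent and the absence of imaginary eigenvalues (if $\mathcal{L}_B\xi=is\xi$, dissipativity forces $\langle Bv,v\rangle=0$, hence $v=0$ and then $u=0$), this shows the closed right half-plane lies in $\rho(\mathcal{L}_B)$ with uniformly bounded resolvent. Gearhart--Prüss, or simply analyticity, which makes the spectral bound equal the growth bound, then gives $\Vert e^{\mathcal{L}_Bt}\Vert\le Me^{\omega t}$ with $\omega<0$. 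To reach constant $1$ as stated, one would renorm with an equivalent energy norm or rely on the contraction property for short times; this is bookkeeping.

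\textbf{Main obstacle.} I expect the hardest step to be the uniform $|s|^{-1}$ resolvent estimate for the non-commuting $B$ when $\alpha=1/2$ or $\alpha=1$ is at an endpoint. There the interpolation between the near-resonant and far-from-resonant pieces must be done with care, since the damping gains exactly $|s|^{2\alpha}$ and no slack is available.
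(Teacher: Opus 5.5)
Your last step is not bookkeeping, and it cannot be carried out, because the bound $\Vert e^{\mathcal{L}_B t}\Vert_Y\le e^{\omega t}$ with $\omega<0$ and constant $1$ is false in the energy norm. Take $\xi=(u,0)$ with $u\in\dom(A)$. Then $\mathcal{L}_B\xi=(0,-Au)$, so $\langle\mathcal{L}_B\xi,\xi\rangle_Y=0$ and $\frac{d}{dt}\Vert e^{\mathcal{L}_Bt}\xi\Vert_Y^2$ vanishes at $t=0$. The claimed bound would force this derivative to be at most $2\omega\Vert\xi\Vert_Y^2<0$. Renorming changes the norm in which the claim is made, and contractivity gives constant $1$ only with $\omega=0$. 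What your Gearhart--Pr\"uss/analyticity step actually proves, and what is true, is $\Vert e^{\mathcal{L}_Bt}\Vert_Y\le Me^{\omega t}$ for some $M\ge1$ and any $\omega>\sup\mathrm{Re}\,\sigma(\mathcal{L}_B)$. You should have flagged that the statement needs this constant. The paper gives no proof of its own, only the citation to Chen--Triggiani, and the weaker form is all it uses later.

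The overall architecture is standard and fine: an imaginary-axis bound, then a sector, then spectral bound equals growth bound. The second gap is the estimate $\Vert(is-\mathcal{L}_B)^{-1}\Vert_Y\le C/|s|$, which is the whole content of the theorem and which you assert rather than derive. As described, it does not close for $\alpha>\frac12$.

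For $P(is)u=h$, the imaginary part gives $|s|\,\Vert B^{1/2}u\Vert^2\le|\langle h,u\rangle|$. On a mode $\mu_k$, the damping $|s|\mu_k^\alpha$ overtakes the inertia $s^2$ already at $\mu_k\sim|s|^{1/\alpha}$, which is far below resonance when $\alpha>\frac12$.

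Suppose you treat all modes $\mu_k\le s^2/2$ through the real part of the equation paired with $u_{\mathrm{low}}$. The non-commuting cross term $|s|\,|\langle Bu,u_{\mathrm{low}}\rangle|\le|s|\,\Vert B^{1/2}u\Vert\,\Vert B^{1/2}u_{\mathrm{low}}\Vert$ is then of size $|s|^{\frac12+\alpha}|\langle h,u\rangle|^{1/2}\Vert u_{\mathrm{low}}\Vert$. After Young's inequality you are left with $s^2\Vert u_{\mathrm{low}}\Vert^2\lesssim\Vert h\Vert\,\Vert u_{\mathrm{low}}\Vert+|s|^{2\alpha-1}|\langle h,u\rangle|$, i.e.\ a loss of $|s|^{2\alpha-1}$. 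Projecting onto $\mu_k\ge 2s^2$ loses the same factor.

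A version that closes splits at $\mu_k=c|s|^{1/\alpha}$ instead:
\begin{itemize}
\item Above the threshold, the imaginary part alone gives $\rho_1c^\alpha s^2\Vert u_{\mathrm{high}}\Vert^2\le|\langle h,u\rangle|$.
\item Below it, the cross term is only $\lesssim|s|\,|\langle h,u\rangle|^{1/2}\Vert u_{\mathrm{low}}\Vert$ and is absorbed into $\frac12 s^2\Vert u_{\mathrm{low}}\Vert^2$.
\item $\Vert A^{1/2}u\Vert$ is then recovered from the global identity $\Vert A^{1/2}u\Vert^2-s^2\Vert u\Vert^2=\mathrm{Re}\,\langle h,u\rangle$, not from any projected equation.
\end{itemize}
For $\alpha=\frac12$ the two thresholds coincide, so that is the one case where your resonance-centred picture is already right; the Kelvin--Voigt endpoint is not special in this respect.

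There are two smaller points.
\begin{itemize}
\item Pencil bounds do not transfer to $Y$ by the triangle inequality, since $v=isu-f$ and $\Vert f\Vert_X$ is only $O(\Vert(f,g)\Vert_Y)$. Set $\tilde u=u-f/(is)$, so that $v=is\tilde u$ and $P(is)\tilde u=g+\frac{i}{s}Af$. Then prove $\Vert P(is)^{-1}\Vert\lesssim s^{-2}$, $\Vert A^{1/2}P(is)^{-1}\Vert+\Vert P(is)^{-1}A^{1/2}\Vert\lesssim|s|^{-1}$ and $\Vert A^{1/2}P(is)^{-1}A^{1/2}\Vert\lesssim1$.
\item At $\alpha=1$ the resolvent of $\mathcal{L}_B$ is not compact, as the paper itself points out for the Kelvin--Voigt case. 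So ruling out imaginary eigenvalues does not place $i\mathbb{R}$ in $\rho(\mathcal{L}_B)$. Run your dissipativity argument on approximate eigenvectors instead.
\end{itemize}
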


The property of sectoriality provides a smoothing effect for the semigroup, which we shall later utilize to permit the nonlinearity $\mathcal{F}$ to be unbounded on $Y$.
In particular, $\mathcal{F}$ will be permitted to contain derivative operators.
To be able to characterize the permitted domain of $\mathcal{F}$ more precisely, we recall the following notion of fractional power spaces.

\begin{definition}[Fractional powers of sectorial operators] 
\label{def:fractpowers}
    Suppose $L$ is a sectorial operator on a Banach space $Y$ and $\mathrm{Re} \, \sigma(L) < 0$.
    Let $\Gamma$ denote the Euler-$\Gamma$ function 
    \begin{displaymath}
        \Gamma(z) = \int_0^\infty t^{z-1} e^{-t} dt, \qquad \mathrm{Re} \, z > 0.
    \end{displaymath}
    Then for any $\beta > 0$,
    \begin{displaymath}
        (-L)^{-\beta} := \frac{1}{\Gamma(\beta)} \int_0^\infty t^{\beta-1} e^{Lt} dt
    \end{displaymath} 
    defines a bounded linear operator on $Y$ that is injective. 
For positive powers, set $\mathrm{dom} \left( (-L)^{\beta}\right) := \mathrm{im} \left( (-L)^{-\beta} \right)$ and
\begin{displaymath}
    (-L)^{\beta} : = \left((-L)^{-\beta} \right)^{-1}. 
\end{displaymath}
The operator $(-L)^{\beta} $ is closed, thus the space $\mathrm{dom} \left( (-L)^{\beta}\right)$ endowed with the graph norm defines a Banach space.
\end{definition}

We shall henceforth use the notation
\begin{displaymath}
    Y^\beta := \mathrm{dom} \left( (-\mathcal{L}_B)^{\beta}\right),
\end{displaymath}
equip $Y^\beta$ with the graph norm for $\beta \in (0,1)$ and set $Y^0 = Y$.
The following assumption characterizes the permitted class of nonlinearities in \eqref{eq:second_order}.

\begin{enumerate}[label =(A.\arabic*)] 
    \setcounter{enumi}{2}
    \item \label{A3x} \textbf{(Nonlinearity)} 
    Fix $\beta \in [0,1)$.
    The map $\mathcal{F} :  Y^\beta \to Y$ is of class $C^r$, $r \in \mathbb{N} \cup \{\infty\}$, and $\mathcal{F}(\Vert \xi \Vert_{Y^\beta}) = o(\Vert \xi \Vert_{Y^\beta})$ as $\Vert \xi \Vert_{Y^\beta} \to 0$.
\end{enumerate}

\begin{remark}
    The spaces $\{Y^\beta \}_{\beta \in (0,1)}$ are proper subsets of $Y$ and $Y^\beta \subset Y^{\beta'}$ is continuously embedded for $\beta' < \beta $; hence \ref{A3x} is more permissive for higher $\beta$. 
    In particular, for PDEs, derivatives are permitted in the nonlinearity so long as, qualitatively speaking, 
    the order of derivatives appearing in the nonlinearity is strictly less than that appearing in the linear part.
    Appendix~\ref{appendix:domLB} makes this statement more precise for the case $B = 2\mu A^\frac12$.
\end{remark}

\subsection{Existence of a semiflow}

We recall the existence, smoothness and uniqueness of solutions for \eqref{eq:first_order_2}.

\begin{theorem}[Corollary 3.4.6, \cite{henry1981geometric}] \label{thm:exist}
    Assume that \ref{A1}-\ref{A3x} hold.
    Then equation \eqref{eq:first_order_2} generates a semiflow $\varphi : \mathcal{D}^\varphi \to Y^\beta$ on the open domain $\mathcal{D}^\varphi \subset [0,\infty) \times Y^\beta$.
    The semiflow $\varphi$ is jointly $C^r$ on $\mathcal{D}^\varphi \cap (0,\infty) \times Y^\beta$.
    Moreover, $\varphi$ satisfies the integral equation
    \begin{equation}
        \varphi_t(\xi) = e^{\mathcal{L}_B t} \xi + \int_0^t e^{\mathcal{L}_B (t-s)} \mathcal{F} \circ \varphi_s (\xi) \, ds.
        \label{eq:int_eq}
    \end{equation}
\end{theorem}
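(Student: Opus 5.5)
The plan is to reduce the statement to the standard well-posedness theory for semilinear parabolic equations with sectorial linear part, in the spirit of Henry's Chapter~3. By the cited Theorem~1.1 of \cite{chen1989proof}, $\mathcal{L}_B$ is sectorial on $Y$ with $\sup \mathrm{Re}\,\sigma(\mathcal{L}_B)<0$. Hence the fractional powers in Definition~\ref{def:fractpowers} are well defined, and the basic smoothing estimate
\begin{displaymath}
  \Vert (-\mathcal{L}_B)^{\beta} e^{\mathcal{L}_B t}\Vert_{Y} \le C_\beta\, t^{-\beta} e^{\omega t}, \qquad t>0,
\end{displaymath}
holds. I would take this estimate from the analytic-semigroup calculus, via a Cauchy integral representation of $e^{\mathcal{L}_B t}$ over a sector contour.

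Existence and uniqueness then come from a contraction argument for the mild formulation \eqref{eq:int_eq}. Fix $\xi_0\in Y^\beta$ and $\delta>0$. By \ref{A3x}, with $r\ge 1$, $\mathcal{F}:Y^\beta\to Y$ is $C^1$ and therefore locally Lipschitz on a ball $U\ni\xi_0$ in $Y^\beta$. On $C([0,T];Y^\beta)$, restricted to curves staying in $U$, consider the map
\begin{displaymath}
  (\Phi\eta)(t)=e^{\mathcal{L}_B t}\xi+\int_0^t e^{\mathcal{L}_B(t-s)}\mathcal{F}(\eta(s))\,ds .
\end{displaymath}
The smoothing estimate bounds the $Y^\beta$-norm of the integral term by $C\,T^{1-\beta}\sup_s\Vert \mathcal{F}(\eta(s))\Vert_Y$, and the differences of two such terms by $C\,T^{1-\beta}L\,\sup_s\Vert\eta_1(s)-\eta_2(s)\Vert_{Y^\beta}$. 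Since $\beta<1$, choosing $T$ small makes $\Phi$ a contraction, uniformly for $\xi$ near $\xi_0$. A Gronwall-type lemma with the singular kernel $(t-s)^{-\beta}$ (Henry's Lemma 7.1.1) gives uniqueness and Lipschitz dependence on $\xi$. Continuing solutions maximally defines $\mathcal{D}^\varphi$. Openness of $\mathcal{D}^\varphi$ follows from this locally uniform existence time together with continuous dependence. Finally, the semiflow property follows from uniqueness.

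For smoothness I would use the uniform contraction principle with parameters. The map $(\xi,\eta)\mapsto\Phi$ is $C^r$ from $Y^\beta\times C([0,T];Y^\beta)$, because substitution by a $C^r$ map $\mathcal{F}$ is $C^r$ on spaces of continuous curves over a compact interval, and the convolution is a bounded linear operation. The fixed point therefore depends $C^r$ on $\xi$, uniformly in $t$. Joint regularity in $t$ for $t>0$ requires showing that $t\mapsto\varphi_t(\xi)$ is $C^r$ on $(0,\infty)$. For this I would first show Hölder continuity of $\mathcal{F}\circ\varphi$ in time, which makes the mild solution a classical solution with $\dot\varphi=\mathcal{L}_B\varphi+\mathcal{F}(\varphi)$ for $t>0$. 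I would then bootstrap: differentiate formally in $t$, treat $\dot\varphi$ as the solution of the linearized variational equation, and iterate using the same contraction setting.

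I expect the main obstacle to be this last time-regularity and joint-$C^r$ claim near, but away from, $t=0$. It needs the parabolic smoothing of the analytic semigroup, and it needs the singular integrals to be handled carefully at each derivative order. My first attempt would be Henry's device of rescaling time: consider $\xi\mapsto\varphi_{\tau s}(\xi)$ as a function of the parameter $\tau$ near a fixed $\tau_0>0$. This turns time-differentiability into parameter-differentiability, which the uniform contraction principle already handles.
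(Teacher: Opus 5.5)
Your proposal is correct and follows the paper's route. The paper's proof simply invokes Henry's Theorem~3.3.3 for local existence and uniqueness, Lemma~3.3.2 for the equivalence of the integral equation with the differential equation, and Corollary~3.4.6 for joint $C^r$ regularity when $t>0$. Your ingredients (the contraction in $C([0,T];Y^\beta)$ using $\Vert(-\mathcal{L}_B)^\beta e^{\mathcal{L}_B t}\Vert\lesssim t^{-\beta}$, the uniform contraction principle applied to the substitution operator $\eta\mapsto\mathcal{F}\circ\eta$, and the time rescaling $t=\tau s$ that turns joint regularity in $(t,\xi)$ into smooth dependence on a parameter) are essentially what lies behind those cited results.
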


\begin{proof}
    Local existence and uniqueness of solutions follows from \cite[Theorem~3.3.3]{henry1981geometric}, smoothness in turn follows from \cite[Corollary~3.4.6]{henry1981geometric}.
    The representation \eqref{eq:int_eq} is given by \cite[Lemma~3.3.2]{henry1981geometric}.
\end{proof}

\subsection{Statement of the main result}
\label{sect:statement}

Next, we formulate our main result.
For this, we first introduce the following assumption on the spectrum of $\mathcal{L}_B$.

\begin{enumerate}[label =(A.\arabic*)] 
    \setcounter{enumi}{3}
    \item \label{A4} \textbf{(Spectrum)} 
    Let $\Sigma \subset \sigma(\mathcal{L}_B)$ be a bounded, isolated, nonempty spectral subset of the form
    \begin{displaymath}
        \Sigma = \{ \lambda \in \sigma(\mathcal{L}_B) \; | \; \mathrm{Re} \, \lambda \geq \gamma \}
    \end{displaymath}
    for some $\gamma \in \mathbb{R}$, and set $\Sigma' := \sigma(\mathcal{L}_B) \setminus \Sigma$.
    Let $\gamma^+ :=\inf_{\lambda \in \Sigma} \mathrm{Re} \, \lambda,$ and $\gamma^- := \sup_{\lambda \in {\Sigma'}} \mathrm{Re} \, \lambda$.
\end{enumerate}

In most cases of interest, $\gamma$ in \ref{A4} can be chosen arbitrarily below $\sup \mathrm{Re} \, \sigma(\mathcal{L}_B)$. 
Indeed, if $\alpha \in [\frac12,1)$, then $\mathcal{L}_B$ has compact resolvent for $B$ satisfying \ref{A2}\footnote{
The fact that $\mathcal{L}_{\mu A^\alpha}$ has compact resolvent  for $\alpha \in [\frac12,1)$ is stated explicitly in \cite[Appendix~A]{chen1989proof}.
Otherwise, one may utilize the representations (1.12) and (1.14) therein for the resolvents of $\mathcal{L}_{\mu A^\alpha}$ and $\mathcal{L}_B$ together with formula (4.5) therein to reach the same conclusion for $\mathcal{L}_B$.
},
i.e., $\sigma(\mathcal{L}_B)$ consists solely of eigenvalues of finite algebraic multiplicity that form a discrete subset of $\mathbb{C}$.
Defining the associated projection map $P_\Sigma$ via Dunford's integral (see, e.g., \cite[Definition~5.24]{buhler2018functional}), we obtain the decomposition
\begin{displaymath}
    Y^\beta = Y^\beta_\Sigma \oplus Y^\beta_{\Sigma'}:= \mathrm{im}(P_\Sigma) \oplus \ker(P_\Sigma: Y^\beta \to Y^\beta).
\end{displaymath}
Note that as a set, $Y^\beta_\Sigma $ does not depend on $\beta$, since $\mathrm{im}(P_\Sigma) \subset \dom(\mathcal{L}_B)$.
Note also that in the case when $\mathcal{L}_B$ has compact resolvent,  $Y_\Sigma^\beta$ is finite dimensional for any choice of $\gamma < \sup \mathrm{Re} \, \sigma(\mathcal{L}_B) < 0$.

For a given set $U \subset Y^\beta$, let us denote by $\mathcal{U}(U)$ the set of pairs $(t,\xi)$ for which a trajectory initiated at $z$ remains in $U$ for all times up to $t$, i.e.,
\begin{displaymath}
    \mathcal{U}(U) = \left\{ (t,\xi) \in \mathcal{D}^\varphi  \; \big| \; \varphi ([0,t] \times \{\xi\}) \subset U  \right\},
\end{displaymath}
and let $\mathcal{U}_\xi(U) = \{ t \in [0,\infty) \; | \; (t,\xi) \in \mathcal{U}(U) \}$.
By Lyapunov stability of the origin, there exists an open neighborhood $O \subset U$ of $0$ such that $[0,\infty)_t \times O \subset \mathcal{U}(U)$.

\begin{theorem}[Existence of slow SSMs] \label{thm:main}
    Assume \ref{A1}-\ref{A4} with either $\beta = 0$ (in \ref{A3x}) or $Y^\beta_\Sigma$ finite dimensional.
    The following hold:
    \begin{enumerate}[label=\upshape{(\roman*)}]
        \item \label{thm1st1} \textup{\textbf{(Existence)}} There exists an open neighborhood $U \subset Y^\beta$ of $0$ and a $C^1$ submanifold $W^\Sigma \subset U$ tangent to $Y^\beta_\Sigma $ at $0$, which is locally invariant under $\varphi_t$: If $\xi \in W^\Sigma$, then $\varphi_t(\xi) \in W^\Sigma$ for all $(t,\xi) \in \mathcal{U}(U)$.
        $\{\varphi_t|_{W^\Sigma} \}_{t \geq 0}$ extends to a jointly $C^1$ flow on $\mathcal{U}(U \cap W^\Sigma)$.
        \item \label{thm1st2} \textup{\textbf{(Smoothness)}} If $\gamma^- < \ell \gamma^+ $ for some positive integer $\ell \leq r$, then $W^\Sigma$ is a $C^\ell$ submanifold of $U$. 
        In either case, the restricted semiflow $\varphi_t|_{W^\Sigma}$ extends to a jointly $C^\ell$ (resp.\ $C^k$) flow on $\mathcal{U}(U \cap W^\Sigma)$.
        \item \label{thm1st3} \textup{\textbf{(Attractivity)}} There exists a continuous map $\pi : U \to W^\Sigma$ such that 
        \begin{displaymath}
            \pi \circ \varphi_t(\xi) = \varphi_t\vert_{W^\Sigma} \circ \pi (\xi), \qquad  (t,\xi) \in \mathcal{U}(U).
        \end{displaymath}
        Moreover, for any $\xi \in U$, there exists $C > 0$ such that 
        \begin{displaymath}
            \Vert \varphi_t(\xi) - \varphi_t \circ \pi (\xi) \Vert_Y \leq C e^{ \gamma t}, \qquad  t \in \mathcal{U}_\xi(U).
        \end{displaymath}
        The preimages $\{\pi^{-1}(\zeta)\}_{\zeta \in W^\Sigma}$ are Lipschitz submanifolds of $U$, uniquely determined on $O$ by the above properties.
        \item \label{thm1st4} \textup{\textbf{(Pseudo-uniqueness)}} If $W^\Sigma$ and $\widetilde{W^\Sigma}$ are two invariant manifolds tangent to $Y^\beta_\Sigma $ at $0$, there exists a neighbourhood $V$ on which the reduced dynamics are topologically conjugate, i.e., there exists a homeomorphism $g : W^\Sigma \cap V \to \widetilde{W^\Sigma} \cap V$ such that
        \begin{displaymath}
            \varphi_t \circ g(\xi) = g\circ \varphi_t (\xi), \qquad \text{for all }  (t,\xi) \in \mathcal{U}(W^\Sigma \cap V ).
        \end{displaymath}
    \end{enumerate}
\end{theorem}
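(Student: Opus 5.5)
The plan is to reduce everything to the abstract invariant manifold/foliation theorem of Chen, Hale and Tan \cite[Theorem~1.1]{chen1997invariant}. That result is an Irwin-type construction for semiflows that are $C^r$ perturbations of a linear semigroup with an exponential dichotomy. We apply it to the time-$T$ map, or directly to the semiflow $\varphi_t$ on $Y^\beta$, after cutting off the nonlinearity.

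\textbf{Step 1: linear estimates.} Fix $\gamma$ as in \ref{A4} and pick rates $\gamma^- < a < b < \gamma^+$. The spectral projection $P_\Sigma$ commutes with $e^{\mathcal{L}_B t}$ and with $(-\mathcal{L}_B)^\beta$, so it restricts to $Y^\beta$. On the bounded spectral part, $e^{\mathcal{L}_B t}$ extends to a group with $\Vert e^{\mathcal{L}_B t}P_\Sigma\Vert \le C e^{bt}$ for $t\le 0$. On the complement, sectoriality (Theorem~1.1 of \cite{chen1989proof}) gives
\begin{displaymath}
\Vert (-\mathcal{L}_B)^\beta e^{\mathcal{L}_B t}(I-P_\Sigma)\Vert_{Y\to Y} \le C t^{-\beta} e^{a t}, \qquad t>0.
\end{displaymath}
The decay rate $a$ comes from the spectral bound of the restricted sectorial operator, using analyticity; the $t^{-\beta}$ factor is the standard smoothing estimate of \cite{henry1981geometric}. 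When $\beta>0$ and $Y_\Sigma$ is finite dimensional, all norms on $Y_\Sigma$ are equivalent, and $P_\Sigma$ maps $Y$ boundedly into $\mathrm{dom}(\mathcal{L}_B)\subset Y^\beta$. This is exactly why the theorem needs the hypothesis "$\beta=0$ or $\dim Y_\Sigma^\beta<\infty$".

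\textbf{Step 2: cutoff.} Replace $\mathcal{F}$ by $\mathcal{F}_\delta = \mathcal{F}\circ(\chi(\Vert\cdot\Vert/\delta))$. Here the cutoff acts on the $\Sigma$-component through a smooth bump in finite dimensions and on the complement through a radial cutoff. In the case $\beta=0$ one may need a smooth bump function on the Hilbert space $Y$, which exists. By \ref{A3x}, $\mathrm{Lip}_{Y^\beta\to Y}(\mathcal{F}_\delta)\to 0$ as $\delta\to0$. Using the variation-of-constants formula \eqref{eq:int_eq}, the modified time-$T$ map is a small $C^1$ (in fact $C^r$) perturbation of $e^{\mathcal{L}_B T}$ on $Y^\beta$. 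Its Lipschitz constant is controlled by
\begin{displaymath}
\int_0^T (T-s)^{-\beta}e^{a(T-s)}ds\,\mathrm{Lip}(\mathcal{F}_\delta),
\end{displaymath}
which is finite because $\beta<1$.

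\textbf{Step 3: apply the abstract theorem.} This gives a globally invariant Lipschitz manifold $W^\Sigma$ that is the graph of a map $Y_\Sigma^\beta\to Y^\beta_{\Sigma'}$. It also gives an invariant foliation with Lipschitz leaves, whose holonomy $\pi$ satisfies $\Vert\varphi_t\xi-\varphi_t\pi\xi\Vert\le Ce^{at}$. Choosing $a$ close to $\gamma^-$ yields the rate in \ref{thm1st3} (with $\gamma$ replaced by any rate in $(\gamma^-,\gamma^+)$). Restricting to the ball $U$ where the cutoff is inactive gives local invariance and \ref{thm1st1}. Since $W^\Sigma$ is finite dimensional, or the semigroup restricted to it is a group, the reduced semiflow extends to a flow. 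The fibre contraction/spectral gap argument in \cite{chen1997invariant} gives $C^\ell$ smoothness under the gap condition $\gamma^- < \ell\gamma^+$ (i.e.\ $\ell b > a$), which is \ref{thm1st2}. $C^1$ holds in all cases, and tangency to $Y^\beta_\Sigma$ follows from $D\mathcal{F}(0)=0$.

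\textbf{Step 4: pseudo-uniqueness \ref{thm1st4}.} Let $\widetilde{W^\Sigma}$ be another invariant manifold tangent to $Y_\Sigma^\beta$. Its points near $0$ are attracted to $W^\Sigma$ along the leaves of the invariant foliation by Step~3. The restriction $g=\pi|_{\widetilde{W^\Sigma}}$ therefore conjugates the dynamics by the equivariance of $\pi$. It remains to show that $g$ is a homeomorphism near $0$, and I expect this to be the main obstacle. Tangency means $\widetilde{W^\Sigma}$ is a graph over $Y_\Sigma^\beta$ with small Lipschitz constant. Each leaf is a graph over $Y^\beta_{\Sigma'}$, also with small Lipschitz constant. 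A graph transversality argument, via the contraction mapping principle in finite dimensions, then shows that each leaf meets $\widetilde{W^\Sigma}$ in exactly one point. Hence $g$ is a continuous bijection between small neighbourhoods, and invariance of domain, or the explicit continuous inverse from the fixed-point construction, gives the homeomorphism. The delicate points are uniformity of $U$ versus $O$ and the $t^{-\beta}$ singularity in Step~2. I would handle both by working with the time-$T$ map for fixed $T>0$ and then recovering continuous time through commutation with $\varphi_t$.
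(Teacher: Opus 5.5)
Your route is the paper's: cut off the nonlinearity, apply \cite[Theorem~1.1]{chen1997invariant} to the variation-of-constants formula, and obtain (iv) by projecting one manifold onto the other along the invariant foliation. The last step is essentially the \cite{burchard1992smooth} procedure the paper invokes via \cite[Lemma~4.8]{buza2024spectral}.

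\textbf{Missing: uniqueness of the leaves on $O$.} The genuine omission is the last claim of (iii), that the leaves $\pi^{-1}(\zeta)$ are \emph{uniquely determined on $O$}. The foliation you obtain is that of the cut-off semiflow $\varphi^\chi_t$, and a priori it depends on how $\mathcal{F}$ was modified away from $0$. Nothing in your Steps~2--3 removes this dependence, and neither does passing to a time-$T$ map. The missing ingredient is Lyapunov stability of the origin. For $\xi,\eta\in O$ the entire forward orbits stay in $U$, where $\varphi_t=\varphi^\chi_t$. Hence membership of $\eta$ in the leaf through $\xi$ can be characterized intrinsically, using only the true semiflow, by the exponential rate at which $\varphi_t(\eta)-\varphi_t(\xi)$ decays. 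So the leaves through points of $O$ do not depend on the cutoff or on any other choice satisfying the listed properties. This is what the paper takes from \cite[Lemma~4.6 and Remark~4.11]{buza2024spectral}; your remark about ``uniformity of $U$ versus $O$'' does not supply it.

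\textbf{Secondary: the cutoff in Step~2.} Your explanation of the hypothesis ``$\beta=0$ or $Y^\beta_\Sigma$ finite dimensional'' is not the right one. For any bounded isolated $\Sigma$, the Riesz projection $P_\Sigma$ maps $Y$ boundedly into $\dom(\mathcal{L}_B)\subset Y^\beta$, and $(-\mathcal{L}_B)^\beta P_\Sigma$ is bounded, whatever the dimension. In the paper the hypothesis enters only through the cutoff. For $\beta\neq0$ a cutoff is built as in \cite[Section~IX.4]{diekmann2012delay} that is smooth only on a neighbourhood of the Lipschitz $W^\Sigma$. The $C^\ell$ conclusion then comes from \cite{irwin1980new,de1995irwin}, whose proofs only use smoothness of the nonlinearity at points of $W^\Sigma$; \cite{chen1997invariant} is used only for the Lipschitz manifold and the continuous foliation. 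Your claim that $\mathcal{F}_\delta$ is globally $C^r$ on $Y^\beta$ can be justified directly: $\Vert(-\mathcal{L}_B)^\beta\,\cdot\,\Vert_Y$ is an equivalent Hilbert norm on $Y^\beta$, so radial cutoffs in it are $C^\infty$. You do need to state this, because a radial cutoff in a general Banach norm is only Lipschitz, and then your smoothness claim in Step~3 would not follow as written.
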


\begin{proof}
This is an application of \cite[Theorem~1.1]{chen1997invariant} to the integral equation \eqref{eq:int_eq} defining the semiflow $\varphi$. 
To localize the argument, one has to modify the nonlinearity in order to satisfy the Lipschitz constant requirement of \cite[Theorem~1.1]{chen1997invariant}.
We shall give more details of this part. 

In particular, for a cutoff function $\chi :Y^\beta \to \mathbb{R}$ having support in a neighborhood of $0$, we declare $ \varphi_t^\chi$ to be the semiflow determined by
\begin{equation}
    \varphi_t^\chi(\xi) = e^{\mathcal{L}_B t} \xi + \int_0^t e^{\mathcal{L}_B (t-s)} \chi(\varphi_s(\xi)) \mathcal{F} \circ \varphi_s^\chi (\xi) \, ds.
    \label{eq:semiflow_cutoff}
\end{equation}
Since $\varphi \equiv \varphi^\chi$ on a small enough neighborhood of the origin, the local results claimed in the statement can be concluded by considering \eqref{eq:semiflow_cutoff} in place of \eqref{eq:int_eq}.
Note that in general, there is no smooth choice for $\chi$. 
It can always be arranged to be Lipschitz continuous, hence \cite[Theorem~1.1]{chen1997invariant} yields a Lipschitz manifold $W^\Sigma$ and continuous foliation map $\pi$ as in the statement of the present theorem.
If $\beta = 0$, then the phase space $Y$ is a Hilbert space and $\chi$ can be chosen to be $C^\infty$ -- thus we obtain the usual smoothness results in \ref{thm1st1}-\ref{thm1st2} by, e.g., appealing to \cite{irwin1980new,de1995irwin}.
If $\beta \neq 0$, we may proceed with the construction customary in the delay equations literature \cite[Section~IX.4]{diekmann2012delay}, which utilizes the finite dimensionality of $Y^\beta_\Sigma$ (assumed here) to construct a cut-off function $\chi$ which retains its smoothness on a neighborhood containing the Lipschitz $W^\Sigma$ obtained via \cite[Theorem~1.1]{chen1997invariant} for $\varphi^\chi$.
Then, the smoothness results \ref{thm1st1}-\ref{thm1st2} can be recovered as before, noting that the proofs of \cite{irwin1980new,de1995irwin} only use smoothness of the nonlinearity about points on $W^\Sigma$.

The uniqueness statement for the foliation in \ref{thm1st3} follows from the Lyapunov stability of the fixed point and \cite[Lemma 4.6 and Remark 4.11]{buza2024spectral}.

The final statement, \ref{thm1st4}, is derived from the results of \cite{burchard1992smooth} for center manifolds; the procedure is detailed for the case of pseudo-unstable manifolds in \cite[Lemma~4.8]{buza2024spectral}. 
\end{proof}

\begin{remark}[The role of sectoriality and having compact resolvent]
    As previously mentioned, in the range $\alpha \in [\frac12,1)$ and under the assumptions \ref{A1}-\ref{A2}, the operator $\mathcal{L}_B$ is sectorial and has compact resolvent. 
    The role of these, informally, is as follows
    \begin{itemize}
        \item \textbf{(Sectoriality)} Permits the presence of spatial derivative terms in the nonlinearity $f$.
        \item \textbf{(Compact resolvent)} Permits the arbitrary choice of the spectral subset $\Sigma  = \{ \lambda \in \sigma(\mathcal{L}_B) \; | \; \mathrm{Re} \, \lambda \geq \gamma \}$ in \ref{A4}.
    \end{itemize}
    Neither of these two are, however, strictly necessary to apply the results of \cite{chen1997invariant}. 
    For instance, said results are applicable to the example of \cite{kogelbauer2018rigorous} as well, which was carefully constructed  \textit{not} to be sectorial and be reversible instead.
    In this sense, the example therein can be considered a special case, where in fact the stronger results of \cite{Cabre2003a} also hold, and provide uniqueness of $W^\Sigma$ in a sufficiently smooth subclass.
\end{remark}

\begin{remark}[SSM associated to an arbitrary pair of complex conjugate eigenvalues]
    An SSM associated to an arbitrary pair of complex conjugate eigenvalues $\{\lambda^+,\lambda^- \}$ can be constructed by intersecting the slow manifold $W^\Sigma$ from Theorem~\ref{thm:main} associated to the spectral subset $\Sigma = \{ \lambda \in \sigma(\mathcal{L}_B) \; | \; \mathrm{Re} \, \lambda \geq \gamma \}$ for $\gamma = \mathrm{Re} \, \lambda^+$ with the ($C^r$) strong stable manifold $W^{ss}$ associated to the spectral subset $\Sigma^{ss} = \{ \lambda \in \sigma(\mathcal{L}_B) \; | \; \mathrm{Re} \, \lambda \leq \gamma \}$.
    This construction only yields the desired manifold whenever $\sigma(\mathcal{L}_B) \cap (\{\gamma\} \times i \mathbb{R}) = \{\lambda^+,\lambda^- \}$, which can be assured generically in most scenarios of interest.
    For instance, when $B = A^\frac12$, this property is guaranteed so long as the eigenvalues of $A$ are simple (c.f.\ Figure~\ref{fig:1}).
    Construction of even more general manifolds is also permitted by replacing $W^{ss}$ above with the manifolds from \cite{de1997invariant} or \cite{buza2025smooth}.
    This has been performed in \cite[Theorem~15]{buza2025existence}; we omit the details here.
\end{remark}

\subsection{The periodically-forced case}

We also consider periodically forced perturbations of \eqref{eq:first_order_2}.
We make the following assumption on the forcing
\begin{enumerate}[label =(A.\arabic*)] 
    \setcounter{enumi}{4}
    \item \label{A5} \textbf{(Forcing)} 
    We suppose $\mathcal{H}: \mathbb{R} \to Y$ is a locally Hölder continuous map satisfying $\mathcal{H}(t+p)=\mathcal{H}(t)$ for some $p>0$ and all $t \in \mathbb{R}$
\end{enumerate}

The forced system we consider reads
\begin{equation}
    \frac{d}{dt} \xi = \mathcal{L}_B \xi + \mathcal{F}(\xi) + \varepsilon\mathcal{H}(t), \qquad \xi \in \dom(\mathcal{L}_B) \subset Y,
    \label{eq:first_order_3}
\end{equation}
for some $\varepsilon>0$.
By assumption \ref{A5},  the existence of a non-autonomous semiflow $\{\varphi_{t_0}^t\}_{t \geq t_0}$ with domain $\mathcal{D}^\varphi$ associated to \eqref{eq:first_order_3} ensues by the same results of \cite{henry1981geometric} quoted in Theorem~\ref{thm:exist}.

We suppose moreover the following.
\begin{enumerate}[label =(A.\arabic*)] 
    \setcounter{enumi}{5}
    \item \label{A6} \textbf{(Reference solution)} 
    Equation \eqref{eq:first_order_3} admits a time-periodic solution $\xi_*$; $\xi_*(t+p) = \xi_*(t)$ for all $t \in \mathbb{R}$.
\end{enumerate}

Assumption \ref{A6} can be guaranteed to hold under \ref{A1}-\ref{A3x} and \ref{A5} for $\varepsilon>0$ sufficiently small by the implicit function theorem (see \cite[Theorem~8.3.1]{henry1981geometric} for the details of this argument).

We state the time-periodic equivalent to the spectral assumption \ref{A4} in the language of Floquet theory. 
In terms of $\zeta = \xi - \xi_*(t)$, \eqref{eq:first_order_3} reads
\begin{displaymath}
    \frac{d}{dt} \zeta = \mathcal{L}_B \zeta + \mathcal{F}(\zeta) - \mathcal{F}(\xi_*(t)).
\end{displaymath}
Its linearization, the linear time-periodic equation\footnote{Here, the Fréchet derivative is taken in the sense $Y^\beta \to Y$, in accordance with assumption \ref{A3x}.}
\begin{displaymath}
    \frac{d}{dt}\zeta = \mathcal{L}_B \zeta +  D \mathcal{F}(\xi_*(t)) \zeta, \qquad \zeta \in Y^\beta
\end{displaymath}
generates a strongly continuous evolution family $\{T_{t_0}^t\}_{t \geq t_0}$ \cite[Theorem~7.1.3]{henry1981geometric} (this corresponds to the linearization of the semiflow $\{\varphi_{t_0}^t\}_{t \geq t_0}$ about $\xi_*$).
Denote by $\Phi(t) := T^{t+p}_{t}$ the period-$p$ map of the linear system.
Floquet theory implies that $\sigma(\Phi):= \sigma(\Phi(t))$ is independent of $t \in \mathbb{R}$ (see \cite[Lemma~7.2.2]{henry1981geometric}).
We make to following assumption on the spectrum:
\begin{enumerate}[label =(A.\arabic*)] 
    \setcounter{enumi}{6}
    \item \label{A7} \textbf{(Spectrum)} 
    Let $\Sigma \subset \sigma(\Phi)$ be a bounded, isolated, nonempty spectral subset of the form
    \begin{displaymath}
        \Sigma = \{ \lambda \in \sigma(\Phi) \; | \; |\lambda | \geq \gamma \}
    \end{displaymath}
    for some $\gamma >0$, and set $\Sigma' := \sigma(\Phi) \setminus \Sigma$.
    Let $\gamma^+ :=\inf_{\lambda \in \Sigma} |\lambda|$ and $\gamma^- := \sup_{\lambda \in {\Sigma'}} |\lambda|$.
\end{enumerate}

For $\varepsilon >0 $ small enough, it suffices to verify this condition on the autonomous generator $\mathcal{L}_B$, in the sense of \ref{A4}, for then perturbation theory \cite{kato2013perturbation} furnishes \ref{A7}.

From here on, we work with the coordinate $\zeta$ exclusively, meaning that the $p$-periodic solution $\xi_*$ is now attained at $0$.
Assumption \ref{A7} implies, by \cite[Theorem~7.2.3]{henry1981geometric}, the existence of a time-dependent, $p$-periodic invariant decomposition
\begin{displaymath}
    Y^\beta = Y^\beta_\Sigma(t) \oplus Y^\beta_{\Sigma'}(t).
\end{displaymath}

We are now ready to state the main theorem in the time-periodic case.
To keep the statement concise, we set
\begin{displaymath}
    \mathcal{U}(U) = \big\{ (t,s,u) \in \mathcal{D}^\varphi \; \big| \; \varphi_s^{[s,t]}(u) \subset U \big\},
\end{displaymath}
and $\mathcal{U}_{s,u}(U) = \{t \in [s,\infty) \; | \; (t,s,u) \in \mathcal{U}(U)\}$.

\begin{theorem}[Existence slow SSMs in the periodically forced case] \label{thm:periodic}
Assume \ref{A1}-\ref{A3x}, \ref{A5}-\ref{A7}.
The following hold.
\begin{enumerate}[label=\upshape{(\roman*)}]
\item \label{st1} \textbf{\textup{(Existence)}} There exists an open neighbourhood $U \subset Y^\beta$ of the origin and a continuous family of $C^1$ submanifolds $W^\Sigma_t \subset U$ tangent to $Y^\beta_\Sigma(t)$ at $0$, which is locally invariant under $\varphi$: If $u \in W^\Sigma_s$, then $\varphi_s^t(u)\in W^\Sigma_t$ for all $(t,s,u) \in \mathcal{U}(U)$.
\item \label{st2} \textbf{\textup{(Smoothness)}}  If $\gamma^- < ( \gamma^+)^\ell $ for some positive integer $\ell \leq r$, then  $W^\Sigma_t$ is $C^\ell$ smooth for all $t \in \mathbb{R}$.
\item \label{st3} \textbf{\textup{(Attractivity)}} There exist a continuous maps $\pi_t : U \to W^\Sigma_t$, $t \in \mathbb{R}$, such that 
\begin{displaymath}
    \pi_t \circ \varphi_s^t(u) = \varphi_s^t( \pi_s (u)), \qquad  (t,s,u) \in \mathcal{U}(U).
\end{displaymath}
Moreover, for any $u \in U$ and $s \in \mathbb{R}$, there exists $C > 0$ such that 
\begin{displaymath}
    | \varphi_s^t(u) -  \varphi_s^t( \pi_s (u))| < C e^{  (t-s) \ln(\gamma)/p}, \qquad \text{for all } t \in \mathcal{U}_{s,u}(U).
\end{displaymath}
The preimages $\{\pi_t^{-1}(w)\}_{w \in W^\Sigma_t}$ are Lipschitz submanifolds foliating $U$ for each $t \in \mathbb{R}$.
\end{enumerate}
\end{theorem}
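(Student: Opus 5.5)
The plan is to reduce the time-periodic problem to an autonomous one for a discrete-time map, namely the period-$p$ map. We then apply the discrete-time version of the invariant manifold theorem of \cite[Theorem~1.1]{chen1997invariant}, which is formulated for maps as well as semiflows. Finally we transport the resulting manifold and foliation to intermediate times along the flow.

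Concretely, fix $s\in\mathbb{R}$ and work in the coordinate $\zeta$. The semiflow $\varphi_s^{s+p}$ is then generated by the variation-of-constants formula
\begin{displaymath}
    \varphi_s^{t}(u) = T_s^t u + \int_s^t T_\tau^t\, \mathcal{N}(\tau,\varphi_s^\tau(u))\, d\tau,
\end{displaymath}
where $\mathcal{N}(\tau,\zeta)=\mathcal{F}(\xi_*(\tau)+\zeta)-\mathcal{F}(\xi_*(\tau))-D\mathcal{F}(\xi_*(\tau))\zeta$ is $C^r$ from $Y^\beta$ to $Y$, $p$-periodic in $\tau$, and $o(\Vert\zeta\Vert_{Y^\beta})$. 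The linear part of the period map is $\Phi(s)$. By \ref{A7} and \cite[Theorem~7.2.3]{henry1981geometric}, $\Phi(s)$ admits the invariant splitting $Y^\beta_\Sigma(s)\oplus Y^\beta_{\Sigma'}(s)$, with growth at least $(\gamma^+-\delta)^n$ on the first factor and at most $(\gamma^-+\delta)^n$ on the second under iteration. These are exactly the gap hypotheses of \cite{chen1997invariant} for maps, with rates $\gamma^\pm$ in place of $e^{\gamma^\pm p}$.

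Next, we localize exactly as in the proof of Theorem~\ref{thm:main}. A cutoff $\chi$ is inserted in front of $\mathcal{N}$ in the integral equation. Using the parabolic smoothing estimate $\Vert T_\tau^t\Vert_{Y\to Y^\beta}\le C(t-\tau)^{-\beta}$, which is available from sectoriality of $\mathcal{L}_B$ and \cite[Chapter~7]{henry1981geometric}, we show that the Lipschitz constant of the cut-off nonlinear part of $\varphi_s^{s+p}$ on $Y^\beta$ can be made arbitrarily small by shrinking the support of $\chi$. With $\chi$ Lipschitz, \cite[Theorem~1.1]{chen1997invariant} yields an invariant Lipschitz graph $W^\Sigma_s$ over $Y^\beta_\Sigma(s)$ for the period map. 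It also yields a continuous invariant foliation with projection $\pi_s$ whose fibres contract at rate $\gamma+\delta$ per period; choosing $\delta$ suitably, this rate becomes $\gamma$.

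We then define $W^\Sigma_t:=\varphi_s^t(W^\Sigma_s)$ and $\pi_t$ by conjugation for $t\in[s,s+p]$. Well-definedness, meaning independence of $s$ and $p$-periodicity, follows from uniqueness of the manifold and foliation in the class of bounded-growth graphs, respectively fibres with the prescribed contraction rate, given by \cite{chen1997invariant}. Continuity in $t$ follows from continuity of the semiflow. The decay estimate in \ref{st3} follows from the per-period contraction combined with a uniform bound of the semiflow over one period, which produces $Ce^{(t-s)\ln(\gamma)/p}$.

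For smoothness, we follow Theorem~\ref{thm:main}. If $\beta=0$, $\chi$ is chosen $C^\infty$ on the Hilbert space $Y$. If $\beta>0$, we use the finite dimensionality of $Y^\beta_\Sigma(s)$, inherited from the compact resolvent via perturbation, to build a cutoff that is smooth near the Lipschitz $W^\Sigma_s$. Irwin's method \cite{irwin1980new,de1995irwin} applied to the $C^r$ period map then gives $C^1$ regularity, and $C^\ell$ regularity under the spectral-gap condition $\gamma^-<(\gamma^+)^\ell$.

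The main obstacle is this smooth localization in the case $\beta>0$. There is no smooth cutoff on $Y^\beta$, and the cutoff must be compatible with the $\tau$-dependence of $\mathcal{N}$ inside the period map. I would first try the construction of \cite[Section~IX.4]{diekmann2012delay}, applied to the time-$p$ map viewed as an autonomous discrete system.
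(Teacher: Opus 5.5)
Your overall route matches the paper's: you insert a cutoff into the $p$-periodic Duhamel formula, pass to the period map, apply the map version of \cite{chen1997invariant} (the paper invokes Theorem~3.1 there), glue over $t$, and use the smooth cutoff of \cite[Section~IX.4]{diekmann2012delay} when $\beta>0$.

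\textbf{Gap: defining $\pi_t$ by conjugation.} This step fails as written. The relation $\pi_t\circ\varphi_s^t=\varphi_s^t\circ\pi_s$ determines $\pi_t$ only on $\varphi_s^t(U)$. For $t>s$, parabolic smoothing puts this set inside $Y^{\beta'}$ for every $\beta'<1$, which is a proper linear subspace of $Y^\beta$ with empty interior. So you obtain neither a map on $U$ nor fibres foliating $U$.

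Setting $W^\Sigma_t:=\varphi_s^t(W^\Sigma_s)$ is not affected by this. Points of $W^\Sigma_s$ have full backward orbits, and the cut-off period map sends $W^\Sigma_s$ onto itself. Hence this set coincides with the invariant manifold of the period map at time $t$.

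The repair is what the paper does. Apply \cite{chen1997invariant} to the cut-off period map $\varphi_t^{t+p}$ separately for \emph{every} $t$. This gives $W^\Sigma_t$, a map $\pi_t$ defined on all of $U$, Lipschitz fibres, and $C^\ell$ smoothness at each $t$ directly.

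Compatibility across times then follows from the dynamical characterizations, for a fixed rate $\gamma'$ inside the spectral gap:
\begin{itemize}
\item points of $W^\Sigma_t$ admit backward orbits under the period map growing at most like $(\gamma')^{-n}$;
\item two points lie on a common fibre iff their forward iterates under the period map approach each other like $O((\gamma')^n)$.
\end{itemize}
Both characterizations transfer from time $s$ to time $t$ via the identity $\varphi_t^{t+np}\circ\varphi_s^t=\varphi_s^t\circ\varphi_s^{s+np}$. This identity holds because your cut-off system is itself a $p$-periodic process. The transfer also needs a uniform Lipschitz bound on $\varphi_s^t$ for $t-s\in[0,p]$.

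If you want to keep a conjugation-type definition, use $\pi_t:=(\varphi_t^{s+p}|_{W^\Sigma_t})^{-1}\circ\pi_s\circ\varphi_t^{s+p}$. This runs forward to the next base time and back along the invertible reduced flow. Even then, the Lipschitz-submanifold property of its fibres is easiest to get by identifying them with the fibres from \cite{chen1997invariant} at time $t$.

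\textbf{Smaller point: finite dimensionality.} Finite dimensionality of $Y^\beta_\Sigma(s)$ does not follow from \ref{A1}--\ref{A7}. Compact resolvent is lost at $\alpha=1$; for $B=2\mu A$ the paper itself notes infinite-dimensional slow spectral subspaces. So for $\beta>0$ you are tacitly adding the hypothesis of Theorem~\ref{thm:main}, and the same restriction is implicit in the paper's sketch. For $\alpha<1$, argue via compactness of $\Phi(s)$ rather than ``perturbation'', which only covers small $\xi_*$. The operator $e^{\mathcal{L}_B p}$ is compact, and the Duhamel correction is a norm limit of compact operators, so $\Phi(s)$ is compact. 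Hence its spectrum in $\{|\lambda|\ge\gamma\}$, $\gamma>0$, consists of finitely many eigenvalues of finite multiplicity.

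\textbf{Smaller point: the decay rate.} The fibre contraction rate delivered by \cite{chen1997invariant} is $\gamma^-$ plus a correction controlled by the Lipschitz constant of the cut-off nonlinearity. It drops below $\gamma$ once the cutoff is small enough, provided $\gamma^-<\gamma$. A rate $\gamma+\delta$ with $\delta>0$, as you wrote it, cannot be tuned down to $\gamma$.
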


\begin{proof}
    The steps required to extend a result such as Theorem~\ref{thm:main} to the time-periodic case are well understood and fairly standard, we merely outline them here.
    One considers the Poincaré map $\varphi_t^{t+p}$ on a neighborhood of the origin (see \cite[Section~8.4]{henry1981geometric}).
    The assumptions permit the application of \cite[Theorem~3.1]{chen1997invariant} to  $\varphi_t^{t+p}$, which yields a $C^\ell$ manifold $W^\Sigma_t$ invariant under $\varphi_t^{t+p}$. 
    The obtained manifolds can be glued together for all $t \in \mathbb{R}$ to form a continuous (in time) family of $C^\ell$ manifolds $t \mapsto W_t^\Sigma$ that satisfy $\varphi_{s}^t(W_{s}^\Sigma) \subset W_t^\Sigma$, just as in the proof of \cite[Theorem~2]{abbasciano2026data2}.
\end{proof}

\section{Computation of SSMs for PDEs}
\label{sect:comp}

In this section, we describe a general procedure to obtain approximations, through analytical means, to  the manifolds $W^\Sigma$ obtained in Theorem~\ref{thm:main}.
As a preliminary step, we require detailed information about spectral properties of the linear part.
We restrict ourselves to the setting when $B = 2\mu A^\alpha$, $\alpha \in [0,1]$, by reasons motivated in the following sections, specifically Section~\ref{sect:beam_damping}.
For ease of notation, and since this is the most important case, we shall also only detail the case when the slow spectral subset $\Sigma$ consists of a single pair of complex conjugate eigenvalues, but the methodology could be extended to cover larger spectral subsets in a straightforward manner.

\subsection{Spectral properties of \texorpdfstring{$\mathcal{L}_{2\mu A^\alpha}$}{}}
\label{sect:spectrum}

We recollect some results from \cite[Appendix A, Lemmas A.1 and A.2]{chen1989proof} that are pertinent to our calculations.

We shall assume throughout this section $\alpha \in [0,1]$, $ \mu \in (0,1)$, with the most relevant case for our subsequent applications herein being $\alpha = \frac12$.
Let $\{\sigma_n\}_{n \in \mathbb{N}} \subset \mathbb{R}$ denote the eigenvalues of $A$ ordered in ascending order (all assumed to be simple\footnote{The assumption of simplicity of eigenvalues is not a strict requirement here; it is assumed merely for convenience and ease of exposition.}), and let $\{\phi_n\}_{n \in \mathbb{N}}$ denote the corresponding eigenvectors, normalized according to \eqref{eq:phi_normal}.

For $\alpha < 1$, the operator $\mathcal{L}_{2\mu A^\alpha}$ has compact resolvent.
The eigenvalues $\{ \lambda^\pm_n\}_{n \in \mathbb{N}}$ of $\mathcal{L}_{2\mu A^\alpha}$ are the solutions of
\begin{equation}
    \lambda^2 + 2\mu \sigma_n^\alpha \lambda + \sigma_n = 0, \qquad \text{that is,} \qquad \lambda^\pm_n = \left(-\mu \pm \sqrt{\mu^2-\sigma_n^{1-2\alpha}} \right) \sigma_n^\alpha.
    \label{eq:lambdas}
\end{equation}
The corresponding normalized eigenvectors $\{ \psi_n^\pm\}_{n \in \mathbb{N}}$ of $\mathcal{L}_{2\mu A^\alpha}$  on $Y$ are 
\begin{displaymath}
    \psi_n^+ = \begin{pmatrix}
        \phi_n \\ \lambda_n^+ \phi_n
    \end{pmatrix}, \qquad
    \psi_n^- = k_n\begin{pmatrix}
        \phi_n \\ \lambda_n^- \phi_n
    \end{pmatrix},
\end{displaymath}
where $k_n^2 = \frac{\sigma_n+|\lambda^+_n|^2}{\sigma_n+|\lambda^-_n|^2}$ (for $\alpha = \frac12$ we have $k_n =1$ for all $n$), and the $\phi_n$ are normalized according to
\begin{equation}
    (\sigma_n+|\lambda^+_n|^2) \Vert \phi_n\Vert_X^2 = 1, 
    \label{eq:phi_normal}
\end{equation}
so that $\Vert \psi_n^\pm \Vert_Y = 1$.

Suppose now that $\mu^2 \neq \sigma_n^{1-2\alpha}$ for all $n \in \mathbb{N}$ (for $\alpha = \frac12$ this is given).
The eigenvectors of the adjoint $\mathcal{L}_{2 \mu A^\alpha}^*$ are
\begin{displaymath}
\psi^{*-}_m = \frac{1}{v_m^-} \begin{pmatrix}
        \phi_m \\ - \overline{\lambda_m^+} \phi_m
    \end{pmatrix}, \qquad
    \psi^{*+}_m = \frac{1}{v_m^+} \begin{pmatrix}
        \phi_m \\ - \overline{\lambda_m^-} \phi_m
    \end{pmatrix},
\end{displaymath}
corresponding to the eigenvalues $\overline{\lambda_m^+}$ and $\overline{\lambda_m^-}$, where
\begin{displaymath}
    v_m^+ = \frac{\sigma_m - (\overline{\lambda_m^-})^2}{\sigma_m + |\lambda_m^-|^2}, \qquad v_m^- = \frac{\sigma_m - (\overline{\lambda_m^+})^2}{\sigma_m + |\lambda_m^+|^2}.
\end{displaymath}
The $\{\psi^{*\pm}_m\}_{m \in \mathbb{N}}$ form a bi-orthogonal system with respect to the eigenvectors $\{\psi^\pm_m \}_{m \in \mathbb{N}}$, i.e.,\footnote{When complexifying $Y$, we take the second entry of $\langle \cdot,\cdot\rangle_Y$ to be anti-linear.}
\begin{align*}
    \langle \psi_m^{*-},\psi_n^{+} \rangle_Y &= \langle \psi_m^{*+},\psi_n^{-} \rangle_Y = \delta_{nm}, \\
    \langle \psi_m^{*+},\psi_n^{+} \rangle_Y &= \langle \psi_m^{*-},\psi_n^{-} \rangle_Y = 0, \qquad  n,m \in \mathbb{N}.
\end{align*}
Any $\xi \in Y$ can be written as
\begin{equation}
    \xi = \sum_{n=1}^\infty \langle \xi , \psi^{*-}_n \rangle_Y \psi_n^+ + \sum_{n=1}^\infty \langle \xi , \psi^{*+}_n \rangle_Y \psi_n^-,
    \label{eq:y_expansion}
\end{equation}
with sums converging in the norm $\Vert \cdot \Vert_Y$.

The sums in \eqref{eq:y_expansion} converge also in the $\Vert \cdot \Vert_{\mathcal{L}_{2 \mu A^\alpha}}$ norm for $\xi \in \dom(\mathcal{L}_{2 \mu A^\alpha})$, as seen from considering the expansion for $\mathcal{L}_{2 \mu A^\alpha} \xi \in Y$ according to \eqref{eq:y_expansion}
and noting that $\Vert \cdot \Vert_{\mathcal{L}_{2 \mu A^\alpha}}$ is equivalent to $\Vert \mathcal{L}_{2 \mu A^\alpha} \cdot \Vert_Y$.

\subsection{SSM parameterization}
\label{sect:param}

In solving the invariance equation defining the manifold, we follow closely the exposition of \cite{kogelbauer2018rigorous}, which in turn follows the guidelines placed by the parameterization method \cite{Cabre2003a}.
In particular, we shall solve the invariance equation,
\begin{equation}
    (\mathcal{L}_{2 \mu A^\alpha} + \mathcal{F}) \circ K = DK[R],
    \label{eq:invariance_param}
\end{equation}
up to a finite polynomial order,
for the unknown embedding $K: \Theta \to Y^\beta$ of the reduced space $\Theta$ (a finite-dimensional manifold) into the phase space $Y^\beta$; and for $R$, a vector field on $\Theta$ defining the reduced dynamics.
In our setting, we shall take $\Theta = Y^\beta_\Sigma$ and $K : Y^\beta_\Sigma \to Y^\beta$, where a manifold $W^\Sigma$ obtained from Theorem~\ref{thm:main} is envisioned as an image of a subset about the origin under $K$.

Equation \eqref{eq:invariance_param} carries with it a degree of freedom in that the reduced space is only defined up to a diffeomorphism.
For the details, we refer to \cite{haro2016parameterization}.
We will exploit this degree of freedom to insist that the reduced dynamics are modeled on $\Theta = \mathbb{C}$ instead of $Y^\beta_\Sigma$ 
and to
impose the form of the reduced dynamics as \eqref{eq:R_form} later on.

While the proof of existence of invariant manifolds with parameterization method hinges on the invertibility of $\mathcal{L}_{2 \mu A^\alpha}$, equation \eqref{eq:invariance_param} itself is solvable to finite polynomial order in the present setting, considering that each coefficient (of polynomial order $k$) will satisfy $\mathrm{im}(K^k) \subset \dom(\mathcal{L}_{2 \mu A^\alpha})$ recursively.
Moreover, the manifolds $W^\Sigma$ obtained via Theorem~\ref{thm:main} also satisfy $W^\Sigma \subset \dom(\mathcal{L}_{2 \mu A^\alpha})$.
In fact, it can be shown,
in a fashion analogously to \cite[Section~5]{buza2025existence}, that if Theorem~\ref{thm:main}\ref{thm1st2} holds with $\ell$, then the coefficients $K^k$\footnote{In comparison to \eqref{eq:K_expansion} below, $K^k(z,\overline{z}) = \sum_{|n| = k} K_n(z,\overline{z})^n$.} obtained from \eqref{eq:invariance_param} coincide with the Taylor coefficients of $W^\Sigma$ up to order $\ell$, expanded about $0$ as a graph over $Y^\beta_\Sigma$ (up to diffeomorphism invariance).

We begin by declaring $\Sigma := \{ \lambda_1^+,\lambda_1^- \}$ to be the spectral subset composed of the dominant modes, with $\lambda_n^\pm$ as in \eqref{eq:lambdas}.
The (complexified) spectral subspace associated to $\Sigma$ is 
\begin{displaymath}
    (Y^\beta_\Sigma)_\mathbb{C} = \mathrm{span}_\mathbb{C} \left\{ \psi_1^+ ,\psi_1^-\right\}.
\end{displaymath}
Given that $\psi_1^+$ and $\psi_1^-$ are complex conjugates to one another, if one defines the isomorphism $\varpi:\mathbb{C}^2 \to Y^\beta_\Sigma$ by $(z_1,z_2) \mapsto z_1\psi_1^+  + z_2 \psi_1^-$, then the real spectral subspace $Y^\beta_\Sigma$ is given by $\varpi(\Delta_c)$, where $\Delta_c := \{ (z,\overline{z}) \in \mathbb{C}^2 \; | \; z \in \mathbb{C} \}$.

Under the assumptions of Theorem~\ref{thm:main}, \eqref{eq:invariance_param} admits a $C^\ell$ solution pair $[\widetilde{K},\widetilde{R}]$ for $\Theta = Y^\beta_\Sigma$. 
For this expository section, we simply assume a spectral ratio of $\ell \geq 3$ in part \ref{thm1st2} of Theorem~\ref{thm:main}.
Rather than seeking the real embedding $\widetilde{K}$, we instead seek
\begin{displaymath}
    K: \Delta_c \xrightarrow{\varpi|_{\Delta_c}} Y^\beta_\Sigma \xrightarrow{\widetilde{K}} Y^\beta
\end{displaymath}
directly.
We approximate $K$ by Taylor expanding $\widetilde{K}$ up to order $\ell$, which yields
\begin{equation}
    K(z,\overline{z}) = \sum_{|n| =1}^\ell K_n (z,\overline{z})^n + o(|z|^\ell)
    \label{eq:K_expansion}
\end{equation}
as $|z| \to 0$,
for $n=(n_1,n_2) \in \mathbb{N}^2$ and coefficients $K_n \in (Y^\beta)_\mathbb{C}$.
These coefficients must satisfy $K_{(n_1,n_2)} = \overline{K}_{(n_2,n_1)}$.
For our calculations, it will also be convenient to extend $K^{\leq \ell}$ to $\hat{K}^{\leq \ell}:\mathbb{C}^2 \to (Y^\beta)_\mathbb{C}$;\footnote{The notation $(\cdot)^{\leq \ell}$ refers to the $\ell$-Taylor jet.} equation \eqref{eq:invariance_param} preserves the real structure so a subsequent restriction $K^{\leq \ell}=\hat{K}^{\leq \ell}|_{\Delta_c}$ will be real -- in particular, the coefficients $K_n$ must be the same for both maps.

We impose, once more using diffeomorphism invariance, the Hopf normal form for the reduced dynamics, so as to avoid working with large coefficients $K_{(1,2)}$, $K_{(2,1)}$ due to the near resonances $2\lambda_1^++\lambda_1^- \approx \lambda_1^+$ and $2\lambda_1^++\lambda_1^- \approx \lambda_1^+$, just as in \cite[page 1122]{kogelbauer2018rigorous}.\footnote{These near resonances occur when the damping is weak ($\mu \ll 1$), which is most often the case in real-world mechanical systems.}
In particular, we take for the reduced dynamics $\hat{R}$, a vector field on $\mathbb{C}^2$,
\begin{equation}
     \hat{R}(z_1,z_2) = \begin{pmatrix}
        \lambda_1^+ z_1 +R_0 z_1^2z_2 \\
        \lambda_1^- z_2 +\overline{R}_0 z_1z_2^2
    \end{pmatrix},
    \label{eq:R_form}
\end{equation}
for some $R_0 \in \mathbb{C}$ to be determined.

Inserting (the extension of) \eqref{eq:K_expansion} and \eqref{eq:R_form} into \eqref{eq:invariance_param} and truncating at order $\ell$, we obtain
\begin{multline}
    \sum_{|n| =1}^\ell \mathcal{L}_{2 \mu A^\alpha} K_n (z_1,z_2)^n - \sum_{|n| =1}^\ell (\lambda_1^+n_1+ \lambda_1^- n_2) K_n(z_1,z_2)^n  \\
    =- (\mathcal{F} \circ K)^{\leq \ell}(z_1,z_2)   + \sum_{|n| = 3}^\ell \big( R_0 (n_1-1) + \overline{R}_0 (n_2-1)\big) K_{n-(1,1)}(z_1,z_2)^n.
    \label{eq:invariance_ordered}
\end{multline}
(In the last term of \eqref{eq:invariance_ordered}, the coefficients $K_n$ with $n_1 <0$ or $n_2<0$ are taken to be $0$.)

Equation \eqref{eq:invariance_ordered} can now be solved recursively with respect to increasing polynomial order.
At order one, \eqref{eq:invariance_ordered} reduces to the eigenvalue problems
\begin{subequations} \label{eq:1storder}
\begin{align}
    \mathcal{L}_{2 \mu A^\alpha} K_{(1,0)} &= \lambda_1^+ K_{(1,0)}, \\
    \mathcal{L}_{2 \mu A^\alpha} K_{(0,1)} &= \lambda_1^- K_{(0,1)}.
\end{align}
\end{subequations}
Although technically \eqref{eq:1storder} admits a degree of freedom in rescaling the eigenvectors, we shall always take as solutions $K_{(1,0)} = \psi_1^+$, $K_{(0,1)} = \psi_1^-$ the unit norm eigenvectors.

At subsequent orders, the analysis depends on the specific problem at hand, due to the presence of the term $(\mathcal{F} \circ K)^{\leq \ell}(z_1,z_2)$ in \eqref{eq:invariance_ordered}, and hence is best described directly through the course of examples (see Sections \ref{sect:beam_mfd} and \ref{sect:plate_computation}).
Here, we merely outline the general procedure.
At order two, we have
\begin{subequations} \label{eq:2ndorder}
\begin{align}
    &\big[\mathcal{L}_{2 \mu A^\alpha} -  2\lambda_1^+ \big] K_{(2,0)} = - \frac12 D^2 \mathcal{F}(0)[\psi_1^+,\psi_1^+], \label{eq:2ndorder_first} \\
    &\big[\mathcal{L}_{2 \mu A^\alpha}- (\lambda_1^- + \lambda_1^+) \big]K_{(1,1)} = -  D^2 \mathcal{F}(0)[\psi_1^+,\psi_1^-], \\
    &\big[\mathcal{L}_{2 \mu A^\alpha} - 2\lambda_1^- \big] K_{(0,2)} = - \frac12 D^2 \mathcal{F}(0)[\psi_1^-,\psi_1^-].
\end{align}
\end{subequations}
Due to our standing assumption on $\ell$, the operators on the left hand sides are all invertible as linear maps $\dom(\mathcal{L}_{2 \mu A^\alpha}) \to Y$.
We project \eqref{eq:2ndorder_first} to each direction in the bi-orthogonal system recalled in Section~\ref{sect:spectrum}, to obtain, for $m \in \mathbb{N}$,
\begin{align*}
    \left\langle \big[\mathcal{L}_{2 \mu A^\alpha} -  2\lambda_1^+ \big] K_{(2,0)}  ,\psi_m^{*-} \right\rangle_Y &= \left\langle  K_{(2,0)},\mathcal{L}_{2 \mu A^\alpha}^* \psi_m^{*-} \right\rangle_Y - 2 \lambda_1^+\left\langle  K_{(2,0)},\psi_m^{*-} \right\rangle_Y \\
    &= \left\langle  K_{(2,0)},\overline{\lambda_m^+} \psi_m^{*-} \right\rangle_Y - 2 \lambda_1^+\left\langle  K_{(2,0)},\psi_m^{*-} \right\rangle_Y \\
    &= (\lambda_m^+ - 2 \lambda_1^+)\left\langle  K_{(2,0)},\psi_m^{*-} \right\rangle_Y \\
    &= -\left\langle  \frac12 D^2 \mathcal{F}(0)[\psi_1^+,\psi_1^+],\psi_m^{*-} \right\rangle_Y,
\end{align*}
which, combined with an analogous computation after applying $\langle \cdot, \psi_m^{*+} \rangle_Y$ to \eqref{eq:2ndorder_first}, yields
\begin{align*}
    &\left\langle  K_{(2,0)},\psi_m^{*-} \right\rangle_Y = \frac{1}{2 \lambda_1^+ - \lambda_m^+}\left\langle  \frac12 D^2 \mathcal{F}(0)[\psi_1^+,\psi_1^+],\psi_m^{*-} \right\rangle_Y, \\
    &\left\langle  K_{(2,0)},\psi_m^{*+} \right\rangle_Y = \frac{1}{2 \lambda_1^+ - \lambda_m^-}\left\langle  \frac12 D^2 \mathcal{F}(0)[\psi_1^+,\psi_1^+],\psi_m^{*+} \right\rangle_Y.
\end{align*}
The coefficient $K_{(2,0)}$ can then be reconstructed via the expansion \eqref{eq:y_expansion}, 
\begin{displaymath}
    K_{(2,0)} = \sum_{n=1}^\infty \langle K_{(2,0)} , \psi^{*-}_n \rangle_Y \psi_n^+ + \sum_{n=1}^\infty \langle K_{(2,0)}, \psi^{*+}_n \rangle_Y \psi_n^-,
\end{displaymath}
an expression that converges in $\dom(\mathcal{L}_{2 \mu A^\alpha})$.

At order three, completely analogous computations ensue, now with the additional term  depending on $R_0$ (the final one in \eqref{eq:invariance_ordered}) added to the right hand side of the equations for $K_{(2,1)}$ and $K_{(1,2)}$.

\subsection{Backbone curves}
\label{sect:backbone}

In structural vibrations, the damped natural frequencies, $\mathrm{Im}(\lambda_n^+)$, are the primary objects of study, due to their role in identifying the forcing regimes under which the structure, as modeled by a linear system, is most prone to large displacements.
For nonlinear systems, identifying these regimes is far more challenging of a task, since 
the critical forcing frequencies (i.e., for which a local maximum of response amplitude is attained) now depend on the forcing amplitude.
The curves describing this dependence in the frequency -- response amplitude plane are usually termed \textit{backbone curves}.
They each correspond to a natural frequency of the linear system, in that the curves emanate from the points $(\mathrm{Im}(\lambda_n^+),0)$ attained in the zero-amplitude limit, $n \in \mathbb{N}$.
Of these, the most damaging is the least damped one, with $n =1$.

SSMs provide a convenient way to interpret backbone curves \cite{breunung2018explicit}.
To describe this in the present setting at the level of autonomous dynamics, let us transform the reduced dynamics \eqref{eq:R_form} to polar coordinates, by letting $z_1 = re^{i \theta}$, which gives
\begin{align*}
    \dot{r} &= \mathrm{Re} (\lambda_1^+)r + \mathrm{Re} (R_0)r^3,\\
    \dot{\theta} &=\mathrm{Im} (\lambda_1^+) + \mathrm{Im}(R_0)r^2=:\vartheta(r).
\end{align*}
The backbone curve for the dominant frequency $\mathrm{Im}(\lambda_1^+)$ is defined as the
image of the map $\widetilde{\mathcal{B}}: [0,\infty)_r \to \mathbb{R} \times [0,\infty)$ given by
\begin{equation}
    \widetilde{\mathcal{B}}(r) = \big(\vartheta(r),r\big).
    \label{eq:backbone0}
\end{equation}
Defined this way, it is shown in \cite[Section~4]{breunung2018explicit} that this curve  in fact corresponds to the frequency-peak amplitude curve under the addition of forcing.
The particular forcing, however, generally necessitates $r$ (and the SSM) to be re-defined.
Nonetheless, this provides an interpretation of backbone curves  purely based on the autonomous dynamics, i.e., intrinsic to the structure at hand.

For practical purposes, it is more convenient to replace \eqref{eq:backbone0} with
\begin{equation}
    {\mathcal{B}}(r) = \big(\vartheta(r),\mathrm{Amp}(r)\big),
    \label{eq:backbone}
\end{equation}
for some physically relevant/observable quantity $\mathrm{Amp}$, here taken to be (from \cite{kogelbauer2018rigorous})
\begin{equation}
    \mathrm{Amp}(r) = \sqrt{\frac{1}{2\pi} \int_0^{2\pi}\left\Vert K(re^{i\theta},re^{-i\theta}) \right\Vert_Y^2 d\theta}.
    \label{eq:Amp}
\end{equation}
The addition of forcing will perturb $K$ and hence $\mathrm{Amp}$ slightly (see Section~\ref{sect:beam_forcing}).

\section{Example 1: SSM reduction of an Euler-Bernoulli beam}
\label{sect:beam}

Consider a finite length elastic beam modeled on the interval $[0,l]$.
Let $\rho>0$ denote its mass density per unit length, and let $EI>0$ denote the second moment of the modulus of elasticity.
We assume both of these to be constant for the sake of convenience, as it makes the exact identification of $\dom(A^\frac12)$ below easier.
Let $X := L^2([0,l])$ with $\langle w,v\rangle_X := \langle w, \rho v \rangle_{L^2([0,l])} $.

 In what follows, we shall survey several damping types (Section~\ref{sect:beam_damping}) and nonlinearities (Section~\ref{sect:beam_nonlin}) to gauge the range of models that fit our setup described in Section~\ref{sect:setup}.
In the end, we only perform explicit calculations in Section~\ref{sect:beam_mfd} for a single nonlinear model, which is described in Section~\ref{sect:beam_final_model} (the reader interested only in the physical application may immediately skip to these sections).

\subsection{Elastic operator, boundary conditions}
\label{sect:beam_A}

Let $A : \dom(A) \to X$ denote the elastic operator resulting from Euler-Bernoulli beam theory:
\begin{equation}
    Aw = \frac{1}{\rho} \frac{\partial^2}{\partial x^2} \left( EI \frac{\partial^2 w}{\partial x^2} \right),
    \label{eq:elastic_A}
\end{equation}
where 
\begin{equation}
    \dom(A) = \left\{ w \in H^4([0,l]) \; \Big| \; \eta(w) = 0, \; \forall \eta \in  F \subset H^{-4}([0,l])  \right\},
    \label{eq:dom(A)_beam}
\end{equation}
with 
$F \subset H^{-4}([0,l])$ denoting the set of boundary conditions, i.e., a four-element subset of bounded linear functionals on $H^4([0,l])$ 
for which $A$ is positive self-adjoint:
\begin{equation}
    \langle v,Aw \rangle_X - \langle Av,w \rangle_X = 0, \qquad  \qquad \langle w ,A w \rangle_X > 0, \qquad  v,w \in \dom(A).
    \label{eq:SAcond}
\end{equation}
Written out, these translate to
\begin{subequations} \label{eq:conds_on_BCs}
\begin{align}
    \left[ v \frac{\partial}{\partial x} \left( EI \frac{\partial^2w}{\partial x^2} \right) -  \frac{\partial v }{\partial x} EI \frac{\partial^2w}{\partial x^2}  \right]^l_0 - \left[ w\frac{\partial}{\partial x} \left( EI \frac{\partial^2v}{\partial x^2} \right) -  \frac{\partial w }{\partial x} EI \frac{\partial^2v}{\partial x^2}  \right]^l_0 &= 0, \\
    \left[ w \frac{\partial}{\partial x} \left( EI \frac{\partial^2w}{\partial x^2} \right) -  \frac{\partial w }{\partial x} EI \frac{\partial^2w}{\partial x^2}  \right]^l_0 + \int_0^l EI \left| \frac{\partial^2w}{\partial x^2}\right|^2 dx &> 0 .\label{eq:SAcond_2}
\end{align}
\end{subequations}
For instance, as a sample set of boundary conditions, we may impose
\begin{subequations} \label{eq:BCs_beam}
    \begin{align}
        &w(t,0) = 0, & & \frac{\partial^2 w}{\partial x^2}(t,0) = 0, \\
        & w(t,l) = 0, & & \frac{\partial^2 w}{\partial x^2}(t,l) = 0,
    \end{align}
\end{subequations}
which stands for a hinged beam.
This ensures both conditions in \eqref{eq:conds_on_BCs} simultaneously.
Note that if the affine functions are not automatically excluded by the boundary condition but the first term in \eqref{eq:SAcond_2} vanishes, then one could quotient $X$ out by the affine functions to ensure strict inequality holds in \eqref{eq:SAcond_2} and continue the analysis in the exact same way as will be done below.

The undamped, unforced, linear beam equation is simply
\begin{displaymath}
    \frac{d^2w}{dt^2}  + Aw = 0,
\end{displaymath}
a conservative system that generates a strongly continuous group on $Y = \dom(A^\frac12) \times X$, as mentioned in Section~\ref{sect:setup}.
The identification of  $A^{\frac12}$ and $\dom(A^\frac12)$ is a nontrivial task, one that depends in a peculiar manner on the exact boundary conditions imposed in $F$.
For fourth-order differential operators, such as the elastic operator \eqref{eq:elastic_A}, this was carried out in \cite{russell1988positive}.

Under the assumption 
\begin{equation}
    \left[ w \frac{\partial}{\partial x} \left( EI \frac{\partial^2w}{\partial x^2} \right) -  \frac{\partial w }{\partial x} EI \frac{\partial^2w}{\partial x^2}  \right]^l_0=0, \qquad  w \in \dom(A),  \tag{A.8}
    \label{eq:ass_dom12}
\end{equation}
we have 
\begin{equation}
    \dom(A^\frac12) = H^{2}([0,l]) \cap \left( \bigcap_{\eta \in G} \ker(\eta) \right),
    \label{eq:domA12}
\end{equation}
where $G := F \cap H^{-2}([0,l])$.
For a proof of this assertion, see Appendix~\ref{appendix:domA12}.

\subsection{The addition of internal damping to the model -- construction of the analytic semigroup} 
\label{sect:beam_damping}

There is no broad agreement in the scientific community on how internal damping mechanisms in mechanical systems should be modeled. 
For an in-depth review specifically in the context of beams, we refer the reader to \cite{russell1992mathematical}.
In the simplest cases, one insists that the linear model should satisfy
\begin{equation}
    \frac{d^2w}{dt^2} + 2 \mu A^\alpha \frac{d w}{dt} + Aw = 0
    \label{eq:Aalphabeammodel}
\end{equation}
for some $\alpha \in [0,1]$ and $0 < \mu \ll 1$.
Here, $\alpha = 0$ is the simplistic model usually associated with external damping (Figure~\ref{fig:1b}),
$\alpha = 1$ stands for the Kelvin-Voigt model (Figure~\ref{fig:1d}), whereas $\alpha= \frac12$ corresponds to the scenario of structural damping, which postulates a linear relationship between frequency and damping (Figure~\ref{fig:1c}).
The cases $\alpha < \frac12$ are not permitted in our setting; in fact, it is shown in \cite[Section~2]{chen1989proof} that in these cases the generated semigroup is not analytic.

\begin{figure*}
    \centering
    \begin{subfigure}[b]{0.495\textwidth}
        \centering
{\phantomsubcaption\label{fig:1a}}
  \tikz\node[inner sep=0pt,label={[anchor=north west]north west:\subref{fig:1a}}] {\includegraphics[width=\textwidth]{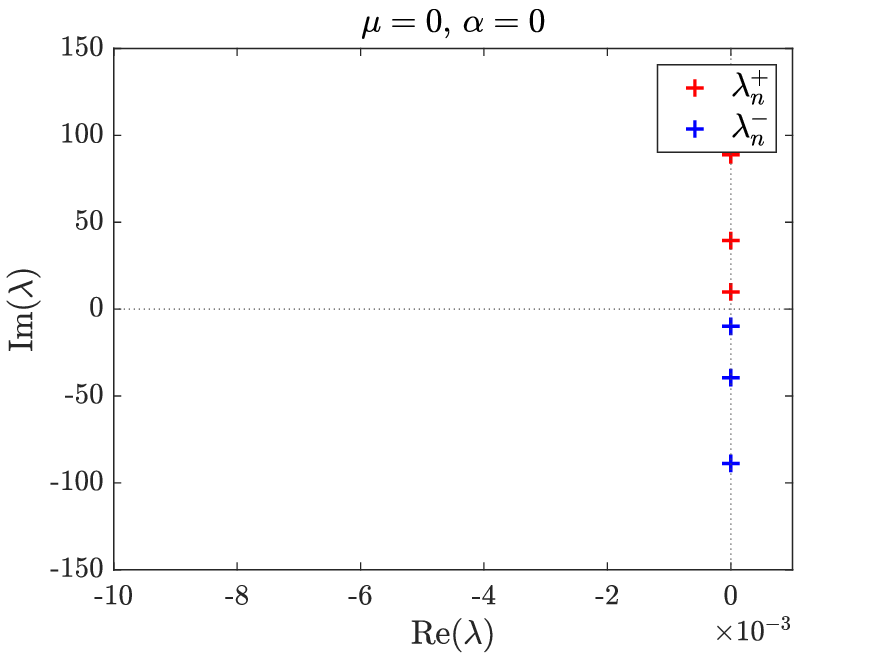}};
    \end{subfigure}
    \hfill
    \begin{subfigure}[b]{0.495\textwidth}  
        \centering 
{\phantomsubcaption\label{fig:1b}}
  \tikz\node[inner sep=0pt,label={[anchor=north west]north west:\subref{fig:1b}}] {\includegraphics[width=\textwidth]{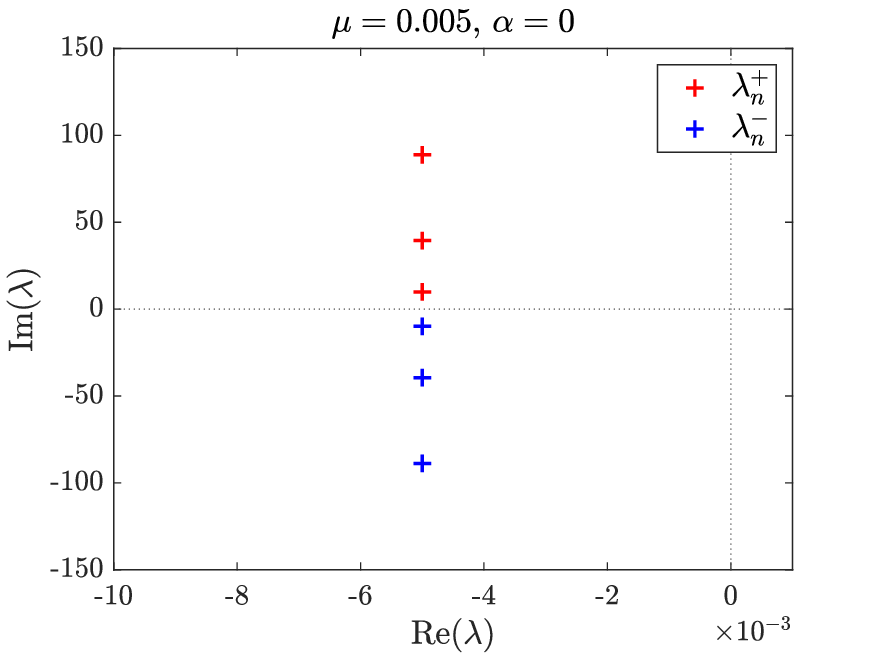}};
    \end{subfigure}
    \vskip\baselineskip
    \begin{subfigure}[b]{0.495\textwidth}  
    \centering
{\phantomsubcaption\label{fig:1c}}
  \tikz\node[inner sep=0pt,label={[anchor=north west]north west:\subref{fig:1c}}] {\includegraphics[width=\textwidth]{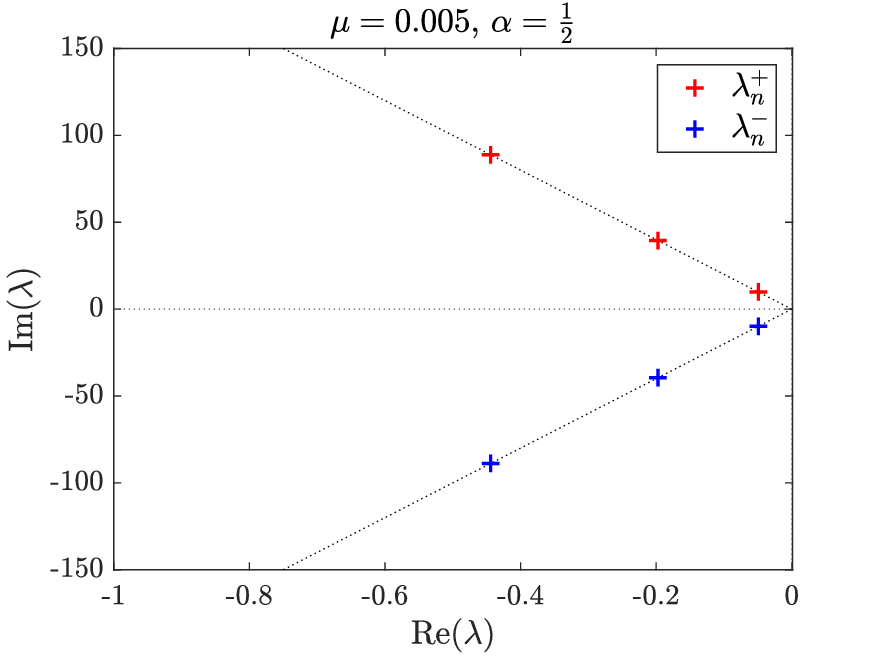}};
    \end{subfigure}
    \hfill
    \begin{subfigure}[b]{0.495\textwidth}   
        \centering 
{\phantomsubcaption\label{fig:1d}}
  \tikz\node[inner sep=0pt,label={[anchor=north west]north west:\subref{fig:1d}}] {\includegraphics[width=\textwidth]{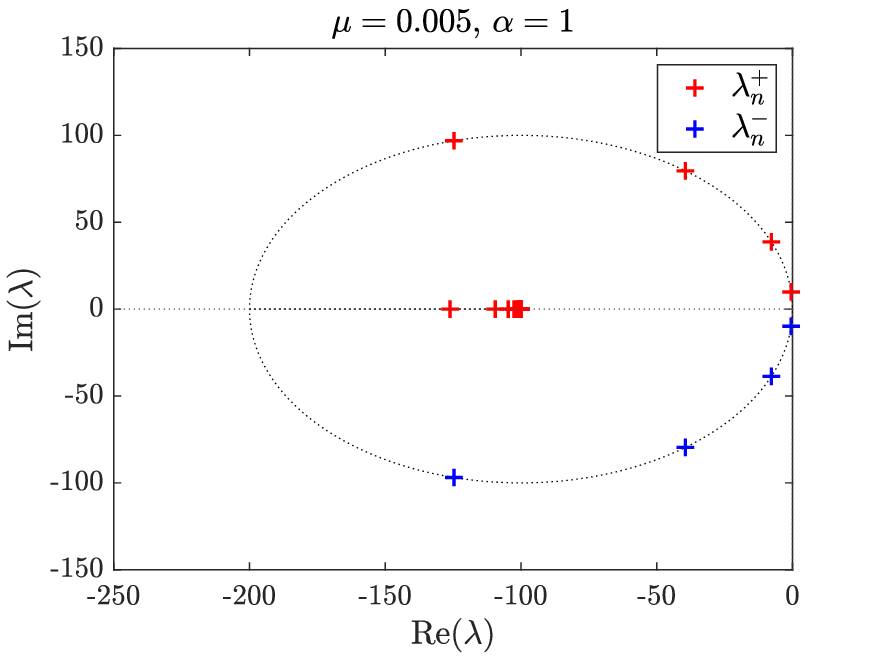}};
    \end{subfigure}
    \caption{Side-by-side comparison of the dominant part of spectra for different values of $\mu$ and $\alpha$, computed for a sample set of parameter values and boundary conditions, namely $EI/\rho = 1$ and $l = 1$ with \eqref{eq:BCs_beam}.
    We remark that the spectra appearing in panels \ref{fig:1c}-\ref{fig:1d} could not be treated by the methodology in \cite{kogelbauer2018rigorous}.}
    \label{fig:1}
\end{figure*}

The physically most appealing  of these is the case $\alpha = \frac12$. 
As already mentioned, the identification of $A^\frac12$ is, however, generally nontrivial.
For the operator $A$ described by \eqref{eq:elastic_A}-\eqref{eq:SAcond},
Theorem~3 of \cite{russell1988positive} asserts that there exists a bounded, invertible operator $P$ on $L^2([0,l])$ such that $PA^\frac12w = -\frac{EI}{\rho}\partial_{xx}w$ for all $w \in \dom(A)$ (see Theorem~\ref{thm:P}).
Under assumption \eqref{eq:ass_dom12}, $P$ is an isometry, and, in general, $P^2 = P$ holds.
If one further assumes that the boundary conditions are such that the eigenvectors of $A$ contain no exponential terms and consist merely of trigonometric functions, then $P = \mathrm{id}_{L^2([0,l])}$ (compare with \cite[equations (1.10)-(1.16)]{russell1988positive}).
The sample boundary conditions listed in \eqref{eq:BCs_beam} fall into this scenario, thus the structural damping scenario in this case yields the linear model
\begin{equation}
     \frac{\partial^2w}{ \partial t^2} -\frac{2 \mu EI}{\rho} \frac{\partial^3 w}{\partial x^2 \partial t} + \frac{EI}{\rho}  \frac{\partial^4 w}{\partial x^4} =0.
     \label{eq:beameq_A12_equal_D2}
\end{equation}

A larger class of damping scenarios are permitted in our setting: Namely, $2\mu A^\alpha$ in \eqref{eq:Aalphabeammodel} could be replaced by an operator $B$ satisfying \ref{A2}.
In \cite{russell1992mathematical}, the damping operator 
\begin{equation}
    [Bw](x) = -2 \mu \frac{\partial}{\partial x} \int_0^l g(x-\xi) [w'(x) - w'(\xi)] d\xi
    \label{eq:B_russell}
\end{equation}
is suggested as a modification to classical structural damping, in order to recover physical meaning behind the  term whenever $P \neq \mathrm{id}_{L^2([0,l])}$.
Under reasonable, physically sound assumptions on $g$ (symmetry and sufficient smoothness) and the boundary conditions in $G$, the damping operator \eqref{eq:B_russell} satisfies assumption \ref{A2} and hence fits in our setting.

\subsubsection{Construction of the analytic semigroup}

As in Section~\ref{sect:setup}, we set the phase space $Y$ to be $\dom(A^\frac12) \times X$ equipped with the energy norm \eqref{eq:norm_E}.
The operator 
\begin{equation}
    \mathcal{L}_B = \begin{pmatrix}
        0 & \mathrm{id}_X \\
        -A & -B
    \end{pmatrix}
    \label{eq:L_B}
\end{equation}
is a sectorial operator that generates an analytic semigroup $t \mapsto e^{\mathcal{L}_B t}$ on $Y^\beta$ for all $\beta \in [0,1)$.
As mentioned earlier, $\mathcal{L}_B$ has compact resolvent when $B$ satisfies \ref{A2} with $\alpha \in [\frac12,1)$ -- we thus have no issue defining spectral projections as we choose.

Recall from Section~\ref{sect:spectrum} (see also Figure~\ref{fig:1}) that eigenvalues $\{ \lambda^\pm_n\}_{n \in \mathbb{N}}$ of $\mathcal{L}_{2\mu A^\alpha}$ for $\alpha \geq 0$ are the solutions of
\begin{displaymath}
    \lambda^2 + 2\mu \sigma_n^\alpha \lambda + \sigma_n = 0, \qquad \text{that is,} \qquad \lambda^\pm_n = \left(-\mu \pm \sqrt{\mu^2-\sigma_n^{1-2\alpha}} \right) \sigma_n^\alpha,
\end{displaymath}
where $\{\sigma_n\}_{n \in \mathbb{N}} \subset \mathbb{R}$ are the eigenvalues of $A$ ordered in ascending order.

For $\alpha > \frac12$, the limiting eigenvalues of $\mathcal{L}_{2 \mu A^\alpha}$ (as $n \to \infty$) turn real.
These modes are usually termed \textit{overdamped modes}.
For $\alpha \in (\frac12,1)$, these overdamped mode-pairs $\lambda_n^\pm$ tend to $-\infty$. 
In the Kelvin-Voigt limit ($\alpha = 1$), the eigenvalue sequence $\lambda_n^+$ tends to $-\frac{1}{2\mu}$ ($\lambda_n^- \to - \infty$ still), hence $-\frac{1}{2\mu} \in \sigma_c(\mathcal{L}_{2\mu A})$ and $\mathcal{L}_{2\mu A}$ does not have compact resolvent.
This is not strictly necessary to apply Theorem~\ref{thm:main}, however, so long as we can define the desired projections.
For the case of $\mathcal{L}_{2\mu A}$, the projections associated to a spectral subset $\Sigma$ of the form \ref{A4} can still be defined for any $\gamma < \sup \mathrm{Re} \, \sigma(\mathcal{L}_{2\mu A})$, $\gamma \neq -\frac{1}{2\mu}$, but for $\gamma < -\frac1{2\mu}$ the spectral subspace $\mathrm{im}(P_\Sigma)$ is infinite dimensional, hence so is the manifold $W^\Sigma$ obtained through Theorem~\ref{thm:main}.
This, therefore, does not result in a reduced-dimensional model for the full system.

\subsection{Permitted class of nonlinearities}
\label{sect:beam_nonlin}

Identification of the domain $\dom(\mathcal{L}_B)$ of \eqref{eq:L_B} and its fractional domains $Y^\beta$ is crucial to understanding what the permissible nonlinearities are, as dictated by our assumption \ref{A3x}.
Recall from Section~\ref{sect:setup} that, in general, we have $\dom(\mathcal{L}_B) \supset \dom(A) \times \dom(B)$ and $\dom(\mathcal{L}_{2\mu A^\alpha}) = \dom(A) \times \dom(A^\alpha)$ for $\alpha \in [\frac12,1]$.
In particular, for the structurally damped model with assumption \eqref{eq:ass_dom12} in place, 
\begin{displaymath}
    \dom(\mathcal{L}_{2\mu A^\frac12}) = H^4([0,l]) \cap \left( \bigcap_{\eta \in F} \ker(\eta) \right) \times H^2([0,l]) \cap \left( \bigcap_{\eta \in G} \ker(\eta) \right).
\end{displaymath}

An application of Lemma~\ref{lemma:embedding} from Appendix~\ref{appendix:domLB} with $k = 2$  yields that $Y^\beta$  associated to $\mathcal{L}_{2\mu A^\frac12}$ is continuously embedded in  $H^3([0,l]) \times H^1([0,l])$ for $\beta > \frac12$.
Moreover, $Y^\beta$ is also continuously embedded in $Y$.
Therefore, it suffices to show that the nonlinearity is of class $C^r$ from $H^3([0,l]) \times H^1([0,l]) \cap Y$ to $Y$, which then implies \ref{A3x}. 
Given the form of $\mathcal{F} = (0,f)$ in \eqref{eq:first_order_1}-\eqref{eq:first_order_2}, we have $\mathrm{im}(\mathcal{F}) \subset \{0\} \times X \subset Y$, thus we obtain that a sufficient condition for \ref{A3x} is that $f$ is of class $C^r$ from  $H^3([0,l]) \times H^1([0,l]) \cap Y$ to $X = L^2([0,l])$.
Hence, $f = f(w,\dot{w})$  can admit three spatial derivatives on $w$ and one on $\dot{w}$.

For instance, the model
\begin{displaymath}
    \frac{\partial^2w}{ \partial t^2} -\frac{2 \mu EI}{\rho} \frac{\partial^3 w}{\partial x^2 \partial t} + \frac{EI}{\rho}  \frac{\partial^4 w}{\partial x^4}  = \frac{\partial^3 w}{\partial x^3} \left( \frac{\partial^2 w}{\partial x \partial t} \right)^3 + w^2\frac{\partial w}{\partial x} 
\end{displaymath}
with boundary conditions \eqref{eq:BCs_beam}
hence satisfies all assumptions of Section~\ref{sect:setup} and admits SSMs for any subdivision of the spectrum of the form \ref{A4} with $\gamma < \sup \mathrm{Re} \, \sigma(\mathcal{L}_{2\mu A^\frac12})$.

\subsubsection{Nonlinear axially restrained elastic beam}
\label{sect:beam_final_model}

We proceed to describe a more physically sound geometric nonlinearity taken from \cite[page 312]{lacarbonara2013nonlinear}, originally derived in \cite{mettler1962dynamic}.
The equation of motion is 
\begin{equation}
      \frac{\partial^2 w}{\partial t^2} - \frac{2 \mu EI}{\rho} \frac{\partial^3 w}{\partial x^2 \partial t}
+ \frac{EI}{\rho} \frac{\partial^4 w}{\partial x^4}
= \left(  \frac{E }{2 \rho l} \int_0^l \left( \frac{\partial w}{\partial x} \right)^2 dx \right)
\frac{\partial^2 w}{\partial x^2}
+ \varepsilon h(t,x),
\label{eq:nonlin_eq}
\end{equation}
with boundary conditions \eqref{eq:BCs_beam}.
Here, all parameters are still as introduced in the introduction to Section~\ref{sect:beam} and $h: \mathbb{R} \times [0,l] \to \mathbb{R}$ is an external forcing term with amplitude $\varepsilon \geq 0$. 

In \cite{lacarbonara2013nonlinear}, \eqref{eq:nonlin_eq} is derived from the theory of unshearable beams under the assumption of no axial forcing, small rotational deformation of the cross sections and negligible rotary inertia.
This model has a limited range of validity, however, it is expected to perform reasonably well in linearly elastic beams that are axially restrained at the boundaries and are subject to moderate loading that is not in resonance with any axial vibration modes, as per \cite[page 312]{lacarbonara2013nonlinear}.
Equation \eqref{eq:nonlin_eq} is studied extensively in \cite{formica2013coupling}.

The nonlinearity in \eqref{eq:nonlin_eq} corresponds to 
\begin{equation}
    f(w,\dot{w}) = \frac{E }{2 \rho l} \Vert w \Vert_{H^1([0,l])}^2 \frac{\partial^2 w}{\partial x^2}
    \label{eq:f_beam}
\end{equation}
in the notation of \eqref{eq:second_order}.
In particular, $f  : H^2([0,l]) \times L^2([0,l]) \to L^2([0,l])$ is of class $C^\infty$. 
By the previous section, this implies that the assumption on the nonlinearity \ref{A3x} is met with $r = \infty$.

\subsection{Computation of the SSM \texorpdfstring{$W^\Sigma$}{}}
\label{sect:beam_mfd}

We shall take as our model \eqref{eq:nonlin_eq}, but non-dimensionalize it as in \cite{formica2013coupling} for reasons of comparability and convenience.
The resulting equation reads
\begin{equation}
          \frac{\partial^2 w}{\partial t^2} - 2 \mu \frac{\partial^3 w}{\partial x^2 \partial t}
+ \frac{\partial^4 w}{\partial x^4}
= \left(  \frac{a}{2 } \int_0^1 \left( \frac{\partial w}{\partial x} \right)^2 dx \right)
\frac{\partial^2 w}{\partial x^2} + \varepsilon h(t,x),
\label{eq:nonlin_eq_nondim}
\end{equation}
where $\mu$ is a rescaled damping parameter and $a$ is a dimensionless coefficient.
For now, we also take $\varepsilon=0$.
The length of the beam has been rescaled to $1$, hence $X =L^2([0,1])$ with the usual $L^2$ inner product.

From here onwards, we denote by $A$ the fourth-order derivative operator.
Equation \eqref{eq:nonlin_eq_nondim} corresponds to
\begin{equation}
    \dot{\xi} = \mathcal{L}_{2 \mu A^\frac12} \xi + \mathcal{F}(\xi)
    \label{eq:beam_first_order}
\end{equation}
in first-order form, where 
\begin{equation}
    \mathcal{F}: Y^\beta \ni  (u,v)\mapsto \left(0,\frac{a}{2 } \int_0^1 (\partial_x u)^2 dx \partial_{xx}  u \right) \in Y.
    \label{eq:mathcalF_beam}
\end{equation}

The spectral quantities of $\mathcal{L}_{2 \mu A^\frac12}$ from  Section~\ref{sect:spectrum} that would normally be required for the manifold calculations are listed in Appendix~\ref{appendix:biorthogonal}. 
Before embarking on such calculations, we notice the following peculiar property of the nonlinearity \eqref{eq:mathcalF_beam}:
\begin{displaymath}
    \mathcal{F} (\psi_n^{+}) = \mathcal{F} (\psi_n^{-}) = \frac{ ia }{8(n\pi)^2 \sqrt{1-\mu^2}} \left(\psi_n^+ - \psi_n^- \right).
\end{displaymath}
From this, we infer that the spectral subspace $Y^\beta_\Sigma$, $\Sigma = \{\lambda_1^+,\lambda_1^-\}$, is invariant under the nonlinear dynamics; we may hence take $W^\Sigma = Y^\beta_\Sigma$.

Parameterizing $Y^\beta_\Sigma \cong \mathbb{C}$ via $z \mapsto z \psi_1^+ + \overline{z} \psi_1^-$, we obtain
\begin{equation}
    \dot{z} = \lambda_1^+ z + \frac{ ia }{8\pi^2 \sqrt{1-\mu^2}} (z+\overline{z})^3.
    \label{eq:reddyn_beam_b0}
\end{equation}
We perform a normal form transformation on system \eqref{eq:reddyn_beam_b0} to match the form of \eqref{eq:R_form}.
The resulting system is
\begin{equation}
    \dot{q} = \lambda_1^+ q + \frac{ 3ia }{8\pi^2 \sqrt{1-\mu^2}} |q|^2q
    \label{eq:reddyn_nf_3ord}
\end{equation}
with $z = p(q,\overline{q})$,
\begin{equation}
    p(q,\overline{q}) =q+ \frac{ ia }{8\pi^2 \sqrt{1-\mu^2}} \left[ \frac{1}{2 \lambda_1^+}q^3 + \frac{3}{2 \lambda_1^-}|q|^2 \overline{q} + \frac{1}{3\lambda_1^--\lambda_1^+} \overline{q}^3 \right] + o(|q|^3)
    \label{eq:p}
\end{equation}
as $|q| \to 0$.
Thus, we have 
\begin{equation}
    K (q,\overline{q})= p(q,\overline{q}) \psi_1^+ + \overline{p(q,\overline{q})} \psi_1^-.
    \label{eq:K_flat}
\end{equation}
(Note that this is precisely the configuration we would have obtained by naively following Section~\ref{sect:param} as well.)

We compute the backbone curve, as in Section~\ref{sect:backbone}. 
For the frequency, we have 
\begin{displaymath}
    \vartheta(r) = \pi^2\sqrt{1-\mu^2} + \frac{ 3a }{8\pi^2 \sqrt{1-\mu^2}}r^2,
\end{displaymath}
whereas the amplitude we compute according to \eqref{eq:Amp} and \eqref{eq:K_flat} with $K $ truncated at third order; the resulting curve $\mathcal{B}$ is plotted in Figure~\ref{fig:2} for different values of $a$.

To make the problem more interesting, we modify the nonlinearity to
\begin{displaymath}
    \mathcal{G}: Y^\beta \ni  (u,v)\mapsto \mathcal{F}(u,v) + b(0,u\partial_{x}u\partial_{x}v) \in Y,
\end{displaymath}
for some $b\in \mathbb{R}$,
so that the governing second-order equation becomes
\begin{equation}
             \frac{\partial^2 w}{\partial t^2} - 2 \mu \frac{\partial^3 w}{\partial x^2 \partial t}
+ \frac{\partial^4 w}{\partial x^4}
= \left(  \frac{a}{2 } \int_0^1 \left( \frac{\partial w}{\partial x} \right)^2 dx \right)
\frac{\partial^2 w}{\partial x^2} + b w \frac{\partial w}{\partial x} \frac{\partial^2 w}{\partial x \partial t}.
\label{eq:mech_eq_a_b}
\end{equation}

We proceed with the computations along the guidelines outlined in Section~\ref{sect:param}.
Inspection of the eigenvalues in Appendix~\ref{appendix:biorthogonal} shows that Theorem~\ref{thm:main} holds with $\ell = 3$.
The nonlinearity $\mathcal{G}$  is of order three and hence will only enter in \eqref{eq:invariance_ordered} for the third-order coefficients.
For convenience, let us denote by $\widehat{\mathcal{G}} :Y^\beta \times Y^\beta \times Y^\beta \to Y$ the map
\begin{displaymath}
    \widehat{\mathcal{G}}(\xi,\zeta,\varrho) := \begin{pmatrix}
        0 \\ \frac{a}{2 } \int_0^1 \partial_x \xi_1 \partial_x \zeta_1 dx \partial_{xx}  \varrho_1 +b\xi_1 \partial_x \zeta_1 \partial_{x}  \varrho_2
    \end{pmatrix}
\end{displaymath}
(here $(\cdot)_j$ takes the $j$-th component, $j =1,2$),
so that $\mathcal{G}(\xi) = \widehat{\mathcal{G}}(\xi,\xi,\xi)$.
Note that $D^3 \mathcal{G}(0) = 6 \widehat{\mathcal{G}}$.

The problem at first order, \eqref{eq:1storder}, is unchanged and hence we take as solution $K_{(1,0)} = \psi_1^+$, $K_{(0,1)} = \psi_1^-$.
Since $\mathcal{F}$ does not have second-order terms, the right-hand sides of \eqref{eq:2ndorder} are $0$.
Thus, the unique solution at second order is given by $K_{(2,0)} = K_{(1,1)} = K_{(0,2)} = 0$.

At third order, matching the $z_1^3$ terms of \eqref{eq:invariance_ordered} yields
\begin{equation}
    \big[\mathcal{L}_{2 \mu A^\frac12} - 3 \lambda_1^+ \big]K_{(3,0)}  = - \frac16 D^3 \mathcal{G}(0) [\psi_1^+,\psi_1^+,\psi_1^+] = -\mathcal{G}(\psi_1^+).
    \label{eq:order_30}
\end{equation}
Proceeding as in Section~\ref{sect:param}, we may project \eqref{eq:order_30} to obtain
\begin{align*}
    \langle K_{(3,0)} , \psi^{*-}_m \rangle_Y  &= \frac{1}{3 \lambda_1^+-\lambda_m^+} \left\langle  \mathcal{G}(\psi_1^+), \psi^{*-}_m\right\rangle_Y, \\
    \langle K_{(3,0)} , \psi^{*+}_m \rangle_Y  &= \frac{1}{3 \lambda_1^+-\lambda_m^-} \left\langle  \mathcal{G}(\psi_1^+), \psi^{*+}_m\right\rangle_Y,
\end{align*}

A subsequent matching of the $z_1^2z_2$ terms of \eqref{eq:invariance_ordered} yields (upon noting $ \mathcal{G}(\psi_1^+) =\widehat{\mathcal{G}}(\psi_1^-,\psi_1^+,\psi_1^+) = \widehat{\mathcal{G}}(\psi_1^+,\psi_1^-,\psi_1^+)$)
\begin{displaymath}
    \big[\mathcal{L}_{2 \mu A^\frac12} - (2 \lambda_1^+ +\lambda_1^-) \big] K_{(2,1)}  = -2 \mathcal{G}(\psi_1^+) - \widehat{\mathcal{G}}(\psi_1^+,\psi_1^+,\psi_1^-)+ R_0 \psi_1^+ 
\end{displaymath}
which in turn yields
\begin{subequations}\label{eq:K21s}
    \begin{align}
    \langle K_{(2,1)} , \psi^{*-}_m \rangle_Y  &= \frac{1}{2 \lambda_1^+ +\lambda_1^--\lambda_m^+} \left\langle   2 \mathcal{G}(\psi_1^+)+ \widehat{\mathcal{G}}(\psi_1^+,\psi_1^+,\psi_1^-) - R_0 \psi_1^+, \psi^{*-}_m\right\rangle_Y, \\
    \langle K_{(2,1)} , \psi^{*+}_m \rangle_Y  &= \frac{1}{2 \lambda_1^+ +\lambda_1^--\lambda_m^-}  \left\langle   2 \mathcal{G}(\psi_1^+) + \widehat{\mathcal{G}}(\psi_1^+,\psi_1^+,\psi_1^-) - R_0 \psi_1^+, \psi^{*+}_m\right\rangle_Y.
\end{align}
\end{subequations}

We proceed to compute the right-hand sides with the quantities listed in Appendix~\ref{appendix:biorthogonal}. 
We have $ \widehat{\mathcal{G}}(\psi_1^-,\psi_1^+,\psi_1^+) = \widehat{\mathcal{G}}(\psi_1^+,\psi_1^-,\psi_1^+) = \mathcal{G}(\psi_1^+)$,
\begin{align*}
    &\mathcal{G}(\psi_1^+) = \mathcal{F}(\psi_1^+) + \begin{pmatrix}
        0 \\
        1 
    \end{pmatrix} \left( -\mu + i \sqrt{1-\mu^2}\right) \frac{b}{4\pi^2} \left( \sin(\pi x) + \sin (3 \pi x) \right), \\
    &\widehat{\mathcal{G}}(\psi_1^+,\psi_1^+,\psi_1^-) = \mathcal{F}(\psi_1^+) + \begin{pmatrix}
        0 \\
        1 
    \end{pmatrix} \left( -\mu - i \sqrt{1-\mu^2}\right) \frac{b}{4\pi^2} \left( \sin(\pi x) + \sin (3 \pi x) \right),
\end{align*}
and in turn
\begin{align*} 
    &\mathcal{G}(\psi_1^+) = c_1^+ \left(\psi_1^+ - \psi_1^- \right) + c_3^+(\psi_3^+-\psi_3^-), \\
    &\widehat{\mathcal{G}}(\psi_1^+,\psi_1^+,\psi_1^-) = c_1^- \left(\psi_1^+ - \psi_1^- \right) + c_3^-(\psi_3^+-\psi_3^-),
\end{align*}
where
\begin{displaymath}
    c_1^\pm := \frac{ i(a +b\mu)}{8\pi^2 \sqrt{1-\mu^2}} \pm \frac{b}{8\pi^2}  , \qquad  c_3^\pm:= \frac{b}{8\pi^2} \left(\frac{i \mu}{\sqrt{1-\mu^2}}\pm1 \right).
\end{displaymath}
We therefore obtain
\begin{align*}
    \langle K_{(3,0)} , \psi^{*-}_m \rangle_Y &=  \frac{c_1^+}{2\lambda_1^+} \delta_{1m} +\frac{c_3^+}{3\lambda_1^+ - \lambda_3^+} \delta_{3m}, \\
    \langle K_{(3,0)} , \psi^{*+}_m \rangle_Y &=  -\frac{c_1^+}{3\lambda_1^+-\lambda_1^-} \delta_{1m} -\frac{c_3^+}{3\lambda_1^+ - \lambda_3^-} \delta_{3m}.
\end{align*}

As discussed in Section~\ref{sect:param}, $R_0$ is chosen to eliminate the near resonant term; here this amounts to letting 
\begin{displaymath}
    R_0 = 2c_1^+ + c_1^-.
\end{displaymath}
Plugging this back into the equations \eqref{eq:K21s}, we also obtain
\begin{align*}
    \langle K_{(2,1)} , \psi^{*-}_m \rangle_Y  &= \frac{2c_3^++c_3^-}{2 \lambda_1^+ +\lambda_1^--\lambda_3^+} \delta_{3m},  \\
    \langle K_{(2,1)} , \psi^{*+}_m \rangle_Y  &=-\frac{2c_1^++c_1^-}{2 \lambda_1^+} \delta_{1m}  - \frac{2c_3^++c_3^-}{2 \lambda_1^+ +\lambda_1^--\lambda_3^-} \delta_{3m} .
\end{align*}

Overall, we have (upon precomposing $K=\hat{K}|_{\Delta_c}$ with the map $z \mapsto (z,\overline{z})$),
\begin{align*}
    K(z) &=  K^0(z) + \overline{K^0(z)}+o(|z|^3), \qquad \text{as } |z| \to 0\\
    \dot{z} &= \lambda_1^+ z  +(2c_1^+ + c_1^-)|z|^2z,
\end{align*}
with
\begin{align*}
    K^0(z)& = \left(z + \frac{c_1^+}{2\lambda_1^+} z^3  \right)\psi_1^+   +\left(-\frac{c_1^+}{3\lambda_1^+-\lambda_1^-}z^3-\frac{2c_1^++c_1^-}{2 \lambda_1^+}|z|^2z\right)\psi_1^-\\  
    &+ \left(\frac{c_3^+}{3\lambda_1^+ - \lambda_3^+}z^3 +  \frac{2c_3^++c_3^-}{2 \lambda_1^+ +\lambda_1^--\lambda_3^+}|z|^2z \right) \psi_3^+, \\
    &+ \left(-\frac{c_3^+}{3\lambda_1^+ - \lambda_3^-} z^3  - \frac{2c_3^++c_3^-}{2 \lambda_1^+ +\lambda_1^--\lambda_3^-} |z|^2z\right) \psi_3^-,
\end{align*}
or in polar coordinates (pulling back over $(r,\theta) \mapsto r e^{i\theta} = z$),
\begin{subequations} \label{eq:K_and_reddyn_beam}
\begin{align}
    K(r,\theta) &= 2 \mathrm{Re} \,K^0(re^{i \theta}) + o(r^3), \qquad \text{as } r \to 0, \\
    \dot{r} &=  -\mu \pi^2 r + \frac{b}{8\pi^2}r^3, \\
    \dot{\theta} &=\pi^2 \sqrt{1-\mu^2}+ \frac{3(a+b\mu)}{8\pi^2 \sqrt{1-\mu^2}}r^2 = \vartheta(r).
\end{align}
\end{subequations}
Using this information, we may now compute the backbone curve as in Section~\ref{sect:backbone}.
We plot the resulting curve for a few values of $a,b$ in Figure~\ref{fig:2}.

\begin{figure*}
    \centering
    \begin{subfigure}[b]{0.495\textwidth}
        \centering
        \includegraphics[width=\textwidth]{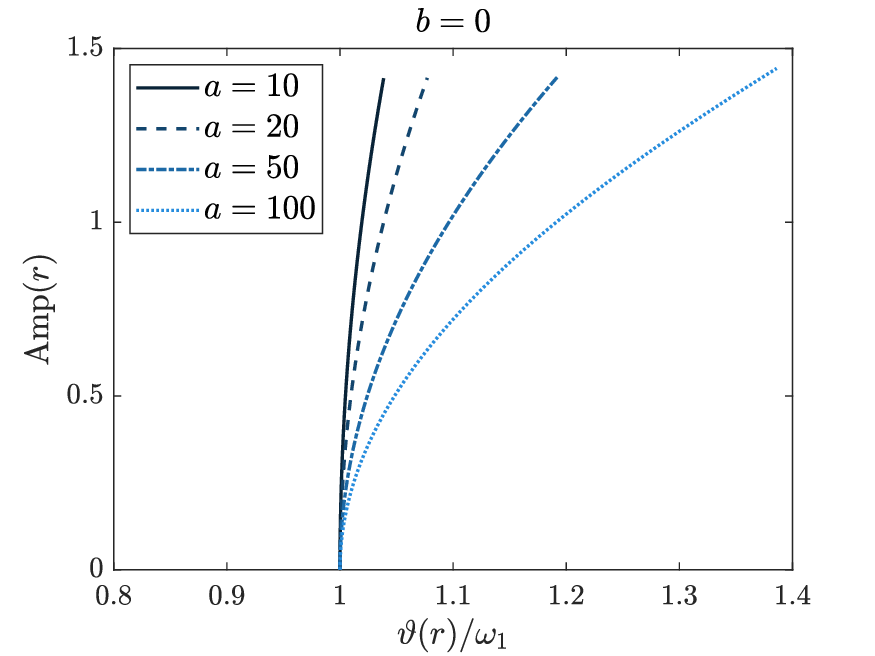}
    \end{subfigure}
    \hfill
    \begin{subfigure}[b]{0.495\textwidth}  
        \centering 
        \includegraphics[width=\textwidth]{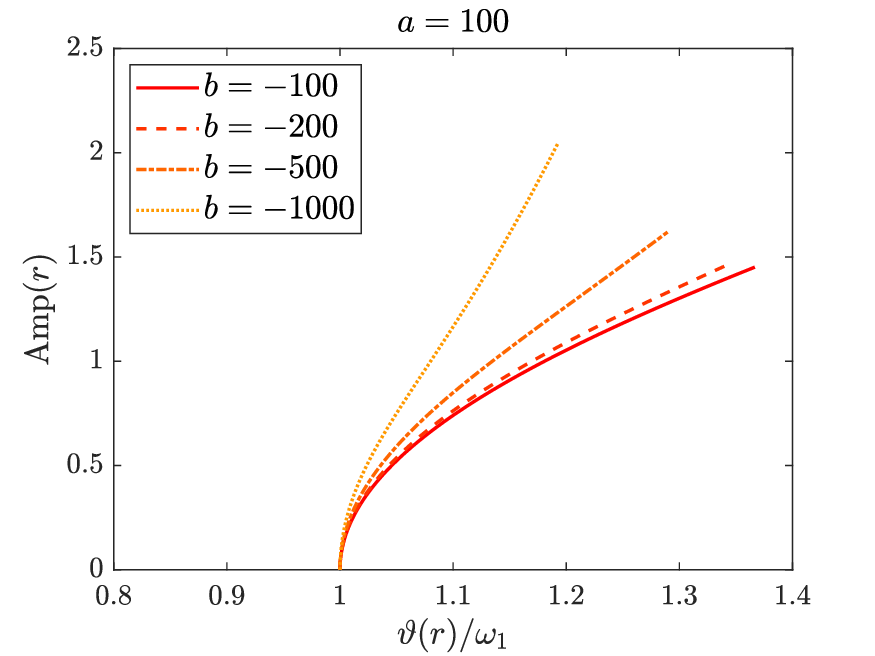}
    \end{subfigure}
    \caption{The images of the backbone curves $\mathcal{B}$ from \eqref{eq:backbone} over $r \in [0,1]$ (third order approximation, according to $K^{\leq 3}$) with varying parameter values $a$ and $b$, for the mechanical system \eqref{eq:mech_eq_a_b}.
    The horizontal, frequency axis is normalized by the first (damped) natural frequency $\omega_1 = \mathrm{Im}(\lambda_1^+)$. 
    The damping parameter is fixed at $\mu = 0.05$.}
    \label{fig:2}
\end{figure*}

In the left panel of Figure~\ref{fig:3}, the evolution of a single trajectory along the third order approximation $\mathrm{im}(K^{\leq 3})$ of the manifold $W^\Sigma$, depicted using the two tangential coordinates $\mathrm{Re} \langle K^{\leq 3}(z), \psi^{*-}_1 \rangle_Y$, $\mathrm{Im} \langle K^{\leq 3}(z), \psi^{*-}_1 \rangle_Y$, and the norm of the complementary components
\begin{equation}
    K^\perp(z) : = K^{\leq 3}(z) - \langle K^{\leq 3}(z), \psi^{*-}_1 \rangle_Y \psi_1^+-\langle K^{\leq 3}(z), \psi^{*+}_1 \rangle_Y \psi_1^-.
    \label{eq:Kperp}
\end{equation}
In the right panel, the space-time evolution of the same sample trajectory is shown.
This trajectory is advected using \eqref{eq:K_and_reddyn_beam} on the reduced space and is subsequently mapped into $Y$ via $K^{\leq 3}$.

\begin{figure*}
    \centering
    \begin{subfigure}[b]{0.495\textwidth}
        \centering
        \includegraphics[width=\textwidth]{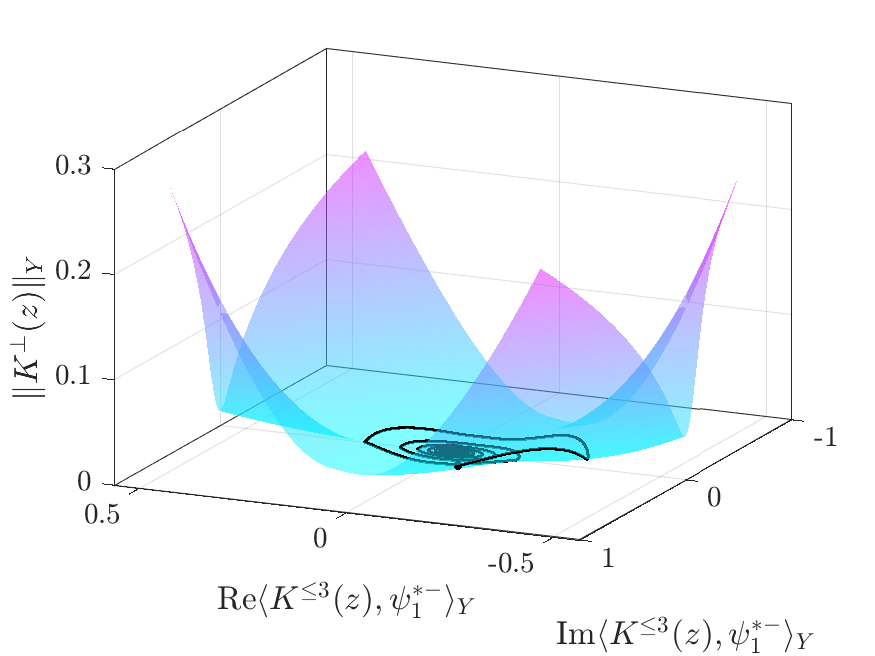}
    \end{subfigure}
    \hfill
    \begin{subfigure}[b]{0.495\textwidth}  
        \centering 
        \includegraphics[width=\textwidth]{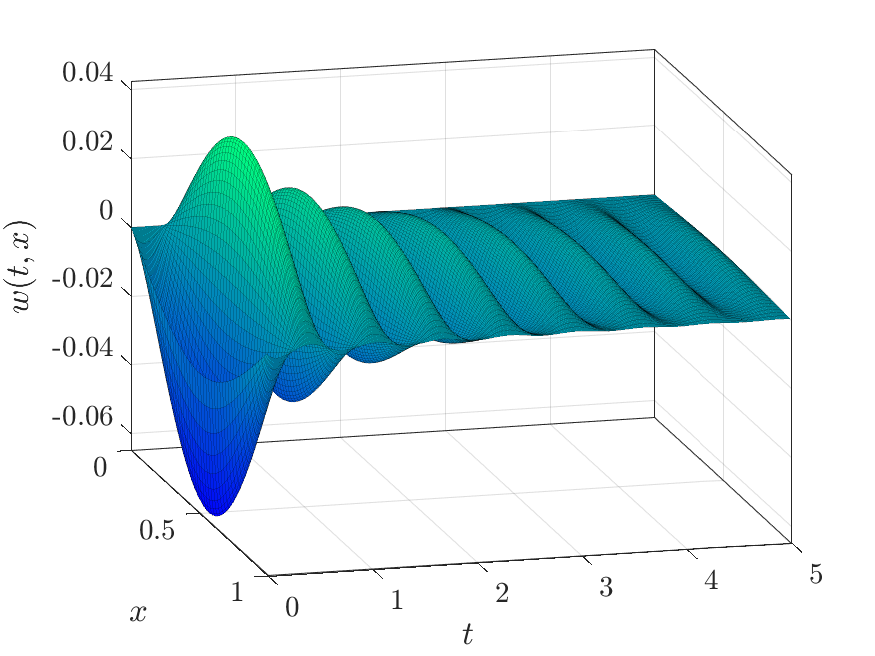}
    \end{subfigure}
    \caption{(Left) The approximate manifold for the mechanical system \eqref{eq:mech_eq_a_b}, embedded in phase space $Y$ with a sample trajectory whose evolution is described by \eqref{eq:K_and_reddyn_beam}, initiated at $(r_0,\theta_0) = (0.4,2)$. (Right) The same sample trajectory shown using the physical coordinates $(t,x,w(t,x))$. 
    The parameters are fixed as $\mu = 0.05$, $a =100$ and $b = -1000$.}
    \label{fig:3}
\end{figure*}

\subsection{The addition of forcing}
\label{sect:beam_forcing}

We now describe briefly how the backbone curve perturbs under weak periodic forcing and how one can extract a frequency response curve from the reduced dynamics.
We take \eqref{eq:nonlin_eq_nondim} (i.e., the $b = 0$ case) as our model and take for the particular forcing
\begin{displaymath}
    h(t,x)  = \sin (\pi x) \cos (\omega t),
\end{displaymath}
for some forcing frequency $\omega > 0$.
This choice of $h$ is meant to replicate the forcing $\lambda \cos (\omega t)$ in \cite{formica2013coupling} and satisfy the boundary conditions \eqref{eq:BCs_beam}.
For the parameters of the model, we  mimic those of \cite{formica2013coupling} by taking $\mu = 0.395/\pi^2$ and $a = 1.2 \cdot 10^4$. Note that only a qualitative comparison will be feasible due to the difference in forcing and damping mechanisms (\cite{formica2013coupling} uses the external damping scenario, i.e., $\alpha = 0$).

Note that $h$ is aligned with the spectral subspace $Y^\beta_\Sigma = W^\Sigma$.
Given the periodicity of $h(\cdot,x)$, we introduce $\phi:= \omega t$ to model the phase space extension on the compact space $\mathbb{S} := \mathbb{R}/2\pi \mathbb{Z}$; so that \eqref{eq:nonlin_eq_nondim} can be modeled as an autonomous system on $Y \times \mathbb{S}$ with the extension $\dot{\phi} = \omega$.
We let $\tilde{h}(\phi,x) := h(\phi/\omega,x)$, and  declare the first-order form of $h$ as $\mathcal{H}: \mathbb{S} \to Y$ given by 
\begin{displaymath}
    \mathcal{H}(\phi) := \begin{pmatrix}
        0 \\ \tilde{h}(\phi,\cdot)
    \end{pmatrix}
     = -\frac{i  \cos(\phi)}{2\sqrt{1-\mu^2}} \left( \psi_1^+ - \psi_1^- \right) .
\end{displaymath}
The addition of $\varepsilon\mathcal{H}$ to \eqref{eq:beam_first_order} results in the reduced dynamics \eqref{eq:reddyn_beam_b0} changing to
\begin{subequations} \label{eq:forcedreddyn}
\begin{align}
    \dot{z} &= \lambda_1^+ z + \frac{ ia }{8\pi^2 \sqrt{1-\mu^2}} (z+\overline{z})^3  - \varepsilon\frac{i  \cos(\phi)}{2\sqrt{1-\mu^2}}, \\
    \dot{\phi} &= \omega,
\end{align}
\end{subequations}
on $ \mathbb{C} \times \mathbb{S} \cong W^\Sigma \times \mathbb{S}$.

To extract the backbone curve, we transform \eqref{eq:forcedreddyn} to be in a normal form, which, having already computed its autonomous third order part in \eqref{eq:reddyn_nf_3ord}, we take to be 
\begin{subequations} \label{eq:forcedreddyn_nf}
\begin{align}
    \dot{q} &= \lambda_1^+ q + \frac{ 3ia }{8\pi^2 \sqrt{1-\mu^2}} |q|^2q  + \varepsilon c_h e^{i \phi}, \\
    \dot{\phi} &= \omega,
\end{align}
\end{subequations}
for some $c_h \in \mathbb{C}$.
In general, a conjugacy mapping between \eqref{eq:forcedreddyn} and \eqref{eq:forcedreddyn_nf} is a function of the form $z=\tilde{p}(q,\overline{q},\phi)$, which we shall approximate via a Taylor expansion.
For our purposes, it suffices to compute this to first order in $\varepsilon$ and to order three in $|z|$ with no mixed terms of the form $|z|^i\varepsilon^j$, as per the recommendations of \cite{szalai2017nonlinear,breunung2018explicit}.
We hence compute 
\begin{displaymath}
    \tilde{p}(q,\overline{q},\phi ) = p(q,\overline{q})  + \varepsilon\Xi (\phi) + o(\varepsilon,|q|^3),
\end{displaymath}
where $p$ is the same as in the autonomous case \eqref{eq:p} and the non-autonomous first order correction $\Xi : \mathbb{S} \to \mathbb{C}$ satisfies the invariance equation
\begin{displaymath}
    (\lambda_1^+ - \omega \partial_\phi) \Xi(\phi) = c_h e^{i \phi} +i\frac{e^{i \phi} + e^{- i \phi}}{4 \sqrt{1-\mu^2}}.
\end{displaymath}
In order to avoid division by $i - \lambda_1^+/\omega \approx 0$, we choose $c_h = -i/(4 \sqrt{1-\mu^2})$ and
\begin{displaymath}
    \Xi (\phi)= \frac{i}{4 \sqrt{1-\mu^2} (i \omega + \lambda_1^+)} e^{-i\phi}.
\end{displaymath}

The system \eqref{eq:forcedreddyn_nf} may be put into polar coordinates as
\begin{subequations} \label{eq:forcedreddyn_nf_polars}
\begin{align}
    \dot{r} &= f_1(r)  + \varepsilon \mathrm{Re } (c_h e^{i \phi}), \\
    \dot{\theta} &= f_2(r) + \frac{\varepsilon}{r}\mathrm{Im } (c_h e^{i \phi}) \\
    \dot{\phi} &= \omega,
\end{align}
\end{subequations}
For systems of the form \eqref{eq:forcedreddyn_nf_polars}, \cite[Theorem~3.8]{breunung2018explicit} shows that the forced response curve is given by zeros of the function
\begin{equation}
    f_{\mathrm{FRC}}(r,\omega)  = [f_1(r)]^2 + (f_2(r)-\omega)^2r^2-\varepsilon^2 (\mathrm{Im}\,c_h)^2.
    \label{eq:FRC}
\end{equation}
Note that if one seeks local maxima of $f_{\mathrm{FRC}}$, one arrives precisely at the autonomous definition of the backbone curve in Section~\ref{sect:backbone}; here, however, the meaning of $r$ had to be re-adjusted based on the transformation $\Xi$.
Forced response curves computed according to \eqref{eq:FRC} are plotted in Figure~\ref{fig:4} in the plane of $w_{max}(1/2) = \sup_t w(t,1/2)$ and $\omega$, where $w$ is computed according to $K = \tilde{p} \psi_1^+ + \overline{\tilde{p}} \psi_1^-$.

\begin{figure}
     \centering
     \includegraphics[width=0.5\textwidth]{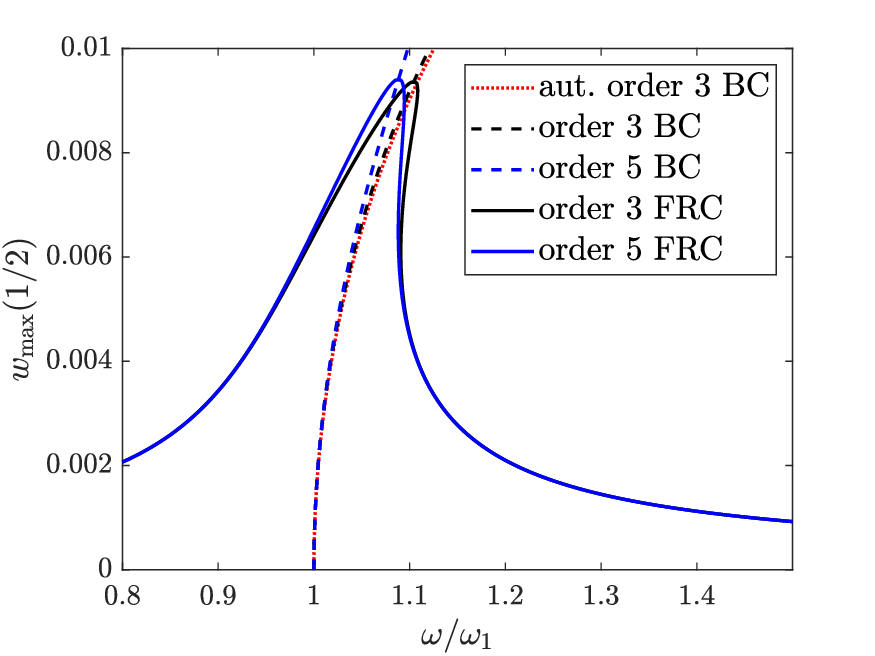}
     \caption{Backbone curves (BCs) and forced response curves (FRCs) for the mechanical system \eqref{eq:nonlin_eq_nondim} computed from eq.~\eqref{eq:FRC}.
     The parameters are fixed as $\mu = 0.395/\pi^2$, $a = 1.2 \cdot 10^5$, $b = 0$ and $\varepsilon=0.075$.}
     \label{fig:4}
\end{figure}

It is sometimes beneficial to increase the order of autonomous normal form by addition of terms of the form $\beta_j|q|^{2j}q$, so as to avoid further near-resonances of the form $(j+1) \lambda_1^+ + j \lambda_1^- \approx \lambda_1^+$ if the underlying structure is damped very lightly ($\mu \ll 1$), see \cite{szalai2017nonlinear}.
In Figure~\ref{fig:4}, we also include results of an order five calculation for the sake of comparison.

Qualitatively, the observations in Figure~\ref{fig:4} support those reported in \cite{formica2013coupling} in terms of the initial hardening behavior.
A direct comparison is not possible due to the differing forcing profiles.

\section{Example: SSM reduction of a nonlinear Kirchhoff-Love plate}
\label{sect:plate}

As a second example, we consider the following nonlinear perturbation of the classical (damped) Kirchhoff-Love plate model on a rectangular domain $\Omega = [0,a] \times [0,b]$,
\begin{equation}
    \ddot{w} - 2\mu \Delta \dot{w} + \Delta^2 w = \kappa w^3,
    \label{eq:plate_model}
\end{equation}
for some $\kappa \in \mathbb{R}$,
subject to the boundary conditions
\begin{equation}
    w|_{\partial \Omega} = 0, \qquad \Delta w |_{\partial \Omega} = 0.
    \label{eq:plate_BCs}
\end{equation}
We shall assume $a/b \in \mathbb{R} \setminus \mathbb{Q}$ so that we do not have to deal with eigenvalues of higher multiplicity.
We take, as usual, $X:= L^2(\Omega)$.

In \eqref{eq:plate_model}, the elastic operator is $A = \Delta^2 : \dom(A) \to X$ defined on the domain
\begin{displaymath}
    \dom(A) = \Big\{ w \in H^4(\Omega) \; \big| \; Tw = 0, \; T \Delta w= 0 \Big\}
\end{displaymath}
encompassing the boundary conditions \eqref{eq:plate_BCs} via the trace operator $T : H^1(\Omega) \to L^2(\partial \Omega)$. 
The operator $A$ is  positive and self-adjoint with eigenvalue-eigenvector pairs of the form
\begin{displaymath}
    \sigma_{nm} = \left[ \left(\frac{n\pi}{a}\right)^2 + \left(\frac{m\pi}{b}\right)^2\right]^2,
\qquad \phi_{nm}(x) = \sqrt{\frac{2}{ab \sigma_{nm}}} \sin\left(\frac{n\pi x}{a}\right)
\sin\left(\frac{m\pi y}{b}\right), 
\end{displaymath}
$n,m \in \mathbb{N}$.
Here $\phi_{nm}$ is normalized such that the corresponding eigenfunction $\psi_{nm}^\pm$ of the operator $\mathcal{L}_{2\mu A^\frac12}$ is of norm one (see Appendix~\ref{appendix:biorthogonal_plate}).

While not explicitly spelled out in \cite{russell1988positive}, it follows from an analogous argument that for the considered set of boundary conditions \eqref{eq:plate_BCs}, we have $A^\frac12 = -\Delta$ on
\begin{displaymath}
    \dom(A^\frac12) = \Big\{ w \in H^2(\Omega) \; \big| \; Tw = 0 \Big\}.
\end{displaymath}
Hence the damping term in \eqref{eq:plate_model} corresponds to the structural damping scenario.

The first order system associated to \eqref{eq:plate_model} and \eqref{eq:plate_BCs} reads
\begin{displaymath}
    \dot{\xi} = \mathcal{L}_{2 \mu A^\frac12} \xi + \mathcal{F}(\xi)
\end{displaymath}
on the phase space $Y = \dom(A^\frac12) \times X$, with 
\begin{displaymath}
    \mathcal{F} : Y \ni (u,v) \mapsto (0, u^3) \in Y.
\end{displaymath}
(The function takes values in $Y$ since $\dom(A^\frac12) \subset L^\infty(\Omega)$ by Sobolev embedding.)
Note that due to the nonlinearity not containing derivatives, we do not need to contend with $Y^\beta$ spaces here.
We still note that for an equation of the form \eqref{eq:plate_model}, three spatial derivatives on $w$ and one on $\dot{w}$ would be permissible, as an application of Lemma~\ref{lemma:embedding} shows.

\subsection{Computation of the SSM \texorpdfstring{$W^\Sigma$}{}}
\label{sect:plate_computation}

We follow once more the procedure outlined in Section~\ref{sect:param}.
For the spectral subset \ref{A4}, we take $\Sigma = \{\lambda_{11}^+,\lambda_{11}^-\}$. 
For the explicit eigenvalues, see Appendix~\ref{appendix:biorthogonal_plate}; with the choice $a = 1$ and $b = \pi/3$ (which we fix throughout), Theorem~\ref{thm:main} holds with $\ell = 2$.
The linear and second order problems obtained from \eqref{eq:invariance_ordered} proceed analogously to the previous example, the solutions are $K_{(1,0)} = \psi_{11}^+$, $K_{(0,1)} = \psi_{11}^-$ (see Appendix~\ref{appendix:biorthogonal_plate}), and $K_{(2,0)} = K_{(1,1)} = K_{(0,2)} = 0$.

For the third order computations, we first remark that $D^3 \mathcal{F}(0)[\psi_{11}^\pm,\psi_{11}^\pm,\psi_{11}^\pm] =6 \mathcal{F}(\psi_{11}^+)$, and that
\begin{displaymath}
    \mathcal{F}(\psi_{11}^+) = c \left[9(\psi_{11}^--\psi_{11}^+) -3(\psi_{31}^--\psi_{31}^+) - 3(\psi_{13}^--\psi_{13}^+) +(\psi_{33}^--\psi_{33}^+) \right],
\end{displaymath}
where 
\begin{displaymath}
    c:=\frac{i \kappa}{16ab \sqrt{1-\mu^2} \sigma_{11}^{3/2}}.
\end{displaymath}
Matching the $z_1^3$ terms of \eqref{eq:invariance_ordered} leads to
\begin{displaymath}
    \left[\mathcal{L}_{2 \mu A^\frac12} - 3 \lambda_{11}^+ \right] K_{(3,0)} =  -\mathcal{F}(\psi_{11}^+).
\end{displaymath}
This implies, via a procedure analogous to Section~\ref{sect:beam_mfd},
\begin{align*}
    &K_{(3,0)} = \frac{9c}{3 \lambda_{11}^+-\lambda_{11}^-}\psi_{11}^- - \frac{9c}{2\lambda_{11}^+}\psi_{11}^+ - \frac{3c}{3\lambda_{11}^+-\lambda_{31}^-} \psi_{31}^- +\frac{3c}{3\lambda_{11}^+-\lambda_{31}^+} \psi_{31}^+ \\
    &- \frac{3c}{3\lambda_{11}^+-\lambda_{13}^-} \psi_{13}^- +\frac{3c}{3\lambda_{11}^+-\lambda_{13}^+} \psi_{13}^+ + \frac{c}{3 \lambda_{11}^+-\lambda_{33}^-} \psi_{33}^- - \frac{c}{3 \lambda_{11}^+-\lambda_{33}^+} \psi_{33}^+.
\end{align*}

Matching the $z_1^2z_2$ terms, we get
\begin{displaymath}
    \big[\mathcal{L}_{2 \mu A^\frac12} - (2 \lambda_{11}^+ +\lambda_{11}^-) \big] K_{(2,1)}  = -3 \mathcal{F}(\psi_{11}^+)+ R_0 \psi_{11}^+,
\end{displaymath}
so that
\begin{displaymath}
    R_0 = -27c
\end{displaymath}
and
\begin{align*}
    &K_{(2,1)} = \frac{27c}{2 \lambda_{11}^+}\psi_{11}^- - \frac{9c}{2 \lambda_{11}^+ +\lambda_{11}^--\lambda_{31}^-} \psi_{31}^- +\frac{9c}{2 \lambda_{11}^+ +\lambda_{11}^--\lambda_{31}^+} \psi_{31}^+ \\
    &- \frac{9c}{2 \lambda_{11}^+ +\lambda_{11}^--\lambda_{13}^-} \psi_{13}^- +\frac{9c}{2 \lambda_{11}^+ +\lambda_{11}^--\lambda_{13}^+} \psi_{13}^+ + \frac{3c}{2 \lambda_{11}^+ +\lambda_{11}^--\lambda_{33}^-} \psi_{33}^- \\
    &- \frac{3c}{2 \lambda_{11}^+ +\lambda_{11}^--\lambda_{33}^+} \psi_{33}^+.
\end{align*}

The full embedding map and reduced dynamics with respect to polar coordinates are
\begin{subequations} \label{eq:K_and_reddyn_plate}
\begin{align}
    K(r,\theta) &= 2r \mathrm{Re} \left[ e^{i \theta} \psi_{11}^+ + r^2 \left( K_{(3,0)} e^{3 i \theta} + K_{(2,1)} e^{ i \theta} \right) \right] + o(r^3), \qquad \text{as } r \to 0, \\
    \dot{r} &=  -\mu \sqrt{\sigma_{11}} r , \\
    \dot{\theta} &=\sqrt{\sigma_{11}(1-\mu^2)}- \frac{27 \kappa}{16ab \sqrt{1-\mu^2} \sigma_{11}^{3/2}}r^2 = \vartheta(r).
\end{align}
\end{subequations}
The backbone curves, as $\kappa$ is varied, are shown in Figure~\ref{fig:5}.
Figure~\ref{fig:6} shows the approximate manifold embedded in $Y$, depicted via coordinates $\mathrm{Re} \langle K^{\leq 3}(z), \psi^{*-}_1 \rangle_Y$, $\mathrm{Im} \langle K^{\leq 3}(z), \psi^{*-}_1 \rangle_Y$ and $\Vert K^\perp (z) \Vert_Y$ (as in \eqref{eq:Kperp}).
On it, a sample trajectory is shown whose evolution is described by \eqref{eq:K_and_reddyn_plate}.
The same trajectory is also shown with respect to physical coordinates in Figure~\ref{fig:7}.

\begin{figure}
     \centering
     \includegraphics[width=0.5\textwidth]{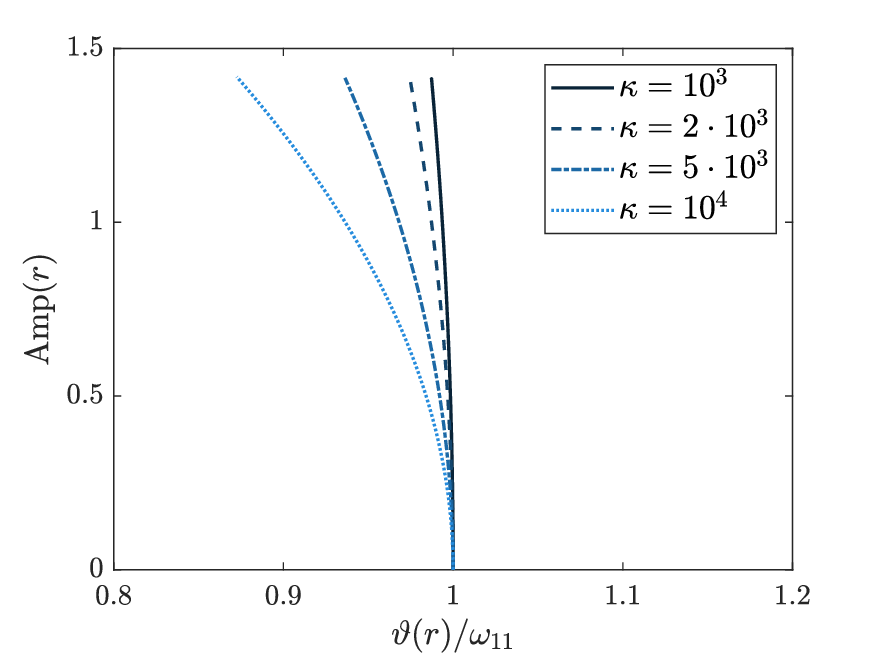}
     \caption{The images of the backbone curves $\mathcal{B}$ from \eqref{eq:backbone} over $r \in [0,1]$ (third order approximation, according to $K^{\leq 3}$) with varying $\kappa$ values, for the mechanical system \eqref{eq:plate_model}.
    The horizontal, frequency axis is normalized by the first (damped) natural frequency $\omega_{11} = \mathrm{Im}(\lambda_{11}^+)$. 
    The rest of the parameters are fixed as $\mu = 0.05$, $a = 1$ and $b = \pi/3$.}
     \label{fig:5}
\end{figure}

\begin{figure}
     \centering
     \includegraphics[width=0.5\textwidth]{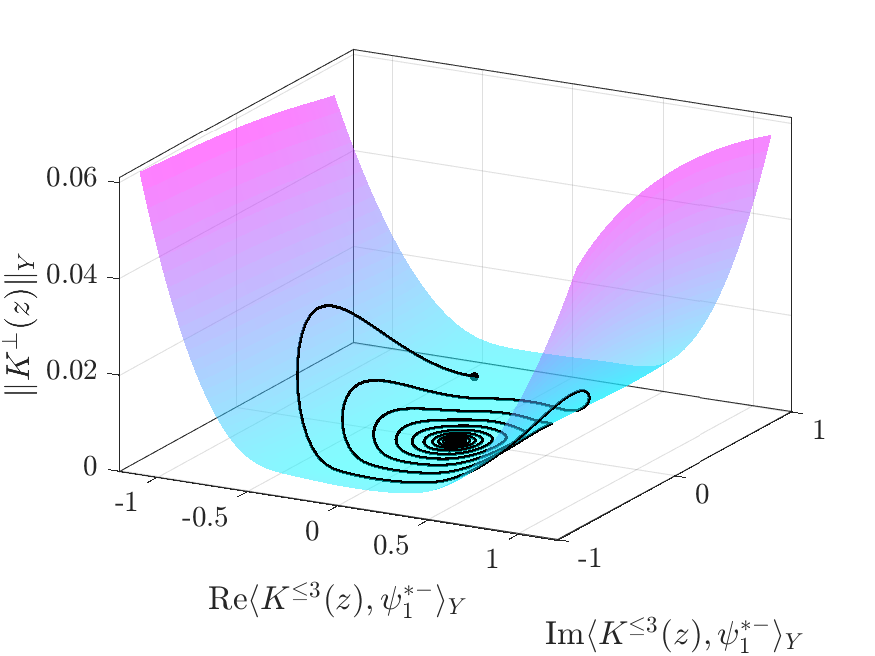}
    \caption{The approximate manifold embedded in phase space $Y$ for the mechanical system \eqref{eq:plate_model} with a sample trajectory whose evolution is described by \eqref{eq:K_and_reddyn_plate}, initiated at $(r_0,\theta_0) = (0.8,2)$.
    The parameters are fixed as $\mu = 0.05$, $\kappa = 10^4$, $a = 1$ and $b = 3/\pi$.}
    \label{fig:6}
\end{figure}

\begin{figure}
     \centering
     \includegraphics[width=1\textwidth]{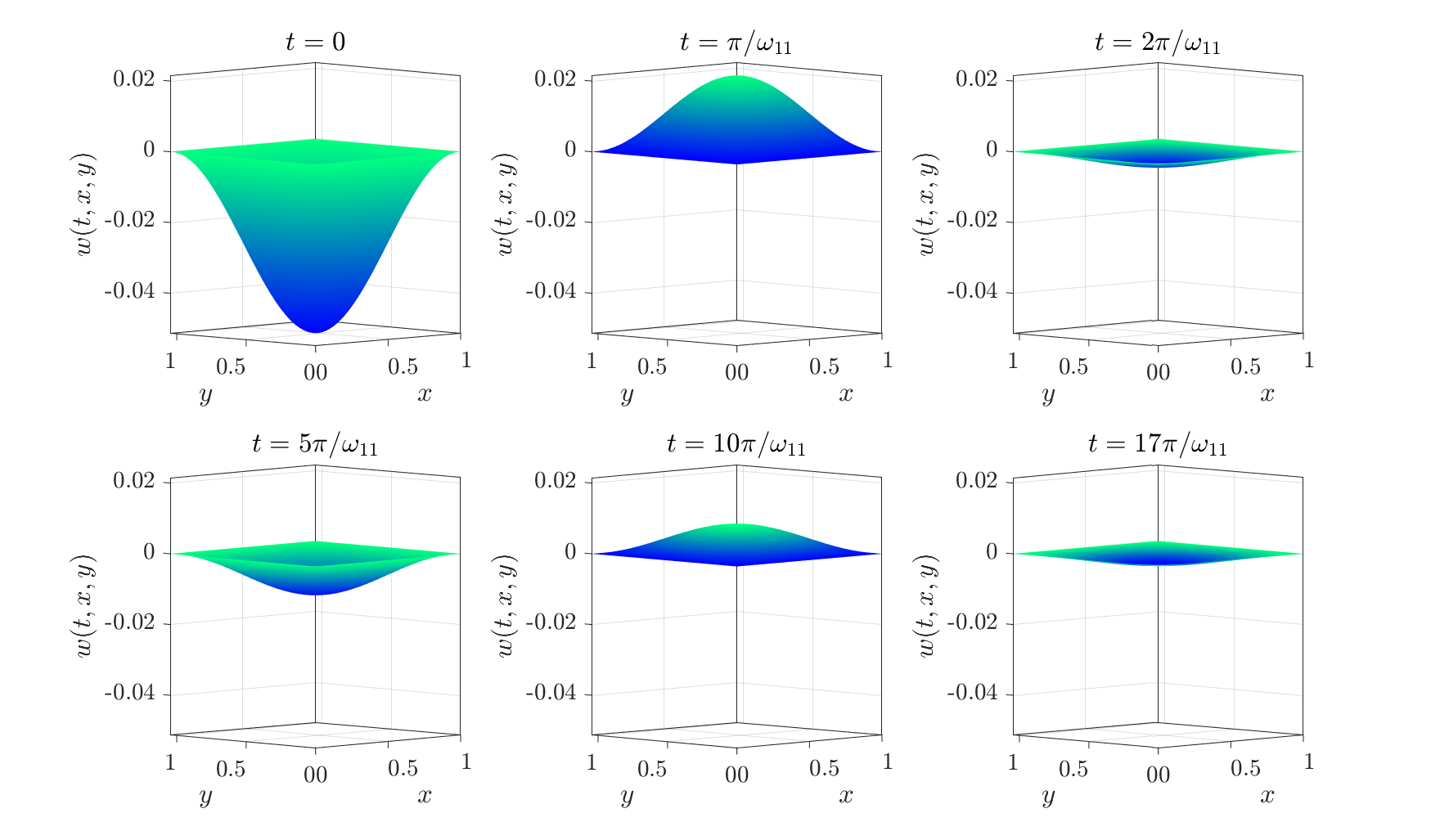}
    \caption{The sample trajectory from Figure~\ref{fig:6} shown using physical coordinates $(t,x,y,w(t,x,y))$. 
    The time instants shown are integer multiples of the half-period $\pi/\omega_{11}$ associated to the (damped) natural frequency $\omega_{11} = \mathrm{Im}(\lambda_{11}^+)$.
    The parameters are fixed as $\mu = 0.05$, $\kappa = 10^4$, $a = 1$ and $b = 3/\pi$.}
    \label{fig:7}
\end{figure}

\section{Conclusions}
\label{sect:conclusions}

In this paper, we have derived existence and regularity results for spectral submanifolds (SSMs) in evolutionary systems generated by PDEs describing nonlinear continuum vibrations \ref{C1}.
This provides previously missing, rigorous mathematical support for SSM-based model reduction procedures commonly employed in finite-element models by confirming their validity in the limiting case governed by the PDE.
Our results enable the notions and physical interpretations drawn from these reduced models the literature has grown to embrace 
to be transferred to the original PDE describing the physical continuum \ref{C2}.

We have also proposed a procedure for hand calculations of SSMs in mechanical systems, generalizing the work of Kogelbauer \& Haller \cite{kogelbauer2018rigorous} through the use of a bi-orthogonal frame spanning the phase space $Y$ \cite[Appendix~A]{chen1989proof}.
While more classical invariant manifolds of mechanical PDEs were already computed via expansions as early as the nineties \cite{roberts1993invariant,shaw1994normal}, the reader may still find some aspects of the systematic nature of our procedure appealing.
We have applied this procedure to two examples: a nonlinear axially restrained elastic beam and a thin plate model obeying Kirchhoff-Love theory \ref{C3}.

We emphasize that a necessary precondition for the hand calculations performed in Sections \ref{sect:beam_mfd} and \ref{sect:plate_computation} is the explicit knowledge of eigenfunctions $\{\phi_n\}_{n \in \mathbb{N}}$ of the elastic operator $A$.
This assumes the physical domain $\Omega$ which the continuum occupies to be simple.
This poses no limitations to beam (or other one-dimensional) examples, but for two- or higher-dimensional examples, $\Omega$ may be severely constrained; e.g., our plate example required a rectangular $\Omega$.
For more complicated domains, numerical techniques have to be employed for the extraction of eigenfunctions.

While this paper was primarily concerned with purely mechanical continuum models, alternative physical phenomena may also fit the setup of Section~\ref{sect:setup}.
For instance, the fluid-conveying pipe model studied in \cite{li2024data} falls into this category (with $\alpha = 1$), under reasonable boundary conditions for which a Poincaré inequality holds for the first derivatives.

\subsection*{Acknowledgements}
We thank Gábor Stépán for suggesting this line of research to us.

\appendix

\section{Identification of the space \texorpdfstring{$\dom(A^\frac12)$}{} for the elastic operator \texorpdfstring{\eqref{eq:elastic_A}}{}}
\label{appendix:domA12}

Disposing of the constant coefficients, the problem reduces to the study of the fourth order differential operator
\begin{displaymath}
    Aw : = \frac{d^4w}{dx^4}
\end{displaymath}
with domain $\dom(A) = H^4([0,l]) \cap \big( \bigcap_{\eta \in F} \ker(\eta) \big)$, with boundary conditions $F \subset H^{-4}([0,l])$ chosen  such that \eqref{eq:SAcond} holds. 

We recall the following result from \cite{russell1988positive}, for the special case of interest here when $A$ is strictly positive.

\begin{theorem}[Theorem 3, \cite{russell1988positive}] \label{thm:P}
    There exists an invertible bounded linear operator $P :L^2([0,l]) \to L^2([0,l])$ such that
    \begin{equation}
        -\frac{\partial^2w}{\partial x^2} = PA^\frac12 w, \qquad w \in \dom(A^\frac12).
        \label{eq:deriv2_vs_A12}
    \end{equation}
\end{theorem}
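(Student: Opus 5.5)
The plan is to build $P$ directly from its defining identity, using that $A^{\frac12}$ is an isomorphism. Since $A$ is strictly positive and self-adjoint with compact resolvent (assumption \ref{A1}), $A^{\frac12}:\dom(A^{\frac12})\to L^2([0,l])$ is bijective with bounded inverse $A^{-\frac12}$. We therefore \emph{define}
\begin{displaymath}
    P v := -\frac{\partial^2}{\partial x^2}\big(A^{-\frac12} v\big), \qquad v \in L^2([0,l]),
\end{displaymath}
so that \eqref{eq:deriv2_vs_A12} holds by construction. The content of the theorem is then that $P$ is well defined and bounded on $L^2([0,l])$, and that it is invertible.

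\textbf{Boundedness.} I would show that $\dom(A^{\frac12})$ with its graph norm embeds continuously into $H^2([0,l])$. The route is interpolation. Since $\dom(A)$ is a closed subspace of $H^4([0,l])$ and the graph norm of $A$ is equivalent to the $H^4$ norm there (by elliptic regularity for $d^4/dx^4$ with the boundary conditions $F$), the complex interpolation space satisfies
\begin{displaymath}
    [L^2,\dom(A)]_{\frac12} = \dom(A^{\frac12}).
\end{displaymath}
This space embeds continuously into $[L^2,H^4]_{\frac12} = H^2$. Composing $A^{-\frac12}$ with this embedding and then with $\partial_x^2 : H^2 \to L^2$ gives boundedness of $P$.

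\textbf{Quadratic form identity.} Next, for $w\in\dom(A)$ I would integrate by parts twice to get
\begin{displaymath}
    \Vert A^{\frac12} w\Vert^2 = \langle Aw,w\rangle = \Vert w''\Vert^2 + Q(w).
\end{displaymath}
Here $Q(w)$ is the boundary bracket appearing in \eqref{eq:SAcond_2}. This is a finite-rank quadratic form in the traces $w(0),w'(0),w(l),w'(l)$, together with second-derivative traces that the boundary conditions in $F$ express in terms of the lower-order ones. The identity extends by density to $\dom(A^{\frac12})$, since every trace of order at most $1$ is continuous on $H^2$. Substituting $w = A^{-\frac12}v$ gives
\begin{displaymath}
    P^*P = \mathrm{id} - K,
\end{displaymath}
with $K$ of finite rank, hence compact. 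Consequently $P^*P$ is Fredholm of index zero. From this, $P$ has closed range and a finite-dimensional kernel, and $P$ is invertible as soon as it is both injective and surjective. The index computation below is meant to reduce this to injectivity alone. In the special case \eqref{eq:ass_dom12} we have $Q\equiv0$, so $P$ is an isometry, consistent with the remark in Section~\ref{sect:beam_damping}.

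\textbf{Invertibility (main obstacle).} Injectivity of $P$ is equivalent to the implication
\begin{displaymath}
    w\in\dom(A^{\frac12}),\quad w''=0 \quad\Longrightarrow\quad w=0,
\end{displaymath}
that is, to the statement that $\dom(A^{\frac12})$ contains no nonzero affine function. For this step I would use \eqref{eq:domA12} together with strict positivity of $A$. An affine function in $\dom(A^{\frac12})$ must satisfy every boundary condition in $G$. If $G$ alone fails to exclude affine functions, the conditions in $F\setminus G$, which involve higher derivatives, vanish trivially on affine functions; then such a function would lie in $\dom(A)$ with $Aw=0$, contradicting strict positivity. Hence $G$ must exclude affine functions.

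For surjectivity, given $f\in L^2$ I would solve $-w''=f$ within $H^2\cap\bigcap_{\eta\in G}\ker\eta$. The two-dimensional affine freedom is matched against the conditions in $G$. A Fredholm index count, namely index zero for a compact perturbation of a map with index zero, closes the argument once injectivity holds. The delicate point, which I expect to require care, is checking that the number of conditions in $G$ matches the affine freedom for every admissible $F$. If it does not, the fallback is to pass to a quotient by the affine functions, as noted after \eqref{eq:conds_on_BCs}.
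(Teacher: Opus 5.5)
The gap is the invertibility step, and it cannot be closed under the hypotheses you work with. Your $P$ is the operator the paper describes after the statement ($P\phi_k=-\lambda_k^{-\frac12}\phi_k''$); the paper gives no proof of its own and only cites Russell. Your first two steps are sound: interpolation gives boundedness, and $P^*P=\mathrm{id}-K$ with $K$ of rank at most four is correct. (One justification needs fixing. That $Q$ depends only on $w(0),w'(0),w(l),w'(l)$ comes from the Lagrangian structure of self-adjoint boundary conditions, not from $F$ expressing higher traces in terms of lower ones; hinged conditions do not determine $w'''$.)

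\textbf{Injectivity.} You claim the conditions in $F\setminus G$ vanish on affine functions. This fails for mixed conditions. Take elastically supported ends: $w''(0)=w''(l)=0$, $w'''(0)=-c_0w(0)$, $w'''(l)=c_lw(l)$, with $c_0,c_l>0$.
\begin{itemize}
\item $A$ is self-adjoint and strictly positive, with $\langle Aw,w\rangle_X=\Vert w''\Vert_X^2+c_0|w(0)|^2+c_l|w(l)|^2$.
\item $G=\emptyset$, and $\dom(A^{\frac12})$ is the form domain $H^2([0,l])$, which contains the constant $1$.
\item Hence $PA^{\frac12}1=-1''=0$ although $A^{\frac12}1\neq0$, so $P$ is not injective.
\end{itemize}

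\textbf{Surjectivity.} Solving $-w''=f$ in $H^2\cap\bigcap_{\eta\in G}\ker\eta$ presupposes the equality \eqref{eq:domA12}. The paper derives that equality from this very theorem, and only under \eqref{eq:ass_dom12}. Your index-zero claim also has no basis: only $P^*P$, not $P$, is a compact perturbation of the identity, and the unilateral shift $S$ has $S^*S=\mathrm{id}$ but index $-1$. Exactly this happens for the clamped--clamped beam:
\begin{itemize}
\item \eqref{eq:ass_dom12} holds, $K=0$, and $\dom(A^{\frac12})=H^2_0$.
\item For $w\in H^2_0$ one has $\int_0^l w''\,dx=\int_0^l xw''\,dx=0$.
\item So $P$ is an isometry onto $\{1,x\}^\perp$, not onto $L^2$.
\end{itemize}
Passing to a quotient by affine functions does not repair this, because here the defect is a cokernel, not a kernel.

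\textbf{What your setup actually gives.} Since $H^2_0\subset\dom(A^{\frac12})\subset H^2$ with equivalent norms, $\dom(A^{\frac12})=\{w\in H^2 : (w(0),w'(0),w(l),w'(l))\in V\}$ for some subspace $V\subset\mathbb{C}^4$. Then $P$ is invertible if and only if $V$ is a complement in $\mathbb{C}^4$ of the traces of affine functions. This holds, e.g.\ for hinged--hinged or clamped--free, and fails for clamped--clamped and for the spring example. It is a genuine restriction on $F$ that must be imposed as a hypothesis; presumably it is built into Russell's setting. Without it, the statement you set out to prove is false.
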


Explicitly, the map is given on the basis $\{\phi_k\}_{k \in \mathbb{N}}$ (assumed orthonormal in this section) by $P\phi_k  =- \lambda_k^{-\frac12} \partial_{xx} \phi_k = P^{-1} \phi_k$, and if one denotes $\psi_k := P\phi_k$, $k \in \mathbb{N}$, then $\{ \psi_k / \Vert \psi_k \Vert_X \}_{k \in \mathbb{N}}$ form on orthonormal basis of $X$ \cite[page 765]{russell1988positive}.
Note that it is a consequence of Theorem~\ref{thm:P} that $\Vert \psi_k \Vert_X$ is bounded from above and away from $0$ uniformly in $k \in \mathbb{N}$.

An immediate corollary is the following.

\begin{corollary} \label{cor:domA12}
    If $w \in \dom(A^\frac12)$, then there exist $c,C>0$ such that
    \begin{equation}
        c \Vert w \Vert_{A^{\frac12}} \leq \Vert w\Vert_{H^2([0,l])} \leq C \Vert w \Vert_{A^{\frac12}}.
        \label{eq:equiv_norm}
    \end{equation}
    In particular, $\dom(A^\frac12)$ is continuously embedded in $ H^2([0,l])$.
\end{corollary}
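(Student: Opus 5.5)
Using Theorem~\ref{thm:P}, we can compare $\Vert w\Vert_{A^{\frac12}}$, which we read as the graph norm $\Vert w\Vert_X + \Vert A^{\frac12}w\Vert_X$ (equivalently $\Vert A^{\frac12} w\Vert_X$, since $A$ is strictly positive), with $\Vert w\Vert_{H^2([0,l])}$. The link is the identity $-\partial_{xx} w = PA^{\frac12}w$ for $w\in\dom(A^{\frac12})$, together with boundedness and invertibility of $P$ on $L^2([0,l])$. On $[0,l]$ the $H^2$ norm is equivalent to $\Vert w\Vert_{L^2}+\Vert \partial_{xx}w\Vert_{L^2}$, because the first derivative is controlled by interpolation (Gagliardo--Nirenberg or an Ehrling-type inequality on a bounded interval). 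Both inequalities in \eqref{eq:equiv_norm} then follow once we can pass between $\Vert \partial_{xx} w\Vert_X$ and $\Vert A^{\frac12}w\Vert_X$.

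\emph{Upper bound.} For $w\in\dom(A^{\frac12})$ we have $\Vert\partial_{xx}w\Vert_{L^2} = \Vert PA^{\frac12}w\Vert_{L^2}\le \Vert P\Vert\,\Vert A^{\frac12}w\Vert_X$. Adding $\Vert w\Vert_{L^2}$ and applying the interpolation equivalence gives $\Vert w\Vert_{H^2}\le C\Vert w\Vert_{A^{\frac12}}$. The weight $\rho$ in $\langle\cdot,\cdot\rangle_X$ is constant, so $X$ and $L^2$ norms differ only by a constant.

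\emph{Lower bound.} Invertibility gives $A^{\frac12}w = -P^{-1}\partial_{xx}w$, hence $\Vert A^{\frac12}w\Vert_X\le \Vert P^{-1}\Vert\,\Vert\partial_{xx}w\Vert_{L^2}\le \Vert P^{-1}\Vert\,\Vert w\Vert_{H^2}$. This yields $c\Vert w\Vert_{A^{\frac12}}\le \Vert w\Vert_{H^2}$. The embedding statement follows from the upper bound, since $\dom(A^{\frac12})$ with its graph norm maps continuously into $H^2([0,l])$. The constants $c,C$ are uniform in $w$; the "there exist" in the statement is trivially satisfied with constants independent of $w$.

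The one point needing care is that Theorem~\ref{thm:P} presupposes $\partial_{xx}w$ makes sense in $L^2$ for every $w\in\dom(A^{\frac12})$, i.e.\ that $\dom(A^{\frac12})\subset H^2$ to begin with. If the theorem is only taken on $\dom(A)$, I would argue by density. $\dom(A)$ is a core for $A^{\frac12}$, so for $w\in\dom(A^{\frac12})$ we pick $w_n\in\dom(A)$ with $w_n\to w$ in graph norm. The upper bound applied to $w_n-w_m$ shows $(w_n)$ is Cauchy in $H^2$, so its $H^2$-limit coincides with the $L^2$-limit $w$. Hence $w\in H^2$, and both inequalities pass to the limit. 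I expect this closure argument to be the main obstacle; everything else is bookkeeping with $\Vert P\Vert$ and $\Vert P^{-1}\Vert$.
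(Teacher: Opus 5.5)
Your proof is correct and follows the paper's argument: the lower bound comes directly from $A^{\frac12}w = -P^{-1}\partial_{xx}w$, and the upper bound from $\Vert \partial_{xx}w\Vert \le \Vert P\Vert\,\Vert A^{\frac12}w\Vert_X$ combined with an interpolation inequality that controls the first derivative. Your closing density argument goes beyond the paper, which applies Theorem~\ref{thm:P} directly on $\dom(A^{\frac12})$ even though Section~\ref{sect:beam_damping} quotes the identity only on $\dom(A)$; the argument is sound and fills that small gap.
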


\begin{proof}
    The first inequality follows directly from \eqref{eq:deriv2_vs_A12}.
    The second one follows from an application of \eqref{eq:deriv2_vs_A12} once more alongside an interpolation inequality. 
\end{proof}

If $w \in \dom(A^\frac12)$, we moreover have that there exists a sequence of eigenvectors $(\phi_j)_{j \in \mathbb{N}}$ of $A$ such that $\Vert w-\phi_j \Vert_{A^\frac12} \to 0$ as $j \to \infty$.
Hence, by the second inequality in \eqref{eq:equiv_norm}, we have that $w \in \bigcap_{\eta \in G} \ker(\eta)$, where $G := F \cap H^{-2}([0,l])$.

So far we have
\begin{equation}
      \dom(A^\frac12) \subset H^{2}([0,l]) \cap \left( \bigcap_{\eta \in G} \ker(\eta) \right).
      \label{eq:domA12subset}
\end{equation}
If moreover the boundary conditions are such that \eqref{eq:ass_dom12} holds, 
we have equality in \eqref{eq:domA12subset}.
To see this, note that 
\begin{displaymath}
    \dom(A^\frac12)  = \left\{ w \in L^2([0,l]) \; \Big| \; \sum_{k=1}^\infty \lambda_k \langle w , \phi_k \rangle^2_X < \infty \right\}
\end{displaymath}
and
\begin{align*}
    \lambda_k^\frac12 \langle w , \phi_k \rangle_X &= \langle w ,A^\frac12 \phi_k \rangle_X \\
    &= \langle w, P ( - \partial_{xx} \phi_k)\rangle_X \\
    &= -\langle  \partial_{xx}w,P \phi_k \rangle_X \tag{$\dagger$}   \label{eq:IBP} \\ 
    &= -\langle \partial_{xx}w,\psi_k \rangle_X.
\end{align*}
Here $P$ commutes with $\partial_{xx}$ due to its explicit form when applied to $\phi_k$.
The integration by parts in \eqref{eq:IBP} is justified by the following lemma.
Since the $\{\psi_k\}_{k \in \mathbb{N}}$ are mutually orthogonal and span $X$, which are moreover bounded below and above, we obtain the desired conclusion. 

\begin{lemma}
    Suppose \eqref{eq:ass_dom12}. 
    Then, for all $ k \in \mathbb{N}$,
    \begin{displaymath}
        [w \psi'_k - w' \psi_k]_0^l = 0, \qquad \text{for all } w \in H^{2}([0,l]) \cap \left( \bigcap_{\eta \in G} \ker(\eta) \right).
    \end{displaymath}
\end{lemma}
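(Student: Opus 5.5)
The identity follows from a symmetry property of the boundary form built from \eqref{eq:ass_dom12}. That property holds first for test functions in $\dom(A)$ and is then transferred to all of $H^{2}([0,l]) \cap \bigcap_{\eta \in G}\ker(\eta)$, using that the boundary term only sees the traces of $w$ and $w'$.

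First, from the explicit form $\psi_k = P\phi_k = -\lambda_k^{-\frac12}\phi_k''$ we get $\psi_k' = -\lambda_k^{-\frac12}\phi_k'''$. Hence, up to the nonzero factor $-\lambda_k^{-\frac12}$, the claimed identity reads
\begin{displaymath}
b(w,\phi_k) := \big[ w\,\phi_k''' - w'\,\phi_k'' \big]_0^l = 0 .
\end{displaymath}
Consider the bilinear form $b(v,u) = [v u''' - v' u'']_0^l$ on $\dom(A)\times\dom(A)$. The self-adjointness condition in \eqref{eq:conds_on_BCs} (with $EI$ constant) says precisely that $b(v,u) = b(u,v)$. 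Assumption \eqref{eq:ass_dom12} says that $b(u,u) = 0$ for all $u \in \dom(A)$. By polarization, $b(u+v,u+v) = 0$ together with symmetry gives $b(v,u) = 0$ for all $u,v \in \dom(A)$. Taking $u = \phi_k \in \dom(A)$, the identity holds for every $w \in \dom(A)$.

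Second, extend this to $w \in H^{2}([0,l]) \cap \bigcap_{\eta\in G}\ker(\eta)$. The map $w \mapsto b(w,\phi_k)$ depends only on the four traces $(w(0), w'(0), w(l), w'(l))$, which are continuous on $H^2$. It therefore suffices to construct, for any such $w$, a function $\hat w \in \dom(A)$ (say a smooth polynomial) with the same traces of $w$ and $w'$ at both endpoints. Then $b(w,\phi_k) = b(\hat w,\phi_k) = 0$.

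The main obstacle is the construction of $\hat w$, which is a finite-dimensional linear algebra problem on the trace vector $(w, w', w'', w''')|_{\{0,l\}} \in \mathbb{C}^8$. I would regard each $\eta \in F$ as a linear functional of these traces; this holds for the boundary conditions under consideration, and polynomials realize arbitrary trace vectors. After replacing $F$ by a suitable basis of its span (which does not change $\dom(A)$), split it so that the functionals in $G$ involve only $w,w'$ traces. The remaining functionals in $F\setminus G$ then have linearly independent parts in the $(w'',w''')$ traces: if some combination had zero higher-order part, it would be bounded on $H^2$ and hence belong to the span of $G$.

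Given the prescribed $w,w'$ traces, which already satisfy the $G$-conditions, the conditions in $F\setminus G$ become a linear system for the four unknown $w'',w'''$ traces. Its coefficient matrix has full row rank, so the system is solvable. Choosing a cubic-Hermite-type polynomial with all eight traces prescribed then yields the desired $\hat w \in H^4 \cap \bigcap_{\eta\in F}\ker(\eta) = \dom(A)$.
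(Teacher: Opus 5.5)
Your argument is correct, with one caveat about $G$ noted below, and it follows a genuinely different route from the paper's.

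The paper proceeds in three steps:
\begin{itemize}
\item It kills the off-diagonal terms $[\phi_j\phi_k'''-\phi_j'\phi_k'']_0^l$, $j\neq k$, by comparing $\langle\phi_j,A\phi_k\rangle_X=\lambda_k\delta_{jk}$ with the orthogonality of Russell's functions $\psi_j=P\phi_j$.
\item It uses \eqref{eq:ass_dom12} on the diagonal $j=k$.
\item It passes to general $w$ by expanding $w=\sum_j\langle w,\phi_j\rangle_X\phi_j$ in $H^2$. This rests on density of $\mathrm{span}\{\phi_j\}$ in $H^2([0,l])\cap\bigcap_{\eta\in G}\ker(\eta)$.
\end{itemize}

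Your polarization step gives $b\equiv0$ on all of $\dom(A)\times\dom(A)$ from self-adjointness and \eqref{eq:ass_dom12} alone. It makes no use of the orthogonality of the $\psi_j$. Your trace-lifting step then replaces the density claim by finite-dimensional linear algebra.

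That replacement is the real gain. By Corollary~\ref{cor:domA12}, the $H^2$-closure of $\mathrm{span}\{\phi_j\}$ is exactly $\dom(A^{\frac12})$. So the paper's density assertion is equivalent to the inclusion $H^2([0,l])\cap\bigcap_{\eta\in G}\ker(\eta)\subset\dom(A^{\frac12})$, which is precisely what this lemma is used to prove. Your proof does not presuppose it.

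What you pay is having to use the structure of $F$. You assume its elements are combinations of endpoint traces of $w,\dots,w'''$, which is the intended setting. For general $\eta\in H^{-4}([0,l])$ the lifting still works, by a duality argument for the finite-rank map $\hat w\mapsto\big(\hat w(0),\hat w'(0),\hat w(l),\hat w'(l),(\eta(\hat w))_{\eta\in F}\big)$ on $H^4([0,l])$.

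There are two smaller points.

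\textbf{Choice of basis for $F$.} Changing the basis of $\mathrm{span}(F)$ leaves $\dom(A)$ unchanged, but it can change $G=F\cap H^{-2}([0,l])$, and with it the statement. For example, write the hinged conditions as $F=\{w(0)+w''(0),\,w''(0),\,w(l),\,w''(l)\}$. Then $G=\{w(l)\}$, and for $w$ with $w(l)=0\neq w(0)$ one gets $[w\psi_k'-w'\psi_k]_0^l=-w(0)\psi_k'(0)\neq0$. So the requirement that $F\cap H^{-2}([0,l])$ span $\mathrm{span}(F)\cap H^{-2}([0,l])$ is a hypothesis on how $F$ is presented. The paper's proof needs it implicitly as well. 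You should state it as such, not as a harmless normalization.

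\textbf{Interpolating polynomial.} Prescribing all eight traces requires a Hermite interpolating polynomial of degree at most $7$, not a cubic.
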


\begin{proof}
    First note that the span of $\{\phi_k\}_{k \in \mathbb{N}}$ is dense in $H^{2}([0,l]) \cap \left( \bigcap_{\eta \in G} \ker(\eta) \right)$ with respect to the $H^2$ norm.
    We may hence write
    \begin{displaymath}
        w = \sum_{j=1}^\infty \langle w , \phi_j \rangle_X \phi_j,
    \end{displaymath}
    with the latter converging in the $H^2$ norm.
    By definition,
    \begin{align*}
        [w \psi'_k - w' \psi_k]_0^l &= - \lambda_k^{-\frac12} [w \phi_k'''-w'\phi_k'']^l_0 \\
        &=- \lambda_k^{-\frac12} \sum_{j=1}^\infty \langle w , \phi_j \rangle_X [\phi_j \phi_k'''-\phi_j'\phi_k'']^l_0.
    \end{align*}
    Since 
    \begin{displaymath}
        \lambda_k \delta_{jk}=\langle \phi_j, A\phi_k \rangle_X = [\phi_j \phi_k'''-\phi_j'\phi_k'']^l_0 + \lambda^\frac12_j \lambda^\frac12_k\langle\psi_j,\psi_k\rangle_X
    \end{displaymath}
    and the $\{\psi_k\}_{k \in \mathbb{N}}$ are pairwise orthogonal, we have that $[\phi_j \phi_k'''-\phi_j'\phi_k'']^l_0 = 0$ whenever $j \neq k$.
    When $j = k$, \eqref{eq:ass_dom12} implies $[\phi_j \phi_j'''-\phi_j'\phi_j'']^l_0=0$, hence the claim of the lemma follows.
\end{proof}

\section{An embedding property of \texorpdfstring{$Y^\beta$}{}}
\label{appendix:domLB}

We prove the following Lemma.

\begin{lemma} \label{lemma:embedding}
    Suppose \ref{A1} with $X = L^2(\Omega)$ for a $\Omega$ either bounded Lipschitz domain in $\mathbb{R}^d$, $\mathbb{R}^d$ itself, or a torus $\Omega = \mathbb{T}^d$, and that elliptic regularity holds for both $A$ and $A^\frac12$, i.e., 
    \begin{subequations} \label{eq:ellipticity_ass}
    \begin{align}
        \Vert u \Vert_{H^{k}(\Omega)} &\lesssim \Vert u \Vert_{A^\frac12}, & & u \in \dom(A^\frac12), \\
        \Vert u \Vert_{H^{2k}(\Omega)} &\lesssim \Vert u \Vert_{A}, & & u \in \dom(A),
    \end{align}   
    \end{subequations}
    for some $k \in \mathbb{N}$.
    Fix $\beta \in (0,1)$.
    Then the fractional power domain $Y^\beta$ associated to $\mathcal{L}_{2\mu A^\frac12}$, defined below Definition~\ref{def:fractpowers} with, is continuously embedded in $H^t(\Omega) \times H^s(\Omega)$ equipped with the norm $\Vert (u,v) \Vert_{H^t \times H^s} =\sup \{ \Vert u \Vert_{H^t},\Vert v \Vert_{H^s} \}$ for $t<k+k\beta$ and $s<k\beta$. 
\end{lemma}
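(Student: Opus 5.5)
The plan is to reduce everything to the scale of spaces generated by $A^{\frac12}$ via the bi-orthogonal eigenbasis of $\mathcal{L} := \mathcal{L}_{2\mu A^\frac12}$ recalled in Section~\ref{sect:spectrum}, and then to transfer to Sobolev spaces by interpolating the elliptic regularity estimates \eqref{eq:ellipticity_ass}.

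\textbf{Step 1: domain of $\mathcal{L}$.} For $\alpha=\frac12$ one has $\dom(\mathcal{L}) = \dom(A)\times\dom(A^\frac12)$. Since $\mathcal{L}$ is invertible, the graph norm is equivalent to $\Vert u\Vert_A + \Vert v\Vert_{A^\frac12}$. Combined with \eqref{eq:ellipticity_ass}, this gives a continuous embedding $\dom(\mathcal{L}) \hookrightarrow H^{2k}\times H^{k}$. Likewise $Y=\dom(A^\frac12)\times X \hookrightarrow H^k\times H^0$.

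\textbf{Step 2: identify $Y^\beta$ with complex interpolation.} In the eigenbasis \eqref{eq:y_expansion} with $\alpha=\frac12$, each pair $\psi_n^\pm$ spans an $\mathcal{L}$-invariant two-dimensional subspace $\mathrm{span}\{(\phi_n,0),(0,\phi_n)\}$. On that subspace $\mathcal{L}$ acts as $\sigma_n^{1/2}$ times the fixed matrix $\begin{pmatrix}0&\sigma_n^{-1/2}\\-\sigma_n^{1/2}&-2\mu\end{pmatrix}$, written in the rescaled coordinates $(\sigma_n^{1/2}u_n, v_n)$. In these coordinates the matrix is the $n$-independent $M=\begin{pmatrix}0&1\\-1&-2\mu\end{pmatrix}$, and $Y$ is an orthogonal sum of these blocks. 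Hence $-\mathcal{L}$ is unitarily similar, up to the $n$-independent similarity diagonalizing $M$ (possible since $\mu<1$), to a normal operator with eigenvalues $\sigma_n^{1/2}(\mu\mp i\sqrt{1-\mu^2})$. This is consistent with the uniformly bounded bi-orthogonal constants $v_m^\pm$ and $k_n=1$.

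It follows that $-\mathcal{L}$ has bounded imaginary powers, and that $(-\mathcal{L})^{-\beta}$ acts blockwise as $\sigma_n^{-\beta/2}$ times a uniformly bounded, uniformly invertible matrix. Therefore
\[
Y^\beta = \dom(A^{\frac{1+\beta}{2}})\times \dom(A^{\frac\beta2}),
\]
with equivalent norms. Equivalently, $Y^\beta = [Y,\dom(\mathcal{L})]_\beta$.

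\textbf{Step 3: interpolate.} Complex interpolation between the embeddings of Step 1 gives
\[
Y^\beta \hookrightarrow [H^k,H^{2k}]_\beta\times[H^0,H^k]_\beta = H^{k+k\beta}\times H^{k\beta}.
\]
The identification $[H^a(\Omega),H^b(\Omega)]_\theta = H^{(1-\theta)a+\theta b}(\Omega)$ holds on $\mathbb{R}^d$, on $\mathbb{T}^d$, and on bounded Lipschitz domains via a universal (Rychkov) extension operator. Continuous embedding into $H^t\times H^s$ for $t<k+k\beta$ and $s<k\beta$ then follows trivially. The strict inequality gives slack: if the interpolation identification is only available in a weaker form, for instance with real interpolation or Besov spaces, one can instead use $[\cdot,\cdot]_{\beta,2}$ or $(\cdot,\cdot)_{\beta,\infty}$ and lose an $\epsilon$ of regularity, which the statement permits.

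\textbf{Main obstacle.} I expect the hardest step to be Step 2, justifying $Y^\beta=[Y,\dom(\mathcal{L})]_\beta$, i.e.\ that fractional powers defined through the Gamma integral of Definition~\ref{def:fractpowers} coincide with the interpolation space. I would try the explicit block diagonalization first, because it makes $(-\mathcal{L})^{i s}$ manifestly bounded. If uniform bounds fail, for instance for $\alpha\ne\frac12$, which is not needed here, I would fall back on the general sandwich $(Y,\dom\mathcal{L})_{\beta,1}\hookrightarrow Y^\beta\hookrightarrow (Y,\dom\mathcal{L})_{\beta,\infty}$, valid for any sectorial operator. That sandwich suffices precisely because the target exponents $t,s$ are strict.
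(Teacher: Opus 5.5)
Your proof is correct. It reaches a slightly stronger conclusion than the lemma, by a different route from the paper.

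The paper never identifies $Y^\beta$. It proves only the moment-type inequality $\Vert (u,v)\Vert_{H^t\times H^s}\le C\Vert\mathcal{L}_{2\mu A^\frac12}(u,v)\Vert_Y^{1-\theta}\Vert(u,v)\Vert_Y^{\theta}$ on $\dom(\mathcal{L}_{2\mu A^\frac12})$, with $\theta=1-s/k$. This comes from the multiplicative (Brezis--Mironescu) Sobolev interpolation inequality combined with your Step~1 estimates, namely $\Vert(u,v)\Vert_{H^k\times L^2}\lesssim\Vert(u,v)\Vert_Y$ and $\Vert(u,v)\Vert_{H^{2k}\times H^k}\lesssim\Vert\mathcal{L}_{2\mu A^\frac12}(u,v)\Vert_Y$. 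The paper then invokes Henry's criterion (pp.~28--29): such an inequality with exponent $1-\theta<\beta$ on $\Vert\mathcal{L}x\Vert$ forces $Y^\beta\hookrightarrow H^t\times H^s$. That criterion loses an $\epsilon$, which is where the strict inequalities $t<k+k\beta$, $s<k\beta$ come from. In interpolation language it is essentially your fallback sandwich: the moment inequality says $H^t\times H^s$ is of class $J_{1-\theta}$, and $Y^\beta\hookrightarrow(Y,\dom\mathcal{L})_{\beta,\infty}\hookrightarrow(Y,\dom\mathcal{L})_{1-\theta,1}$.

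Your Step~2 instead uses that $B=2\mu A^\frac12$ is a function of $A$. In the coordinates $(\sigma_n^{1/2}u_n,v_n)$, $\mathcal{L}$ becomes $\bigoplus_n\sigma_n^{1/2}M$ with $M$ independent of $n$. The Gamma-integral powers are therefore computed blockwise, giving exactly $Y^\beta=\dom(A^{\frac{1+\beta}{2}})\times\dom(A^{\frac\beta2})$, and interpolating your Step~1 embeddings yields the endpoint case $t=k+k\beta$, $s=k\beta$.

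Your route buys sharpness and an explicit description of $Y^\beta$. It pays with two dependencies:
\begin{itemize}
\item the commuting block structure, which would not survive a damping operator that is not a function of $A$;
\item the complex-interpolation identification of Sobolev spaces on Lipschitz domains, via a universal extension operator.
\end{itemize}
The paper's argument uses only the two Step~1 estimates, a multiplicative interpolation inequality, and a result valid for any sectorial operator. It would therefore go through for any generator satisfying those estimates.
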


\begin{proof}
    We may increase either $t$ or $s$, or both, so that $s = t-k > 0$, without loss of generality by Sobolev embedding.
    Fix
    \begin{displaymath}
        \theta =  1-\frac{s}{k},
    \end{displaymath}
    so that $\theta \in (1-\beta,1)$.
    By \cite[pages 28-29]{henry1981geometric}, it suffices to show that there exists $C > 0$ such that
    \begin{equation}
        \Vert (u,v) \Vert_{H^t \times H^s} \leq C \Vert \mathcal{L}_{2\mu A^\frac12} (u,v) \Vert_Y^{1-\theta} \Vert (u,v) \Vert_Y^{\theta} \qquad \text{for all } (u,v) \in \dom(\mathcal{L}_{2\mu A^\frac12}).
        \label{eq:AppendixBreq}
    \end{equation}
    We compute, using Sobolev interpolation \cite{brezis2018gagliardo} with $t= k \theta + 2k(1-\theta)$ and $s = k(1-\theta)$,
    \begin{align*}
        \Vert (u,v) \Vert_{H^t \times H^s} &\leq \sup \left\{ \Vert u \Vert_{H^{2k}}^{1-\theta} \Vert u \Vert_{H^k}^\theta ,  \Vert v \Vert_{H^k}^{1-\theta} \Vert v\Vert_{L^2}^{\theta} \right\} \\
        &\leq \Vert (u,v) \Vert^{1-\theta}_{H^{2k} \times H^k} \Vert (u,v) \Vert^{\theta}_{H^k \times L^2}, \qquad \quad (u,v) \in H^{2k}(\Omega) \times H^k(\Omega).
    \end{align*}
    For $(u,v) \in \dom(\mathcal{L}_{2\mu A^\frac12}) = \dom(A) \times \dom(A^\frac12)$ (contained in $H^{2k}(\Omega) \times H^k(\Omega)$ by assumption \eqref{eq:ellipticity_ass}), we moreover have
    \begin{displaymath}
         \Vert (u,v) \Vert_{H^k \times L^2} \lesssim \sup\{\Vert u \Vert_{A^\frac12},\Vert v\Vert_{L^2} \} \leq \Vert (u,v) \Vert_Y
    \end{displaymath}
    and
    \begin{align*}
          \Vert (u,v) \Vert_{H^{2k} \times H^k} &\lesssim  \sup\{\Vert u \Vert_{A},\Vert v\Vert_{A^\frac12} \} \\
          &\lesssim \sup\{\Vert Au \Vert_{L^2},\Vert A^\frac12 v\Vert_{L^2} \}\\
          &\leq \sup\{\Vert Au + 2\mu A^\frac12 v \Vert_{L^2} +  2 \mu\Vert A^\frac12 v\Vert_{L^2} ,\Vert A^\frac12 v\Vert_{L^2} \}\\
          & \lesssim \sup\{\Vert Au + 2\mu A^\frac12 v \Vert_{L^2} ,\Vert A^\frac12 v\Vert_{L^2} \} \\
          &\leq  \Vert \mathcal{L}_{2\mu A^\frac12} (u,v) \Vert_Y  ,
    \end{align*}
    by elliptic regularity \eqref{eq:ellipticity_ass}, upon noting that all graph norms $\Vert \cdot \Vert_{A^\alpha}$ appearing are equivalent to $\Vert A^\alpha \cdot \Vert_{X}$ by strict positivity of $A$.
    This shows \eqref{eq:AppendixBreq} over the desired domain.
\end{proof}

For the case of the beam example, Lemma~\ref{lemma:embedding} is used in Section~\ref{sect:beam_nonlin} with $k =2$, with elliptic regularity furnished by Corollary~\ref{cor:domA12} and the setup of $A$ in Section~\ref{sect:beam_A}.

\section{Bi-orthogonal system for \texorpdfstring{$\mathcal{L}_{2 \mu A^\frac12}$}{} associated to the fourth order differential operator on \texorpdfstring{$L^2([0,1])$}{}}
\label{appendix:biorthogonal}

We record the eigenvectors, adjoint eigenvectors and eigenvalues appearing in Section~\ref{sect:spectrum} for the case $\alpha=\frac12$ and $A = \partial_{xxxx}$ on $L^2([0,1])$, with boundary conditions as in \eqref{eq:BCs_beam} adjusted to $L^2([0,1])$.
The eigenvalues and eigenvectors (normalized according to \eqref{eq:phi_normal}) of $A$ are
\begin{displaymath}
    \sigma_n =  \left( n \pi \right)^4, \qquad \phi_n(x) = \frac{1}{(n\pi)^2}\sin\left( n \pi x \right), \qquad n \in \mathbb{N}.
\end{displaymath}
The eigenvalues of $\mathcal{L}_{2 \mu A^\frac12}$ are
\begin{displaymath}
    \lambda_n^\pm = \left(-\mu \pm i \sqrt{1-\mu^2} \right)  \left( n \pi\right)^2.
\end{displaymath}
The normalized eigenvectors of $\mathcal{L}_{2 \mu A^\frac12}$ are
\begin{displaymath}
    \psi_n^\pm(x) = \begin{pmatrix}
        \frac{1}{(n\pi)^2} \\
        -\mu \pm i \sqrt{1-\mu^2} 
    \end{pmatrix} \sin\left( n \pi x \right),
\end{displaymath}
the adjoint eigenvectors are
\begin{displaymath}
    \psi^{*-}_n(x) =\begin{pmatrix}
        \left( 1 + \frac{i\mu}{\sqrt{1-\mu^2}} \right)\frac{1}{(n\pi)^2} \\
        \frac{i} {\sqrt{1-\mu^2}}
    \end{pmatrix}  \sin\left( n \pi x \right), \qquad \psi^{*+}_n = \overline{\psi^{*-}_n}.
\end{displaymath}

\section{Bi-orthogonal system for \texorpdfstring{$\mathcal{L}_{2 \mu A^\frac12}$}{} associated to the fourth order differential operator on \texorpdfstring{$L^2([0,a] \times [0,b])$}{}}
\label{appendix:biorthogonal_plate}

We record the eigenvectors, adjoint eigenvectors and eigenvalues appearing in Section~\ref{sect:spectrum} for the case $\alpha=\frac12$ and $A = \Delta^2$ on $L^2([0,1])$, with boundary conditions as in \eqref{eq:plate_BCs}, i.e., the setup of Section~\ref{sect:plate}.
The eigenvalues and eigenvectors (normalized according to \eqref{eq:phi_normal}) of $A$ are
\begin{displaymath}
      \sigma_{nm} = \left[ \left(\frac{n\pi}{a}\right)^2 + \left(\frac{m\pi}{b}\right)^2\right]^2,
\qquad \phi_{nm}(x) = \sqrt{\frac{2}{ab \sigma_{nm}}} \sin\left(\frac{n\pi x}{a}\right)
\sin\left(\frac{m\pi y}{b}\right), 
\end{displaymath}
$n,m \in \mathbb{N}$.
The eigenvalues of $\mathcal{L}_{2 \mu A^\frac12}$ are
\begin{displaymath}
    \lambda_{nm}^\pm = \left(-\mu \pm i \sqrt{1-\mu^2} \right)  \left[ \left(\frac{n\pi}{a}\right)^2 + \left(\frac{m\pi}{b}\right)^2\right].
\end{displaymath}
The normalized eigenvectors of $\mathcal{L}_{2 \mu A^\frac12}$ are
\begin{displaymath}
    \psi_{nm}^\pm(x) = \begin{pmatrix}
       \sigma_{nm}^{-\frac12}  \\
        -\mu \pm i \sqrt{1-\mu^2} 
    \end{pmatrix} \sqrt{\frac{2}{ab }} \sin\left(\frac{n\pi x}{a}\right)
\sin\left(\frac{m\pi y}{b}\right),
\end{displaymath}
the adjoint eigenvectors are
\begin{displaymath}
    \psi^{*-}_{pq}(x) =\begin{pmatrix}
        \left( 1 + \frac{i\mu}{\sqrt{1-\mu^2}} \right)\sigma_{pq}^{-\frac12} \\
        \frac{i} {\sqrt{1-\mu^2}}
    \end{pmatrix}  \sqrt{\frac{2}{ab }} \sin\left(\frac{p \pi x}{a}\right)
\sin\left(\frac{q\pi y}{b}\right), \qquad \psi^{*+}_{pq} = \overline{\psi^{*-}_{pq}}.
\end{displaymath}

\printbibliography

\end{document}